\let\th@plain\relax \makeatother
\setlist[enumerate,1]{label={\roman*)}}
\tikzset{> =stealth}
\theoremstyle{plain}
\newtheorem{theorem}{Theorem}[section]
\newtheorem{lemma}[theorem]{Lemma}
\newtheorem{proposition}[theorem]{Proposition}
\newtheorem{corollary}[theorem]{Corollary}
\newtheorem{conjecture}[theorem]{Conjecture}
\theoremstyle{definition}
\newtheorem{definition}[theorem]{Definition}
\theoremstyle{remark}
\newtheorem{example}[theorem]{Example}
\newcommand{\mult}{\bigcdot}
\newcommand{\scalarMult}{\cdot}
\newcommand*{\bigcdot}{}
\DeclareRobustCommand*{\bigcdot}{%
  \mathbin{\mathpalette\bigcdot@{}}%
}
\newcommand*{\bigcdot@scalefactor}{.5}
\newcommand*{\bigcdot@widthfactor}{1.15}
\newcommand*{\bigcdot@}[2]{%
  \sbox0{$#1\vcenter{}$}%
  \sbox2{$#1\cdot\m@th$}%
  \hbox to \bigcdot@widthfactor\wd2{%
    \hfil
    \raise\ht0\hbox{%
      \scalebox{\bigcdot@scalefactor}{%
        \lower\ht0\hbox{$#1\bullet\m@th$}%
      }%
    }%
    \hfil
  }%
}
\renewcommand{\epsilon}{\varepsilon}
\renewcommand{\phi}{\varphi}
\mathchardef\mhyphen="2D
\renewcommand{\land}{\mathrel{\wedge}}
\renewcommand{\lor}{\mathrel{\vee}}
\newcommand{\R}{\mathbb{R}}
\renewcommand{\C}{\mathbb{C}}
\newcommand{\Srpnsk}{\mathbb{S}}
\newcommand{\lowerRealsNonneg}{\overrightarrow{\R}_{\!\ge 0}}
\newcommand{\id}{\mathrm{id}}
\newcommand{\Cvar}{\mathscr{C}}
\newcommand{\Set}{\mathbf{Set}}
\newcommand{\Sup}{\mathbf{Sup}}
\newcommand{\DLat}{\mathbf{DLat}}
\newcommand{\Frm}{\mathbf{Frm}}
\newcommand{\Loc}{\mathbf{Loc}}
\newcommand{\OFrm}{\mathbf{OFrm}}
\newcommand{\OLoc}{\mathbf{OLoc}}
\newcommand{\Quant}{{\mathbf{Quant}_\top}}
\newcommand{\QuantGeneral}{\mathbf{Quant}}
\newcommand{\Dcpo}{\mathbf{DCPO}}
\newcommand{\LocSRng}{\mathbf{LocCRig}}
\newcommand*{\Mod}[1]{{#1}\raisebox{0.1ex}{-\kern1.0pt}\mathbf{Mod}}
\newcommand{\CComon}{\mathrm{CComon}}
\newcommand{\op}{{^\mathrm{\hspace{0.5pt}op}}}
\newcommand*{\dirsup}[1][\hspace{0.75ex}]{\operatorname*{\bigvee{}^{\!\uparrow}_{\hspace{-0.2ex} #1}}}
\newcommand\twoheaduparrow{\rotatebox[origin=c]{90}{$\twoheadrightarrow$}}
\newcommand\twoheaddownarrow{\rotatebox[origin=c]{90}{$\twoheadleftarrow$}}
\renewcommand{\O}{\mathcal{O}}
\DeclareMathOperator{\Hom}{Hom}
\DeclareMathOperator{\Spec}{Spec}
\DeclareMathOperator{\Idl}{Idl}
\DeclareMathOperator{\Rad}{Rad}
\newcommand{\OPAI}{\mathrm{OPAI}}
\newcommand{\OPMAI}{\mathrm{OPMAI}}
\newcommand{\stabpos}{\pi}
\newcommand{\addquot}{\mathfrak{s}}
\newcommand{\rad}{\rho}
\newcommand{\Sats}{\mathscr{S}}
\newcommand{\MonIdl}{\mathscr{M}}
\newcommand{\doublePow}{\mathbb{P}}
\newcommand{\diam}{\mathrm{diam}}
\newcommand{\ev}{\mathsf{ev}}
\DeclarePairedDelimiter\abs{\lvert}{\rvert}
\DeclarePairedDelimiter\norm{\lVert}{\rVert}
\let\oldabs\abs
\def\abs{\@ifstar{\oldabs}{\oldabs*}}
\let\oldnorm\norm
\def\norm{\@ifstar{\oldnorm}{\oldnorm*}}
\tikzset{dot/.style={circle,draw=black,fill=black,minimum size=1mm,inner sep=0mm}}
\newcommand\fillbackground[3][black!10]{%
\begin{pgfonlayer}{background}
\fill[#1] (#2) rectangle (#3);
\end{pgfonlayer}
}
\title{The spectrum of a localic semiring}
\author[G. Manuell]{Graham Manuell}
\address{Centre for Mathematics\\ University of Coimbra}
\email{graham@manuell.me}
\date{December 2021}
\subjclass[2010]{54B35, 06D22, 54H13, 13J99, 54B30, 13A15, 06F07, 03F65}
\keywords{spectra, frame, quantale, Zariski topology, closed prime ideal}
\begin{document}

\begin{abstract}
A number of spectrum constructions have been devised to extract topological spaces from algebraic data.
Prominent examples include the Zariski spectrum of a commutative ring, the Stone spectrum of a bounded distributive lattice,
the Gelfand spectrum of a commutative unital C*-algebra and the Hofmann--Lawson spectrum of a continuous frame.

Inspired by the examples above, we define a spectrum for localic semirings. We use arguments in the symmetric monoidal category of suplattices
to prove that, under conditions satisfied by the aforementioned examples, the spectrum can be constructed as
the frame of overt weakly closed radical ideals and that it reduces to the usual constructions in those cases. Our proofs are constructive.

Our approach actually gives `quantalic' spectrum from which the more familiar localic spectrum can then be derived. For a discrete ring this yields the quantale of ideals
and in general should contain additional `differential' information about the semiring.
\end{abstract}

\maketitle
\thispagestyle{empty}

\setcounter{section}{-1}
\section{Introduction}

A spectrum constructions is a way to assign topological spaces to certain algebraic structures. The original example is Stone's spectrum of a Boolean algebra \cite{StoneBoolAlgs} or
general (bounded) distributive lattice \cite{StoneDistLattices}. Other important spectra include the so-called Zariski spectrum of a commutative ring (essentially introduced by Jacobson in \cite{JacobsonPrimitive}),
Gelfand's spectrum of a commutative unital C*-algebra \cite{GelfandNormedSpectrum}
and Hofmann and Lawson's spectrum of a distributive continuous lattice \cite{hofmann1978spectral}.

The above spectra have a number of striking similarities. In particular, the points of the Stone and Zariski spectra both correspond to prime ideals of the semirings in question.
The points of the Gelfand and Hofmann--Lawson spectra might at first appear to be slightly different: in the former case they are usually described as the \emph{maximal} ideals
and in the latter case as the prime \emph{elements}.
However, in these cases the semirings come equipped with a natural topology which we should not ignore. The maximal ideals of a C*-algebra coincide with the closed prime ideals with respect to the norm topology and the prime elements of a continuous distributive lattice correspond to closed prime ideals with respect to the Scott topology.
Furthermore, for all of these cases, the topology on the spectrum is given by the ``hull-kernel topology'' and the opens are in bijection with the closed radical ideals.

Further evidence for the depth of these relationships is that remain true when we work constructively.
In this setting it is better use locales instead of topological spaces.
The algebraic structures in all of our examples can be considered to be \emph{localic semirings}.
While the points of the Zariski spectrum are now prime anti-ideals instead of prime ideals, their complementary sublocales are closed prime ideals as before.
In summary we obtain the following table.

\begin{table}[H]
\begin{tabular}{>{\raggedright\arraybackslash}m{38mm}>{\raggedright\arraybackslash}m{23mm}>{\raggedright\arraybackslash}m{35mm}>{\raggedright\arraybackslash}m{35mm}}
  \toprule
  \textbf{Class of semiring} & \textbf{Spectrum} & \textbf{Opens} & \textbf{Points} \\
  \midrule
  Commutative rings & Zariski & Radical ideals & Prime anti-ideals \\
  \rowcolor{gray!20}
  Distributive lattices & Stone & Ideals & Prime filters \\
  Comm.\ C*-algebras & Gelfand & Overt weakly closed ideals \cite{ConstructiveGelfandNonunital} & Closed prime ideals \\
  \rowcolor{gray!20}
  Continuous frames & Hofmann--Lawson & Scott-closed ideals & Scott-open prime filters \\
  \bottomrule
\end{tabular}
\end{table}

In each case the opens correspond to `overt weakly closed radical ideals' and the points are given by closed prime ideals
(or equivalently open prime anti-ideals). We will define a general notion of spectrum of a localic semiring as a classifying locale for closed prime ideals
and the frame of overt weakly closed radical ideals and prove that these coincide under assumptions satisfied by our examples.

The intuition behind considering the closed prime ideals is that when viewing the semiring as consisting of functions on a putative spectrum,
the closed prime ideals correspond to the places where these functions are zero --- at least if we require that being nonzero is an open condition.

Our approach will make use of the quantale of (overt, weakly closed) ideals. We use this to construct the frame of radical ideals, but it is also of interest in its own right.
At least in the discrete case, this quantale contains additional `differential' information, which can capture some of the ideas around `repeated roots' and singularities in algebraic geometry.

A number of the results of this paper, further intuition and a significant amount of additional background material can be found in my PhD thesis \cite{ManuellThesis}.

\subsection*{Outline}

After covering some background material, we will define a generalised notion of presentation of a frame. This will allow us to extend the usual presentation of the Zariski spectrum to general localic semirings.
However, the frame defined by such a generalised presentation might not exist in general. It will be useful to ensure it exists under certain conditions and to provide an explicit construction in this case.

In fact, we define a more general kind of spectrum, giving a quantale instead of just a frame. This generalises the quantale of ideals of a ring and can be used to recover the localic spectrum above.

Before tackling the semiring spectrum, we consider the simpler case of the spectrum of a commutative localic monoid. We define the notions of saturated opens and overt weakly closed monoid ideals
and prove that under certain assumptions they form dual objects in the monoidal category of suplattices. We can use this result to prove that quantale of monoid ideals is the quantic monoid spectrum.

We then modify this monoid result to solve the semiring case. We make use of the duality between saturated opens and monoid ideals, but need to modify the additive structure to make it compatible with these.
We show that the quantic spectrum is the quantale of overt weakly closed (semiring) ideals and the localic spectrum is the frame of radical ideals.
Finally, we show that all of our core examples are special cases of our main result.
We conclude with some unusual examples and suggest how we might prove the spectrum does not always exist.

\section{Background}

The arguments in this paper are constructive in the sense that they hold internal to any elementary topos with natural numbers object.

Recall that a \emph{semiring} (or \emph{rig}) is a set equipped with an `additive' commutative monoid structure and a `multiplicative' monoid structure such that
the multiplication is bilinear with respect to the additive structure. All semirings in this paper are assumed to be commutative.

\subsection{Suplattices, quantales and frames}

A \emph{suplattice} is a poset admitting arbitrary joins. We write $0$ for the least element of a suplattice and $\top$ for the largest element.
As objects, suplattices are the same as complete lattices, but morphisms of suplattices need only preserve joins.
We denote the category of suplattice by $\Sup$.

The category of suplattices is symmetric monoidal closed with the tensor product of suplattices defined similarly to abelian groups as the codomain of the the initial bilinear map $L \times M \to L \otimes M$.
The unit is given by the lattice of truth values $\Omega$. The internal hom is the suplattice $\hom(L, M)$ of suplattice homomorphisms from $L$ to $M$ with the pointwise order.
In this paper $\hom(-,-)$ will always refer to the internal hom in $\Sup$.

A \emph{quantale} is a monoid in $\Sup$. In this paper we adopt the convention that every quantale is commutative. We write $\QuantGeneral$ for the category of (commutative) quantales.

Finite coproducts in $\QuantGeneral$ are given by tensor products of their underlying suplattices and the obvious multiplication.
The suplattice $\Omega$ becomes the initial quantale with the multiplication given by the isomorphism $\Delta\colon \Omega \otimes \Omega \xrightarrow{\sim} \Omega$ sending $p \otimes q$ to $p \wedge q$.
The unique quantale map ${!}\colon \Omega \to Q$ induces a `quantale module' structure on $Q$ giving a scalar multiplication $p \scalarMult q = {!}(p) \mult q = \bigvee \{q \mid p\}$.\footnote{Here $\{q \mid p\}$ is a subsingleton which contains the element $q$ if and only if $p$ holds.}

Most of our quantales will be \emph{two-sided} --- that is, the unit $1$ coincides with
the largest element $\top$, or equivalently, $xy \le x, y$ for all $x,y$. The category $\Quant$ of two-sided quantales is reflective and coreflective in $\QuantGeneral$,
with the reflection of a quantale given by its largest two-sided quotient. This quotient can be obtained as the fixed points of the closure operator $a \mapsto a \top$.
This closure operator is an example of a \emph{nucleus}, which can be used to describe quotient quantales more generally.

A \emph{frame} is precisely a two-sided quantale with idempotent multiplication. In this case, the multiplication is given by meet. The category of frames is denoted $\Frm$
and is both reflective and coreflective in $\Quant$. This reflection is called the \emph{localic reflection} and the unit is a quotient map as above.

For more background on suplattices and quantales see \cite{galoisTheoryGrothendieck} and \cite{RosenthalQuantales}.

\subsection{Frames and locales}

Frames can be understood as generalised lattices of open sets and are more amenable to a constructive treatment than topological spaces.
A continuous map of topological spaces induces a morphism between the frames of open sets in the opposite direction and so when we wish to emphasise
the spatial intuition, we will work with the category $\Loc = \Frm\op$. To avoid any confusion between these categories,
we will explicitly write $\O X$ for the frame corresponding to a locale $X$ and $f^*$ for the frame homomorphism corresponding to a locale map $f$.

We say a locale corresponding to a topological space is \emph{spatial}. In good cases this space may be recovered from the locale with its points being given by the locale maps from the terminal locale $1$.
The free frame on one generator is spatial and its corresponding topological space is given by the set of truth values $\Omega$ equipped with the topology generated by the subbasic open $\{\top\}$.
This is the \emph{Sierpiński space} $\Srpnsk$ and should not be confused with the terminal locale, which has $\Omega$ as its frame of opens.
A \emph{discrete locale} is a locale corresponding to a discrete topological space $X$. Its opens are given by the subsets of $X$.

The pointfree analogue of subspaces are called \emph{sublocales} and correspond to quotient frames. Elements of a frame give rise to \emph{open sublocales}. Unlike subsets in constructive mathematics,
open sublocales always have complements, which are called \emph{closed sublocales}.

The category of frames is order-enriched with the obvious pointwise order on morphisms and this induces a similar enrichment on $\Loc$. This is related to the specialisation
preorder of topological spaces.

We now describe some particular aspects of pointfree topology which will be of use to us. For more information see \cite{PicadoPultr} and \cite[Part C]{Elephant2}.

A \emph{localic monoid} is a monoid object in the category of locales or equivalently, a comonoid object in the monoidal category of frames where the tensor product is given by the coproduct.
If $M$ is a localic monoid, we call the corresponding comultiplication map $\mu_\times\colon \O M \to \O M \oplus \O M$ and the counit map $\epsilon_1\colon \O M \to \Omega$.
Localic monoids can be thought of as being like topological monoids, with the points corresponding to the `elements of the monoid', but as is always the case with pointfree topology, the points do not tell the whole story. On the other hand, note that the opens do not have the structure of a monoid, but of a comonoid.

A \emph{localic semiring} is a semiring object in $\Loc$.
Given a localic semiring $R$, we write $\epsilon_0, \epsilon_1\colon \O R \to \Omega$ for the frame homomorphisms corresponding to the additive and multiplicative unit respectively.
We write $\mu_+, \mu_\times\colon \O R \to \O R\oplus \O R$ for the frame homomorphisms corresponding to addition and multiplication.
The category of localic semirings will be called $\LocSRng$.

In a constructive setting, it is more useful to know that a set is \emph{inhabited} (i.e. it contains an element), instead of simply nonempty.
Similarly, there is a stronger version of nontriviality for frames.
\begin{definition}
 An element $a$ of a frame $L$ is said to be \emph{positive} if whenever $\bigvee I \ge a$ then $I$ is inhabited.
 We say $L$ is positive if $1 \in L$ is positive.
\end{definition}

This leads to a concept which is invisible in the classical setting.

\begin{definition}
 We say a frame is \emph{overt}\footnote{Some sources call such frames \emph{open}, but this leads to ambiguity when discussing sublocales.}
 or \emph{locally positive} if it has a base of positive elements.
\end{definition}

Under the assumption of excluded middle, every locale is overt.
More generally, overtness has strong links to openness.

\begin{lemma}
 A locale $X$ is overt if and only if the unique locale map ${!}\colon X \to 1$ is open
 if and only if the product projection $\pi_1\colon Y \times X \to Y$ is open for all $Y$.
 
 In this case, the frame homomorphism ${!}\colon \Omega \to \O X$ has a left adjoint $\exists\colon \O X \to \Omega$
 such that $\exists(a) = \top$ if and only if $a$ is positive.
 The left adjoint $(\iota_1)_!$ of the coproduct injection $\iota_1\colon \O Y \to \O Y \oplus \O X$ is given by $\O Y \otimes \exists$ (up to composition with $\O Y \otimes \Omega \xrightarrow{\sim} \O Y$).
\end{lemma}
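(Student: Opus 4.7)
The plan is to establish the chain of equivalences and then the formula in three stages: first ``$X$ overt'' $\Leftrightarrow$ ``${!}\colon X \to 1$ is an open locale map'' (which simultaneously yields the characterisation of $\exists$), then the equivalence with openness of all projections $\pi_1$, and finally the formula for $(\iota_1)_!$.

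For the first equivalence I would begin with the forward direction: fixing a base $B$ of positive elements of $\O X$, define $\exists\colon \O X \to \Omega$ by sending $a$ to the truth value of ``some $b \in B$ lies below $a$,'' which is manifestly a suplattice homomorphism. To check $\exists \dashv {!}^*$, use that for positive $b$ the inclusion $b \le {!}^*(p) = \bigvee\{1 : p\}$ forces $p$ to hold; expanding $a$ via the base then converts $a \le {!}^*(p)$ into exactly the condition $\exists(a) \le p$. Frobenius reciprocity follows from $a \wedge {!}^*(p) = \bigvee\{a : p\}$ together with the matching identity for subsingleton joins in $\Omega$. The characterisation $\exists(a) = \top \Leftrightarrow a$ is positive is then immediate: positivity passes upwards, and the base cover $\{b \in B : b \le a\}$ of a positive $a$ must be inhabited. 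For the converse direction, given $\exists \dashv {!}^*$ satisfying Frobenius, the unit inequality $a \le {!}^*(\exists(a))$ combined with ${!}^*(p) = \bigvee\{1 : p\}$ gives $a = a \wedge {!}^*(\exists(a)) = \bigvee\{a : \exists(a) = \top\}$, displaying $a$ as a join of elements each positive whenever its indexing witness exists; this is the base of positives in the internal sense.

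For the second equivalence I would invoke stability of open locale maps under pullback: $\pi_1\colon Y \times X \to Y$ is the pullback of $!\colon X \to 1$ along $!\colon Y \to 1$, so openness of the latter implies openness of $\pi_1$, and the converse is immediate on taking $Y = 1$. For the formula for $(\iota_1)_!$, the crucial observation is that the frame coproduct $\O Y \oplus \O X$ coincides with the suplattice tensor $\O Y \otimes \O X$ (equipped with the bilinear meet inherited from the frame structures), and under this identification $\iota_1$ factors as $\O Y \xrightarrow{\sim} \O Y \otimes \Omega \xrightarrow{\O Y \otimes {!}^*} \O Y \otimes \O X$. Tensoring the adjunction $\exists \dashv {!}^*$ with $\O Y$---which is legitimate because $\O Y \otimes -$ is a left adjoint in the symmetric monoidal closed category $\Sup$---produces $\O Y \otimes \exists \dashv \O Y \otimes {!}^*$; composing left adjoints then gives the claimed formula.

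The main obstacle will be the ``${!}\colon X \to 1$ open $\Rightarrow$ $X$ overt'' step. The identity $a = \bigvee\{a : \exists(a) = \top\}$ furnishes the base of positives only in an internal, conditional sense---each ``basic element'' is $a$ itself, positive precisely when the indexing witness exists---so reconciling this with the set-theoretic phrasing of a ``base'' requires working carefully in the internal logic. A secondary but routine technicality is justifying the identification of the frame coproduct with the suplattice tensor, which underpins the formula for $(\iota_1)_!$.
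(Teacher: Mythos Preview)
The paper does not prove this lemma: it sits in the background section as a stated fact, with an implicit pointer to standard sources (Picado--Pultr and the Elephant) for details. There is therefore no paper proof to compare against, and your outline is the standard argument one finds in those references.

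Your argument is essentially correct. Two small remarks. First, the justification you give for tensoring the adjunction $\exists \dashv {!}^*$ with $\O Y$ is not quite the right one: that $\O Y \otimes (-)$ is a left adjoint tells you it preserves colimits, not adjunctions. What you need is that $\Sup$ is order-enriched and $\O Y \otimes (-)$ is monotone on hom-posets (check it on generators $\ell \otimes a$), so it is a $\Pos$-enriched functor and hence carries adjunctions to adjunctions. Second, your worry about the ``open $\Rightarrow$ overt'' step is overstated: once you know $\exists(a) = \top$ implies $a$ positive, the subsingleton $\{a : \exists(a)\}$ is literally a subset of $\{b \in \O X : b \text{ positive}\}$, so the identity $a = \bigvee\{a : \exists(a)\}$ already exhibits $a$ as a join of positive elements in the ordinary sense, with no further internal-logic gymnastics required. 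The identification of the frame coproduct with the suplattice tensor is the Joyal--Tierney result the paper itself cites, so that step is on firm ground.
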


We denote the full subcategories of overt frames and overt locales by $\OFrm$ and $\OLoc$ respectively.
It can be shown that every spatial locale is overt and the category of overt locales is closed under finite products in $\Loc$.
In particular, this implies that overt localic monoids or semirings coincide with monoids or semirings in $\OLoc$.

Overt locales should be thought of as locales that we can `existentially quantify' over. For example, if we consider $a \in \O X$ as a proposition,
the map $\exists\colon \O X \to \Omega$ tells us whether `there is something' satisfying $a$. In fact, this intuition can be made precise.
There is an obvious functor $\OLoc\op \to \Frm \to \DLat$, where $\DLat$ is the category of distributive lattices.
This is a \emph{coherent hyperdoctrine without equality} which yields an internal logic for manipulating open propositions on overt locales, admitting disjunction, conjunction and existential quantification. (See \cite{PittsLogic} for details about logic in coherent hyperdoctrines.)

Finally, in constructive mathematics, the notion of closedness bifurcates to give closed sublocales and \emph{weakly closed} (or \emph{fibrewise closed}) sublocales.
The \emph{overt} weakly closed sublocales of $X$ are in bijection with the suplattice homomorphisms from $\O X$ to $\Omega$ (see \cite{BungeFunkLowerPowerlocale} for details).
Given an overt sublocale $V$ of $X$, we can ask if an open $a \in \O X$ restricts to a positive open in $V$. In this case, we say $V$ \emph{meets} $a$ and write $V \between a$.
This defines a suplattice homomorphism from $\O X$ into $\Omega$, from which the weak closure of $V$ may be recovered.
The overt weakly closed sublocales of a discrete locale are precisely the opens (i.e. the subsets) and here we recover the notation $S \between T$ for subsets $S$ and $T$ having inhabited intersection.

If $f\colon X \to Y$ is a locale map, then $\hom(f^*, \Omega)$ induces a map from overt weakly closed sublocales of $X$ to overt weakly closed sublocales of $Y$
which can be interpreted as taking the weak closure of the image. More generally, if $V$ is an overt sublocale of $X$, the image $V'$ of $V$ is overt and
$V' \between a \iff V \between f^*(a)$.

\subsection{Dcpos}

Recall that a poset $D$ is called \emph{directed} if every finitely-indexed subset $F \subseteq D$ has an upper bound in $D$.
A \emph{dcpo} is a poset which admits joins of its directed subsets. A map of dcpos is a monotone function which preserves these directed joins and we denote the category of dcpos by $\Dcpo$.

A dcpo comes with a natural topology, the \emph{Scott topology}, consisting of the upsets $U$ satisfying $\bigvee D \in U \implies D \between U$ for all directed sets $D$.
Morphisms of dcpos are continuous with respect to this topology.

Another important class of subsets is given by the \emph{Scott-closed sets}, which are downsets closed under directed joins.
These are not simply the complements of Scott-open sets; we will explain their relationship to the Scott topology in \cref{prop:scott_closed_weakly_closed}.

The \emph{way-below} relation on a dcpo $P$ is defined so that $a \ll b$ if whenever $D \subseteq P$ is directed and $b \le \bigvee D$ then there exists some $d \in D$ such that $a \le d$.
We write $\twoheaddownarrow b$ for the set $\{ a \in P \mid a \ll b \}$ and $\twoheaduparrow a$ for the set $\{ b \in P \mid a \ll b \}$.

\begin{definition}
 A dcpo $P$ is \emph{continuous} if for all $b \in P$, the set $\twoheaddownarrow b$ is directed and has $b$ as its supremum.
\end{definition}

\begin{lemma}\label{prop:base_scott_top_continuous_dcpo}
 In a continuous dcpo, the sets of the form $\twoheaduparrow a$ form a base for the Scott topology.
\end{lemma}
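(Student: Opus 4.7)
The plan is to show two things: each set $\twoheaduparrow a$ is Scott-open, and every Scott-open neighbourhood of a point contains such a set which also contains that point. The key technical ingredient is the \emph{interpolation property} of the way-below relation: in a continuous dcpo, whenever $a \ll c$ there exists $b$ with $a \ll b \ll c$. I would state and prove this as a preliminary lemma, since it is both the crucial tool and the main obstacle.

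To establish interpolation, I would consider the set $D = \bigcup_{d \ll c} \twoheaddownarrow d$. Using that $\twoheaddownarrow c$ is directed (and each $\twoheaddownarrow d$ for $d \ll c$ is directed) one checks $D$ itself is directed: given $b_1 \ll d_1 \ll c$ and $b_2 \ll d_2 \ll c$, directedness of $\twoheaddownarrow c$ supplies $d_3 \ll c$ above $d_1, d_2$, so $b_1, b_2 \ll d_3$, and then directedness of $\twoheaddownarrow d_3$ provides a common upper bound $e \in D$. Since $\bigvee D = \bigvee_{d \ll c} d = \bigvee \twoheaddownarrow c = c$, the assumption $a \ll c$ yields some $e \in D$ with $a \le e$, and by definition of $D$ there exists $b$ with $e \ll b \ll c$, giving $a \ll b \ll c$ as required.

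With interpolation in hand, Scott-openness of $\twoheaduparrow a$ is straightforward. Upward closure is immediate from the fact that $a \ll b \le c$ implies $a \ll c$. For preservation under directed suprema, suppose $a \ll \bigvee D$ for some directed $D$; by interpolation there is $b$ with $a \ll b \ll \bigvee D$, so by the definition of $\ll$ some $d \in D$ satisfies $b \le d$, whence $a \ll d$ and $d \in \twoheaduparrow a$.

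For the base property, let $U$ be Scott-open and $x \in U$. By continuity $x = \dirsup \twoheaddownarrow x$, so the Scott-openness of $U$ gives $\twoheaddownarrow x \between U$, yielding some $a \in U$ with $a \ll x$. Then $x \in \twoheaduparrow a$, and since $U$ is an upset containing $a$ and $a \ll b$ implies $a \le b$, we have $\twoheaduparrow a \subseteq U$. The interpolation argument is the only nontrivial step; once it is available, both directions are essentially unpacking definitions.
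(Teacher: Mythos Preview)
The paper states this lemma as background material without proof, so there is no in-paper argument to compare against. Your proof is correct and is the standard one: you correctly identify interpolation as the essential ingredient, your proof of interpolation via the directed set $D = \bigcup_{d \ll c}\twoheaddownarrow d$ is sound (and constructive, in keeping with the paper's conventions), and the two consequences --- Scott-openness of $\twoheaduparrow a$ and the covering of an arbitrary Scott-open $U$ by such sets --- follow exactly as you describe. One small remark: in the Scott-openness step you are right that interpolation is genuinely needed, since the bare definition of $a \ll \bigvee D$ only yields $a \le d$ for some $d\in D$, not $a \ll d$.
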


\begin{lemma}\label{prop:scott_closure_continuous_dcpo}
 The Scott-closure of a downset $S$ in a continuous dcpo is given by the set of directed joins of elements of $S$.
 (That is, the closure is obtained in a single step.)
\end{lemma}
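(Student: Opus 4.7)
The plan is to define $\bar{S}$ to be the set of all directed joins of elements of $S$ and verify that this is exactly the Scott-closure of $S$. One direction is immediate: $S \subseteq \bar{S}$ (each $s \in S$ is the directed join of $\{s\}$), and any Scott-closed set containing $S$ is closed under directed joins, so it must contain $\bar S$. All the work lies in showing that $\bar{S}$ itself is Scott-closed, i.e.\ a downset and closed under directed joins.

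To show $\bar{S}$ is a downset, I would take $y \le x$ with $x = \bigvee D_x \in \bar{S}$ for some directed $D_x \subseteq S$. By continuity, $y = \dirsup \twoheaddownarrow y$. For each $a \ll y$ we have $a \ll y \le x = \bigvee D_x$ with $D_x$ directed, so $a \le d$ for some $d \in D_x \subseteq S$; since $S$ is a downset, $a \in S$. Hence $\twoheaddownarrow y \subseteq S$ is directed with join $y$, witnessing $y \in \bar{S}$.

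For closure under directed joins, suppose $D' \subseteq \bar{S}$ is directed with $y = \bigvee D'$. I want to repeat the idea, but now a witness $a \ll y$ only gives some $x \in D'$ with $a \le x$, not $a \ll x$. The key step here — and the main subtlety of the proof — is to invoke the interpolation property of $\ll$ in a continuous dcpo: there exists $b$ with $a \ll b \ll y$. Then some $x \in D'$ satisfies $b \le x$, so $a \ll x$; writing $x = \bigvee D_x$ for a directed $D_x \subseteq S$ yields $d \in D_x$ with $a \le d$, and downwards closure of $S$ forces $a \in S$. Thus again $\twoheaddownarrow y \subseteq S$ is directed with join $y$, so $y \in \bar S$.

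The main obstacle is the directed-join closure step, since the naive argument fails: $a \ll y$ and $x \le y$ do not give $a \ll x$. Everything hinges on interpolation, which is the nontrivial continuous-dcpo fact being used. After that the two closure properties are symmetric and both reduce to the same observation that in a continuous dcpo containing a downset $S$, every element of $\bar S$ is in fact the directed join of the elements of $S$ way-below it.
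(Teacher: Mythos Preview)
The paper states this lemma as a background fact without proof, so there is no proof in the paper to compare against. Your argument is correct and is the standard one: you show that the set $\bar S$ of directed joins from $S$ is itself Scott-closed by proving it is a downset and closed under directed joins, both times reducing to the observation that $\twoheaddownarrow y \subseteq S$ and invoking continuity to write $y = \dirsup \twoheaddownarrow y$. Your identification of interpolation as the crucial ingredient in the directed-join step is exactly right; without it one only obtains $a \le x$ rather than $a \ll x$, which is not enough to push $a$ down into $S$.
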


A locale $X$ is exponentiable if and only if $\O X$ is continuous as a dcpo. We say such a locale is \emph{locally compact}.
In this case, $\Srpnsk^X$ is a spatial locale whose corresponding topological space is $\O X$ with the Scott topology.

\subsection{Dualisable suplattices}

An object $A$ in a monoidal category $\Cvar$ is (left) \emph{dualisable} if it is a (left) adjoint when $\Cvar$ is viewed as a one-object bicategory. Its right adjoint is called the (right) \emph{dual} of $A$
and is written $A^*$.

Explicitly, $A$ has a (right) dual $A^*$ if there exist maps $\eta\colon I \to A^* \otimes A$ and $\epsilon\colon A \otimes A^* \to I$
such that the following diagrams commute.
\begin{center}
 \begin{minipage}{.45\textwidth}
  \centering
  \begin{tikzpicture}[node distance=3.5cm, auto]
    \node (A) {$A \otimes (A^* \otimes A)$};
    \node (B) [right of=A] {$(A \otimes A^*) \otimes A$};
    \node (C) [below=3cm of A] {$A \otimes I$};
    \node (D) [right of=C] {$I \otimes A$};
    \path (C) -- node[anchor=center] (mid) {$A$} (D);
    \draw[->] (A) to node {$\sim$} (B);
    \draw[->] (C) to node {$A \otimes \eta$} (A);
    \draw[->] (C) to node {$\sim$} (mid);
    \draw[<-] (mid) to node {$\sim$} (D);
    \draw[->] (B) to node {$\epsilon \otimes A$} (D);
  \end{tikzpicture}
 \end{minipage}
 \begin{minipage}{.45\textwidth}
  \centering
  \begin{tikzpicture}[node distance=3.5cm, auto]
    \node (A) {$(A^* \otimes A) \otimes A^*$};
    \node (B) [right of=A] {$A^* \otimes (A \otimes A^*)$};
    \node (C) [below=3cm of A] {$I \otimes A^*$};
    \node (D) [right of=C] {$A^* \otimes I$};
    \path (C) -- node[anchor=center] (mid) {$A^*$} (D);
    \draw[->] (A) to node {$\sim$} (B);
    \draw[->] (C) to node {$\eta \otimes A^*$} (A);
    \draw[->] (C) to node {$\sim$} (mid);
    \draw[<-] (mid) to node {$\sim$} (D);
    \draw[->] (B) to node {$A^* \otimes \epsilon$} (D);
  \end{tikzpicture}
 \end{minipage}
\end{center}
Here $I$ is the unit of the monoidal category and the unnamed isomorphisms are the associator (or its inverse) and the left and right unitors as appropriate.

These conditions can be expressed more evocatively in the language of string diagrams (see \cite{marsden2014category,selinger2010survey}),
which we lay out vertically from bottom to top.
\begin{center}
\begin{minipage}{0.49\textwidth}
\begin{equation*}
\begin{tikzpicture}[scale=0.5,baseline={([yshift=-0.5ex]current bounding box.center)}]
\path coordinate[dot, label=below:$\epsilon$] (epsilon) ++(1,-1) coordinate (a) ++(1,-1) coordinate[dot, label=above:$\eta$] (eta)
 ++(1,1) coordinate (b) ++(0,2) coordinate[label=above:$A$] (tr)
 (epsilon) ++(-1,-1) coordinate (c) ++(0,-2) coordinate[label=below:$A$] (bl);
\draw (bl) -- (c) to[out=90, in=180] (epsilon) to[out=0, in=90] (a) to[out=-90, in=180] (eta) to[out=0, in=-90] (b) -- (tr);
\fillbackground{$(bl) + (-0.5,0)$}{$(tr) + (0.5,0)$};
\end{tikzpicture}
\enspace=\enspace
\begin{tikzpicture}[scale=0.5,baseline={([yshift=-0.5ex]current bounding box.center)}]
\path coordinate[label=below:$A$] (b) ++(0,4) coordinate[label=above:$A$] (t);
\draw (b) -- (t);
\fillbackground{$(t) + (-1,0)$}{$(b) + (1,0)$};
\end{tikzpicture}
\end{equation*}
\end{minipage}
\begin{minipage}{0.49\textwidth}
\begin{equation*}
\begin{tikzpicture}[scale=0.5,baseline={([yshift=-0.5ex]current bounding box.center)}]
\path coordinate[dot, label=above:$\eta$] (eta) ++(1,1) coordinate (a) ++(1,1) coordinate[dot, label=below:$\epsilon$] (epsilon)
 ++(1,-1) coordinate (b) ++(0,-2) coordinate[label=below:$A^*$] (br)
 (eta) ++(-1,1) coordinate (c) ++(0,2) coordinate[label=above:$A^*$] (tl);
\draw (tl) -- (c) to[out=-90, in=180] (eta) to[out=0, in=-90] (a) to[out=90, in=180] (epsilon) to[out=0, in=90] (b) -- (br);
\fillbackground{$(tl) + (-0.5,0)$}{$(br) + (0.5,0)$};
\end{tikzpicture}
\,=\enspace
\begin{tikzpicture}[scale=0.5,baseline={([yshift=-0.5ex]current bounding box.center)}]
\path coordinate[label=below:$A^*$] (b) ++(0,4) coordinate[label=above:$A^*$] (t);
\draw (b) -- (t);
\fillbackground{$(t) + (-1,0)$}{$(b) + (1,0)$};
\end{tikzpicture}
\end{equation*}
\end{minipage}
\end{center}
So the maps $\epsilon$ and $\eta$ allow us to `turn corners' in string diagrams and the identities can be thought of as saying we can `pull the wires straight'.
We will usually suppress the dots and labels for $\epsilon$ and $\eta$ in these situations as they can be readily understood from context.

In a symmetric monoidal category left and right dualisability are equivalent and so we simply call such an object \emph{dualisable}.

If the category $\Cvar$ is monoidal closed, then $A \otimes (-) \dashv \hom(A, -)$. So by uniqueness of adjoints, $A^* \otimes (-) \cong \hom(A,-)$ whenever $A^*$ exists.
In particular, $A^* \cong A^* \otimes I \cong \hom(A,I)$. Moreover, if we take $A^* = \hom(A,I)$, the counit $\epsilon\colon A \otimes A^* \to I$ is given by the (flipped)
evaluation map $A \otimes \hom(A,I) \to I$.

In $\Sup$ the dualisable objects are related to the notion of a \emph{dual basis}.

\begin{definition}
 Let $L$ be a suplattice. We say that a family $(r_x)_{x \in X}$ of elements of $L$ and a family $(\sigma_x)_{x \in X}$ of elements of $\hom(L, \Omega)$ form a \emph{dual basis} for $L$
 if $a = \bigvee_{x \in X} \sigma_x(a) \scalarMult r_x$ for all $a \in L$.
\end{definition}

The following modification of continuity is also of interest.
\begin{definition}
 We define the totally below relation on a suplattice by setting $a \lll b$ if whenever $\bigvee S \ge b$ then there exists some $s \in S$ such that $s \ge a$.
 We say a suplattice is \emph{supercontinuous} if every element is the join of the elements totally below it.
\end{definition}

We now have the following lemma (see \cite[Section 1.6]{ManuellThesis}).
\begin{lemma}\label{prop:supercontinuous_vs_dualisable}
 Let $L$ be a suplattice. The following conditions are equivalent:
 \begin{enumerate}
  \item $L$ admits a dual basis,
  \item $L$ is dualisable,
  \item $L$ is supercontinuous.
\end{enumerate}
\end{lemma}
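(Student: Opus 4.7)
The plan is to prove (i) $\Leftrightarrow$ (ii) and (i) $\Leftrightarrow$ (iii), both mediated through (i).

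For (i) $\Leftrightarrow$ (ii), I would use that $\Sup$ is monoidal closed, so any dual of $L$ must be $\hom(L,\Omega)$ with counit $\epsilon$ the (flipped) evaluation, and a unit $\eta\colon \Omega \to \hom(L,\Omega) \otimes L$ is then determined by $\eta(\top)$. Writing this as a join of pure tensors $\bigvee_{x \in X} \sigma_x \otimes r_x$ (every element of a suplattice tensor product admits such a presentation), the first triangle identity evaluated at $a \in L$ unwinds to $a = \bigvee_x \sigma_x(a) \scalarMult r_x$, precisely the dual basis property. Conversely, given a dual basis, setting $\eta(\top) := \bigvee_x \sigma_x \otimes r_x$ yields a unit whose first triangle identity is the dual basis identity itself, and whose second triangle identity evaluated at $\phi \in L^*$ reduces to $\phi(a) = \bigvee_x \phi(r_x) \wedge \sigma_x(a)$, which follows from applying $\phi$ to the dual basis identity for $a$.

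For (i) $\Rightarrow$ (iii), I would show that whenever $\sigma_x(b) = \top$, the element $r_x$ is totally below $b$: if $b \le \bigvee S$, then $\top = \sigma_x(b) \le \bigvee_{s \in S} \sigma_x(s)$ in $\Omega$, forcing some $s \in S$ to satisfy $\sigma_x(s) = \top$, whence $s = \bigvee_y \sigma_y(s) \scalarMult r_y \ge r_x$. The decomposition $b = \bigvee_x \sigma_x(b) \scalarMult r_x = \bigvee\{r_x : \sigma_x(b) = \top\}$ then exhibits $b$ as a join of elements totally below it, so $L$ is supercontinuous.

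For (iii) $\Rightarrow$ (i), the crucial preliminary step is an interpolation property: $a \lll b$ implies $a \lll c \lll b$ for some $c$. I would derive this by applying supercontinuity twice to present $b$ as $\bigvee\{u : u \lll c \text{ for some } c \lll b\}$; then $a \lll b$ applied to this join yields some such $u$ with $a \le u$, and the inequality $a \le u \lll c$ at once upgrades to $a \lll c$. Given interpolation, I would define $\sigma_a \in L^*$ by $\sigma_a(b) = \top$ iff $a \lll b$; interpolation is exactly what makes $\sigma_a$ preserve joins. Supercontinuity then gives $\bigvee_{a \in L} \sigma_a(b) \scalarMult a = \bigvee\{a : a \lll b\} = b$, so $(a, \sigma_a)_{a \in L}$ is a dual basis.

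The hard part will be the interpolation lemma in the last implication: without it the natural candidate for $\sigma_a$ fails to preserve joins, and the translation of supercontinuity into a dual basis breaks down. Everything else is a direct unwinding of the definitions together with the dual basis identity and the fact that a join in $\Omega$ equals $\top$ only if one of its summands does.
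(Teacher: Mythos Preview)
Your argument is correct in all three parts; in particular the interpolation step for (iii) $\Rightarrow$ (i) is handled properly, and your check that $\sigma_a$ preserves joins (including the empty one, since $a \lll 0$ fails by taking $S = \emptyset$) goes through constructively.

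However, the paper does not actually supply a proof of this lemma. It is stated as a known result and immediately followed by the remark that such suplattices are the \emph{constructively completely distributive} ones, with a citation to Fawcett--Wood. So there is nothing to compare your proof against: you have written out in full the standard argument that the paper simply takes for granted. The route you chose---reducing dualisability to a dual basis via the monoidal-closed structure, and linking dual bases to supercontinuity through the interpolation property of $\lll$---is exactly the one found in the literature on constructive complete distributivity.
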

Assuming the axiom of choice, these are precisely the completely distributive lattices, so dualisable suplattices are also called \emph{constructively completely distributive} \cite{Fawcett1990completeDistributivity}.
In particular, such a suplattice is always a frame. Moreover, since $0 \lll a$ precisely when $a$ is positive, it is not hard to show such a supercontinuous frame is always overt.

\subsection{The Zariski spectrum}

The \emph{Zariski spectrum} of a (discrete) ring $R$ is the classifying locale of the geometric theory of the prime anti-ideals of $R$.
Each element $f \in R$ gives a proposition $\overline{f}$ on the prime anti-ideals of $R$. We interpret $\overline{f}$ holding for a prime anti-ideal as meaning that $f$ lies in it.
The definition of a prime anti-ideal is then given by the following axioms.
\begin{align*}
 \overline{0}    &\mathmakebox[4.5ex][c]{\vdash} \bot \\
 \overline{f+g}  &\mathmakebox[4.5ex][c]{\vdash} \overline{f} \lor \overline{g} \\
 \top &\mathmakebox[4.5ex][c]{\vdash} \overline{1} \\
 \overline{fg}   &\mathmakebox[4.5ex][c]{\dashv\vdash} \overline{f} \land \overline{g}
\end{align*}
More geometrically, we can imagine the elements of $R$ as functions defined on the spectrum where $f$ is \emph{cozero} (that is, nonzero in a positive sense) at a point
if and only if it lies in the corresponding anti-ideal.

Explicitly, the frame obtained from this propositional theory has the following presentation.
\[\langle \overline{f} : f \in R \mid \overline{0} = 0,\, \overline{f+g} \le \overline{f} \vee \overline{g},\,
\overline{1} = 1,\, \overline{fg} = \overline{f} \wedge \overline{g} \rangle\]
This can be shown to correspond to the lattice $\Rad(R)$ of radical ideals of $R$.

A similar presentation of two-sided quantales
\[\langle \overline{f} : f \in R \mid \overline{0} = 0,\, \overline{f+g} \le \overline{f} \vee \overline{g},\,
\overline{1} = 1,\, \overline{fg} = \overline{f} \mult \overline{g} \rangle\]
gives the suplattice $\Idl(R)$ of ideals of $R$ with a quantale operation given by multiplication of ideals.
Then applying the reflection of two-sided quantales into frames gives the frame of radical ideals described above.

For the quantale $\Idl(R)$, the generators corresponding to $f$ and $f^2$ are different. Geometrically, this can be thought of as arising from the fact that while $f$ and $f^2$ vanish in the same locations,
if $f$ vanishes to first order, then $f^2$ will vanish to second order --- that is, both $f$ and its first derivatives will be zero.

\section{Spectra from generalised presentations}

We could perhaps define the spectrum of a localic semiring directly in terms of overt weakly closed radical ideals, but this might not be completely convincing.
It would be better to define the spectrum by a universal property. We will take the presentation of the Zariski spectrum as our starting point.

We wish to generalise the presentation of the Zariski spectrum from discrete rings to localic semirings. No modifications are necessary to handle the case of discrete semirings,
but the non-discrete case requires some more care. In the presentation of the Zariski spectrum of $R$, the generators are given by points of $R$, but this is obviously not appropriate for a non-spatial localic semiring.
We require a generalised notion of presentation that allows the generators and relations to be locales.

A usual presentation of a frame $L$ can be viewed as expressing the frame as the coequaliser of maps between free frames $\langle \mathsf{R}\rangle \rightrightarrows \langle \mathsf{G}\rangle \twoheadrightarrow L$,
where $\mathsf{G}$ is the set of generators and $\mathsf{R}$ indexes the relations. From the localic perspective, the free frame on $\mathsf{G}$ corresponds to the $\mathsf{G}^\text{th}$ power of Sierpiński space $\Srpnsk$
and so the presentation corresponds to an equaliser $L \hookrightarrow \Srpnsk^G \rightrightarrows \Srpnsk^\mathsf{R}$ (where we reuse the variable $L$ for the corresponding locale).

We now reinterpret $\mathsf{G}$ and $\mathsf{R}$ as discrete locales and the powers of $\Srpnsk$ as exponentials. This allows us to replace $\mathsf{R}$ and $\mathsf{G}$ with any exponentiable locales. However, since our localic semirings might
not be locally compact, this is not yet sufficiently general for our purposes.

We can circumvent the nonexistence of exponentials by passing temporarily to the presheaf category $\Set^{\Loc\op}$ and performing the calculation there.
In this category, the exponential $\Srpnsk^\mathsf{G}$ is given by $\Hom_\Loc((-) \times \mathsf{G}, \Srpnsk)$.
We now consider the equaliser of a pair of natural transformations $F \hookrightarrow \Hom_\Loc((-) \times \mathsf{G}, \Srpnsk) \rightrightarrows \Hom_\Loc((-) \times \mathsf{R}, \Srpnsk)$.
The resulting functor is not always representable, but when it is, we say the representing object $L$ is presented by the \emph{generalised presentation}.
Observe that when the exponentials do exist, this coincides with the less general kind of presentation mentioned above.

It is shown in \cite{VickersTownsendDoublePowerlocale} that natural transformations from $\Hom_\Loc((-) \times \mathsf{G}, \Srpnsk)$ to $\Hom_\Loc((-) \times \mathsf{R}, \Srpnsk)$
are in bijection with dcpo morphisms from $\O \mathsf{G}$ to $\O \mathsf{R}$ (by taking the component of each natural transformation at $1$).
Thus, a generalised presentation may be described by a locale $\mathsf{G}$ of generators and a pair of dcpo morphisms from $\mathsf{G}$ into another locale $\mathsf{R}$ describing the relations.

Now mimicking the Zariski spectrum, we define the spectrum of a localic semiring $R$ by a generalised presentation where $R$ is the locale of generators.
In the Zariski case we have one relation involving $0$, one involving $1$, an $(R\times R)$-indexed family involving addition and an $(R\times R)$-indexed family involving multiplication, so the
relations are indexed by the set $1 \sqcup 1 \sqcup R\times R \sqcup R\times R$. Similarly, in our case the relations are indexed by the locale given by the frame
$\Omega \times \Omega \times (\O R \oplus \O R) \times (\O R \oplus \O R)$.
The relations are described by a parallel pair of dcpo morphisms, which decompose into four pairs of dcpo morphisms $\O R \rightrightarrows \Omega$, $\O R \rightrightarrows \Omega$,
$\O R \rightrightarrows \O R \oplus \O R$ and $\O R \rightrightarrows \O R \oplus \O R$. The first pair is $\epsilon_0$ and the constant function $0$. The second pair is $\epsilon_1$ and the constant function $1$.
The third is $\mu_+ \vee \iota_1 \vee \iota_2$ and $\iota_1 \vee \iota_2$, where $\iota_{1,2}\colon R \to R\oplus R$ are the coproduct injections. The fourth pair is $\mu_\times$ and $\iota_1 \wedge \iota_2$.
(Note that the third pair is setting $\mu_+ \vee \iota_1 \vee \iota_2$ equal to $\iota_1 \vee \iota_2$,
which is the same as requiring $\mu_+$ to be less than or equal to $\iota_1 \vee \iota_2$.)

The resulting equaliser for this generalised presentation can be explicitly described as a functor $\OPAI_R\colon \Loc\op \to \Set$ whose action on objects is given by
\begin{align*}
 \OPAI_R\colon X \mapsto \big\{ u \in \O X \oplus \O R \mid {}
 & (\O X \oplus \epsilon_0)(u) = 0,\ (\O X \oplus \epsilon_1)(u) =  1, \\ 
 & (\O X \oplus \mu_+)(u) \le (\O X \oplus \iota_1)(u) \vee (\O X \oplus\iota_2)(u), \\
 &  (\O X \oplus \mu_\times)(u) = (\O X \oplus\iota_1)(u) \wedge (\O X \oplus\iota_2)(u) \big\}
\end{align*}
where we use elements of the frame $\O X \oplus \O R$ in place of locale maps $X \times R \to \Srpnsk$.
If $f\colon Y \to X$ is a map of locales, then $\OPAI_R(f)$ sends $u \in \OPAI_R(X)$ to $(f^* \oplus \O R)(u)$.
We call this $\OPAI$ since it gives the open prime anti-ideals (or equivalently the closed prime ideals) of $R$ `fibred over $X$'.
In particular, note that $\OPAI_R(1)$ is the set of open prime anti-ideals of $R$.

We call the representing object of $\OPAI_R$ the \emph{(localic) spectrum} of $R$ (in the case it exists).
Note that $\OPAI_R$ is also functorial in $R$ and so we obtain a partial (ana)functor $\Spec\colon \LocSRng\op \rightharpoonup \Loc$ by parametrised representability.

This definition is easily extended from locales to two-sided quantales resulting in a functor $\overline{\OPAI}_R\colon \Quant \to \Set$
given by
\begin{align*}
 \overline{\OPAI}_R\colon Q \mapsto \big\{ u \in Q \oplus \O R \mid {}
 & (Q \oplus \epsilon_0)(u) = 0,\ (Q \oplus \epsilon_1)(u) =  1, \\ 
 & (Q \oplus \mu_+)(u) \le (Q \oplus \iota_1)(u) \vee (Q \oplus\iota_2)(u), \\
 &  (Q \oplus \mu_\times)(u) = (Q \oplus\iota_1)(u) \mult (Q \oplus\iota_2)(u) \big\}.
\end{align*}
When it exists, the representing object of $\overline{\OPAI}_R$ is called the \emph{quantic spectrum} of $R$. In this case, the localic spectrum is obtained as the localic reflection of the quantic spectrum.

In the following sections we will give conditions on $R$ for this spectrum to exist and describe its form under these assumptions.

\section{Monoid ideals and saturated elements}

Before we try to construct the spectrum of a localic semiring,
let us consider the simpler case of the spectrum of a commutative localic \emph{monoid} $M$.

We start by defining a variant of $\overline{\OPAI}$ which only involves the multiplicative relations.

\begin{definition}
The functor $\overline{\OPMAI}_M\colon \Quant \to \Set$ is defined on objects by
\begin{align*}
 \overline{\OPMAI}_M\colon Q \mapsto \big\{ u \in Q \oplus \O M \mid {}
 &  (Q \oplus \epsilon_1)(u) =  1, \\
 &  (Q \oplus \mu_\times)(u) = (Q \oplus\iota_1)(u) \mult (Q \oplus\iota_2)(u) \big\}
\end{align*}
and acts on morphisms in the obvious way.
\end{definition}

Recall that a \emph{monoid ideal} $I$ in a commutative monoid $M$ is a subset of $M$ for which $x \in I, y \in M \implies xy \in I$.
The above functor gives the (quantic) `open' prime \emph{monoid} anti-ideals of $M$.

In the discrete case, the quantic monoid spectrum is given by the quantale of monoid ideals on $M$, which we denote by $\MonIdl M$.
This is isomorphic to the free two-sided quantale on the monoid $M$, explicitly $\langle \overline{f} : f \in M \mid \overline{1} = 1,\, \overline{fg} = \overline{f} \mult \overline{g} \rangle$.

The unit map of this free--forgetful adjunction sends $f \in M$ to the generator $\overline{f} \in \MonIdl M$, which corresponds to the monoid ideal $fM$.
Note that this map is not injective in general and so we may gain some further understanding by replacing the generating set with the relevant quotient of $M$.

The order structure on $\MonIdl M$ induces an preorder on $M$ which is the opposite of the usual \emph{divisibility preorder}: $fM \subseteq gM \iff f \in gM \iff \exists k \in M.\ f = gk \iff g \mid f$.
The equivalence relation induced by this preorder is a monoid congruence and so we may quotient $M$ by it to obtain a monoid which is partially ordered by (the reverse of) divisibility.
Such a monoid is sometimes said to be \emph{naturally partially ordered} or a \emph{holoid}.

We can express this holoid quotient $M/{\sim}$ as the following coinserter in the 2-category of posets.
\begin{center}
  \begin{tikzpicture}[node distance=2.5cm, auto]
    \node (A) {$M \times M$};
    \node (B) [right of=A] {$M$};
    \node (C) [right of=B] {$M/{\sim}$};
    \draw[transform canvas={yshift=0.6ex},->] (A) to node {$\pi_1$} (B);
    \draw[transform canvas={yshift=-0.6ex},->] (A) to [swap] node {$\mu_\times$} (B);
    \draw[->>] (B) to node {} (C);
  \end{tikzpicture}
\end{center}

For a commutative localic monoid $M$, we can then take the same coinserter in $\Loc$ obtaining the following inserter in $\Frm$.
\begin{center}
  \begin{tikzpicture}[node distance=2.5cm, auto]
    \node (A) {$\Sats M$};
    \node (B) [right of=A] {$\O M$};
    \node (C) [right of=B] {$\O M \oplus \O M$};
    \draw[transform canvas={yshift=0.6ex},->] (B) to node {$\iota_1$} (C);
    \draw[transform canvas={yshift=-0.6ex},->] (B) to [swap] node {$\mu_\times$} (C);
    \draw[right hook->] (A) to node {$\kappa$} (B);
  \end{tikzpicture}
\end{center}

The resulting frame $\Sats M$ consists of what we call the \emph{saturated opens} of $M$. These are the elements $s \in \O M$ for which $\mu_\times(s) \le \iota_1(s)$ ---
or equivalently, for which $xy \in s \vdash_{x,y} x \in s$ in the internal logic.
For a discrete ring, these are precisely the \emph{saturated sets} of commutative algebra.

\begin{lemma}\label{prop:homomorphism_preserve_saturated_elt}
 Comonoid homomorphisms in $\Frm$ preserve saturated opens.
\end{lemma}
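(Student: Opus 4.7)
The proof is essentially a one-line diagram chase, so the plan is to identify the relevant naturality squares and assemble them.

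Let $\phi \colon \O M \to \O N$ be a comonoid homomorphism in $\Frm$, corresponding to a localic monoid homomorphism $N \to M$. By definition of comonoid homomorphism (with respect to the multiplicative structure), we have the compatibility square
\[ \mu_\times^N \circ \phi = (\phi \oplus \phi) \circ \mu_\times^M. \]
Moreover, the naturality of the coproduct injection gives $(\phi \oplus \phi) \circ \iota_1^M = \iota_1^N \circ \phi$. Finally, $\phi \oplus \phi$ is a frame homomorphism and in particular monotone.

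Suppose $s \in \O M$ is saturated, i.e.\ $\mu_\times^M(s) \le \iota_1^M(s)$. Applying the monotone map $\phi \oplus \phi$ to both sides and using the two identities above, I obtain
\[ \mu_\times^N(\phi(s)) = (\phi \oplus \phi)\bigl(\mu_\times^M(s)\bigr) \le (\phi \oplus \phi)\bigl(\iota_1^M(s)\bigr) = \iota_1^N(\phi(s)), \]
which is precisely the saturation condition for $\phi(s) \in \O N$.

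There is no serious obstacle here: the only thing to be careful about is tracking that the relevant comonoid axiom is the one for $\mu_\times$ (the counit axiom for $\epsilon_1$ plays no role), and that $\iota_1$ is genuinely natural in its variable frame, which is immediate from the universal property of the coproduct in $\Frm$. I would present the proof as the displayed chain of (in)equalities above, preceded by a single sentence identifying the two naturality identities used.
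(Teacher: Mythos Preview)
Your proof is correct and essentially identical to the paper's: both apply $\phi \oplus \phi$ (which is monotone) to the saturation inequality and use the comonoid-homomorphism square $\mu_\times^N \phi = (\phi \oplus \phi)\mu_\times^M$ together with the naturality identity $(\phi \oplus \phi)\iota_1^M = \iota_1^N \phi$ to conclude. The paper's version is just the same chain of (in)equalities written more tersely.
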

\begin{proof}
 Suppose $f\colon \O M \to \O M'$ is a morphism of cocommutative comonoids and $s$ is a saturated element in $\O M$.
 Then $\mu'_\times f (s) = (f \oplus f) \mu_\times (s) \le (f \oplus f) \iota_1^M (s) = \iota_1^{M'} f(s)$ and hence $f(s)$ is saturated, as required.
\end{proof}

\begin{corollary}
 We obtain a functor $\Sats\colon \CComon(\Frm) \to \Frm$ taking a cocommutative comonoid in $\Frm$ to its subframe of saturated elements.
 The inclusion $\kappa$ gives a natural transformation from $\Sats$ to the forgetful functor from $\CComon(\Frm)$ to $\Frm$.
\end{corollary}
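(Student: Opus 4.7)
The plan is to leverage \cref{prop:homomorphism_preserve_saturated_elt} together with the fact that the defining condition of saturatedness is preserved by all the frame operations. The whole argument is essentially formal once the two key facts (the preceding lemma, and the good behaviour of the inequality $\mu_\times(s) \le \iota_1(s)$ under joins and meets) are in place.

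First I would verify that $\Sats M$ is a subframe of $\O M$, so that it actually lives in $\Frm$. Since $\mu_\times$ and $\iota_1$ are both frame homomorphisms, they preserve arbitrary joins and finite meets; consequently, if $(s_i)_{i\in I}$ is a family of saturated opens, then $\mu_\times(\bigvee_i s_i) = \bigvee_i \mu_\times(s_i) \le \bigvee_i \iota_1(s_i) = \iota_1(\bigvee_i s_i)$, and similarly for binary meets. The elements $0$ and $1$ are saturated because both maps preserve them. Thus $\Sats M$ is closed in $\O M$ under the frame operations and the inclusion $\kappa\colon \Sats M \hookrightarrow \O M$ is a frame homomorphism.

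Next, for a cocommutative comonoid morphism $f\colon \O M \to \O{M'}$, \cref{prop:homomorphism_preserve_saturated_elt} guarantees that $f$ restricts to a map $\Sats f\colon \Sats M \to \Sats M'$. Since $\kappa_M$ and $\kappa_{M'}$ are inclusions of subframes and $f$ is a frame map, this restriction is itself a frame map; it is characterised by $\kappa_{M'} \circ \Sats f = f \circ \kappa_M$. Functoriality (preservation of identities and composites) is inherited from the ambient category $\Frm$, because restricting the identity gives the identity and restricting a composite is the composite of the restrictions.

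Finally, the identity $\kappa_{M'} \circ \Sats f = f \circ \kappa_M$ written above is precisely the naturality square for $\kappa$ with respect to $f$, so $\kappa$ is a natural transformation from $\Sats$ to the forgetful functor $\CComon(\Frm) \to \Frm$. There is no real obstacle here: the only non-formal ingredient is \cref{prop:homomorphism_preserve_saturated_elt}, which has already been proved, and the rest is a routine verification that the inserter $\Sats M$ carries the restricted frame structure and that the construction is 2-natural in $M$.
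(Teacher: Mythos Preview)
Your proof is correct and follows the approach the paper intends; indeed the paper offers no explicit proof here, treating the corollary as immediate from \cref{prop:homomorphism_preserve_saturated_elt}. One small redundancy: the paper has already defined $\Sats M$ as an inserter in $\Frm$, so the fact that it is a subframe and that $\kappa$ is a frame homomorphism is built into the definition --- your direct verification of closure under joins and finite meets is unnecessary (though not wrong).
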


We can say more about $\Sats M$ when $M$ is overt. In that case $\iota_1$ has a left adjoint so that $\mu_\times(s) \le \iota_1(s) \iff (\iota_1)_!\mu_\times(s) \le s$.
\begin{lemma}
 Let $M$ be an overt commutative localic monoid. Then $(\iota_1)_!\mu_\times$ is a closure operator whose fixed points are the saturated elements.
\end{lemma}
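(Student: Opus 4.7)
The plan is to verify that $\phi = (\iota_1)_! \mu_\times$ is extensive and idempotent as a suplattice endomorphism of $\O M$; monotonicity is automatic. The fixed-point characterisation is then immediate from the adjunction $(\iota_1)_! \dashv \iota_1$, which gives $\phi(s) \le s \iff \mu_\times(s) \le \iota_1(s)$, i.e.\ that $s$ is saturated, and extensivity promotes this inequality to equality for saturated $s$.

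For extensivity, I would combine the right counit axiom $(\id\otimes\epsilon_1)\mu_\times = \id$ with the inequality $\epsilon_1 \le \exists_M$. The inequality arises as follows: the multiplicative unit of $M$ provides a section of $!_M\colon M\to 1$, so $\epsilon_1\circ !_M^* = \id_\Omega$; applying the order-preserving frame map $\epsilon_1$ to the adjunction unit $a \le !_M^*\exists_M(a)$ then yields $\epsilon_1(a)\le\exists_M(a)$. Using the earlier identification $(\iota_1)_! = \id\otimes\exists_M$ (up to the unitor), we conclude $\id = (\id\otimes\epsilon_1)\mu_\times \le (\id\otimes\exists_M)\mu_\times = \phi$.

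For idempotency it suffices, given extensivity, to show $\phi\phi\le\phi$. I plan to combine three ingredients: (a) the commutation $\mu_\times\circ(\id\otimes\exists_M) = (\id\otimes\id\otimes\exists_M)\circ(\mu_\times\otimes\id)$, verified on a simple tensor $a\otimes b$ where both sides reduce to $\exists_M(b)\scalarMult\mu_\times(a)$ using that $\mu_\times$ is a $\Sup$-morphism and so preserves the $\Omega$-scalar action; (b) coassociativity $(\mu_\times\otimes\id)\mu_\times = (\id\otimes\mu_\times)\mu_\times$ of the comonoid $\O M$; and (c) the identity $(\exists_M\otimes\exists_M)\mu_\times = \exists_M$, which follows by identifying $\exists_M\otimes\exists_M$ with $\exists_{M\times M}$ (via the earlier lemma applied iteratively) together with $!_{M\times M} = !_M\circ m$ and the order-reflection of $\mu_\times$---its $\Sup$-left-inverse being $\id\otimes\epsilon_1$, so the adjunction computation for $\exists_{M\times M}\mu_\times(a)\le p$ collapses via $\mu_\times(!_M^*(p))$ to $\exists_M(a)\le p$. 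Substituting (a) into the middle of $\phi\phi = (\id\otimes\exists_M)\mu_\times(\id\otimes\exists_M)\mu_\times$, then applying (b), and regrouping the resulting composition in the form $\bigl(\id\otimes(\exists_M\otimes\exists_M)\mu_\times\bigr)\mu_\times$, we invoke (c) to conclude $\phi\phi = (\id\otimes\exists_M)\mu_\times = \phi$.

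The main obstacle is identity (a): conceptually this is a Beck--Chevalley instance for the pullback cut out by $m\colon M\times M\to M$ and the projection $\pi_1$, but in the present explicit setting it is most cleanly dispatched by the direct tensor calculation above. String-diagram intuition---splitting wires by $\mu_\times$, capping right-hand wires by $\exists_M$, and using coassociativity to shift which branch is capped---makes the idempotency manipulation transparent.
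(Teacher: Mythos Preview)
Your proof is correct but takes a different route from the paper. The paper argues entirely in the internal logic of the coherent hyperdoctrine of opens: extensivity becomes the sequent $x \in a \vdash_x \exists y.\ xy \in a$, witnessed by $y = 1$ via the unit axiom, and idempotency becomes $\exists z.\ \exists y'.\ (xz)y' \in a \vdash_x \exists y.\ xy \in a$, witnessed by $y = zy'$ via associativity. Your argument instead stays at the categorical level in $\Sup$: extensivity from the counit law $(\id\otimes\epsilon_1)\mu_\times = \id$ together with $\epsilon_1 \le \exists_M$, and idempotency from the commutation (a), coassociativity, and the identity $\exists_{M\times M}\mu_\times = \exists_M$ (which you correctly derive from $!_{M\times M} = !_M \circ m$ and the split-mono property of $\mu_\times$). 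The two arguments are really the same facts viewed through different lenses---your (a)+(b)+(c) are exactly the external translations of ``substitute $xz$ for $x'$'', ``associativity'', and ``existentially quantify''---but the internal-logic phrasing is shorter and more transparent once that machinery is in hand, while your version has the advantage of not relying on the hyperdoctrine setup and making explicit which categorical identities are doing the work.
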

\begin{proof}
 It is clearly monotone. To show it is inflationary we need $a \le (\iota_1)_! \mu_\times(a)$, which in the internal logic means $x \in a \vdash_x \exists y\colon M.\ xy \in a$.
 But this holds for $y = 1$ (the unit of the monoid), since $x \mult 1 =_{x} x$ by the unit axiom.
 
 To prove idempotence we require $(\iota_1)_! \mu_\times (\iota_1)_! \mu_\times(a) \le (\iota_1)_! \mu_\times(a)$. Let us translate this into the internal logic.
 The open $(\iota_1)_! \mu_\times(a)$ can be described by the formula $\exists y'.\ x'y' \in a$ in the context $x' \colon M$.
 Then $\mu_\times (\iota_1)_! \mu_\times(a)$ corresponds to substituting $xz$ for $x'$ to give the formula $\exists y'.\ (xz)y' \in a$ in the context $x\colon M, z\colon M$.
 Finally, $(\iota_1)_! \mu_\times (\iota_1)_! \mu_\times(a)$ is obtained by existentially quantifying over $z$ to give the formula $\exists z.\ \exists y'.\ (xz)y' \in a$.
 Thus, we must prove the sequent $\exists z.\ \exists y'.\ (xz)y' \in a \vdash_x \exists y.\ xy \in a$. This is proved by taking $y = zy'$ and using associativity.
 
 The fixed points are the elements such that $(\iota_1)_!\mu_\times(a) \le a$. But these are precisely the saturated elements.
\end{proof}

Note that this implies that for overt $M$ the inclusion $\kappa$ has a left adjoint, which we write as $\kappa_!$ and obtain from $(\iota_1)_!\mu_\times$ by restricting its codomain to its image.

\begin{corollary}
 Under these assumptions, the inserter
 \begin{center}
  \begin{tikzpicture}[node distance=2.5cm, auto]
    \node (A) {$\Sats M$};
    \node (B) [right of=A] {$\O M$};
    \node (C) [right of=B] {$\O M \otimes \O M$};
    \draw[transform canvas={yshift=0.6ex},->] (B) to node {$\iota_1$} (C);
    \draw[transform canvas={yshift=-0.6ex},->] (B) to [swap] node {$\mu_\times$} (C);
    \draw[right hook->] (A) to node {$\kappa$} (B);
  \end{tikzpicture}
\end{center}
is an absolute weighted limit in $\Sup$.
\end{corollary}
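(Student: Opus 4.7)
The plan is to recognise the inserter as the splitting of an idempotent on $\O M$ and to invoke the classical fact that splittings of idempotents are absolute (Cauchy) weighted limits in any enriched category.

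First I would reformulate the inserter in terms of the closure operator $p \coloneqq (\iota_1)_!\mu_\times$ provided by the preceding lemma. Restricting the codomain of $p$ to its image yields a suplattice morphism $\kappa_!\colon \O M \to \Sats M$ which is left adjoint to $\kappa$ and satisfies $\kappa_!\kappa = \id_{\Sats M}$ and $\kappa\kappa_! = p$; thus $\kappa$ and $\kappa_!$ exhibit $\Sats M$ as the splitting of the idempotent $p$ in $\Sup$. For any $h\colon L \to \O M$ in $\Sup$, the inserter condition $\mu_\times h \le \iota_1 h$ is equivalent under $(\iota_1)_! \dashv \iota_1$ to $p h \le h$, and since $p$ is inflationary, to $p h = h$. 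The unique factorisation of such an $h$ through $\kappa$ is in both cases given by $\kappa_! h$, so the inserter universal property and the splitting universal property coincide on the nose.

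To lift this to absoluteness, I would apply an arbitrary $\Sup$-enriched functor $F\colon \Sup \to \Cvar$. Such an $F$ preserves the equations $\kappa_!\kappa = \id$ and $\kappa\kappa_! = p$, so $F(\Sats M)$ remains the splitting of $F(p)$ in $\Cvar$. Because $F$ is in particular order-enriched, it also preserves the suplattice adjunction $(\iota_1)_! \dashv \iota_1$ and the inflationary inequality $\id \le p$, so the argument of the previous paragraph applies verbatim inside the image of $F$: the inserter of $F(\iota_1), F(\mu_\times)$ again coincides with the splitting of $F(p)$, and is therefore computed by $F(\Sats M)$.

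The only real subtlety is ensuring that the coincidence between the inserter condition and the fixed-point condition survives application of $F$, which reduces to the two observations that enriched functors preserve suplattice adjoints and the pointwise order on morphisms; once these are in hand, absoluteness is a direct appeal to the standard Cauchyness of idempotent splittings.
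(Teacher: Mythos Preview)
Your approach is essentially the same as the paper's, which gives only the one-line observation that $\Sats M$ is obtained by splitting the idempotent $(\iota_1)_!\mu_\times$. You have simply made explicit the point the paper leaves to the reader: that the identification of the inserter with the idempotent splitting itself transports along any order-enriched (or $\Sup$-enriched) functor, because such functors preserve the adjunction $(\iota_1)_! \dashv \iota_1$ and the inequality $\id \le p$. This extra care is well placed, since later the paper uses precisely the fact that the \emph{inserter} (not merely some absolute limit computing the same object) is preserved by $Q \otimes (-)$.
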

\begin{proof}
 Simply note that $\Sats M$ is obtained by splitting the idempotent $(\iota_1)_!\mu_\times$.
\end{proof}

\begin{lemma}
 If $M$ is an overt commutative localic monoid, then $\Sats M$ is a sub-comonoid of $\O M$.
\end{lemma}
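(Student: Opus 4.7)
\emph{Proof proposal.} My plan is to lift the comonoid structure on $\O M$ along $\kappa$, using the left adjoint $\kappa_!$ provided by the preceding corollary. I would set $\tilde\epsilon_1 := \epsilon_1 \circ \kappa$ (which is automatically a frame morphism into $\Omega$) and, provisionally, $\tilde\mu_\times := (\kappa_! \oplus \kappa_!) \circ \mu_\times \circ \kappa$; since $\kappa_!$ only preserves joins, $\tilde\mu_\times$ is a priori merely a suplattice morphism, but it will turn out to be a frame morphism once we establish the factorisation $(\kappa \oplus \kappa)\tilde\mu_\times = \mu_\times\kappa$.

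The crux is therefore to prove, for every saturated $s$, that $\mu_\times(s)$ is fixed by $e \oplus e$, where $e := \kappa\kappa_! = (\iota_1)_!\mu_\times$ is the saturation closure. The $\ge$ direction is automatic since $e$ is inflationary. For $\le$, I would compute in the internal logic of $\OLoc$: the element $(e \otimes \id)\mu_\times(s)$ corresponds to the predicate $\exists z\colon M.\ (xz)y \in s$ on the two free variables $x,y\colon M$, which by associativity and commutativity of multiplication equals $\exists z.\ (xy)z \in s$; the saturation sequent $uv \in s \vdash_{u,v} u \in s$ then collapses this to $xy \in s$, i.e.\ $\mu_\times(s)$. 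The analogous argument on the second factor (using cocommutativity of $\mu_\times$) yields $(\id \otimes e)\mu_\times(s) \le \mu_\times(s)$, and composing the two gives $(e \oplus e)\mu_\times(s) \le \mu_\times(s)$ as required.

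With the factorisation in hand, the comonoid axioms for $(\Sats M, \tilde\mu_\times, \tilde\epsilon_1)$ transfer formally from those of $\O M$. The cleanest route is to observe that both $\iota_1$ and $\mu_\times$ are comonoid morphisms $\O M \rightrightarrows \O M \oplus \O M$ — the former as a coprojection, the latter because $\O M$ is cocommutative — so $\kappa$ presents $\Sats M$ as an inserter of comonoid morphisms in $\Frm$, whose comonoid structure is induced from that of $\O M$ in the standard way. Failing that, one verifies each axiom directly by post-composing with a suitable $\kappa^{\oplus n}$ and using naturality of $\oplus$ to reduce it to the corresponding axiom for $\O M$.

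The main obstacle is the internal-logic saturation computation in the middle paragraph; the surrounding steps are essentially the unpacking of adjunctions and routine bookkeeping with comonoids.
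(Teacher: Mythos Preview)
Your proposal is correct and arrives at the same conclusion, but the route differs from the paper's in an instructive way.

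The paper argues more functorially. It first observes that the saturation closure on $\O M \oplus \O M$ (with its product comonoid structure) is exactly $(\iota_1)_!\mu_\times \otimes (\iota_1)_!\mu_\times = (\kappa\otimes\kappa)(\kappa\otimes\kappa)_!$, so that $\kappa\oplus\kappa$ identifies $\Sats M \oplus \Sats M$ with $\Sats(M\times M)$. Then, since $M$ is commutative, $\mu_\times$ is itself a comonoid homomorphism, and the earlier lemma that comonoid homomorphisms preserve saturated opens immediately gives the restriction $\mu^\Sats_\times$. The axioms are then checked exactly as you suggest, by post-composing with $\kappa^{\oplus n}$.

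Your argument instead bypasses the general preservation lemma and verifies directly, via the internal logic, that $\mu_\times(s)$ is an $(e\otimes e)$-fixed point whenever $s$ is saturated: from $\exists z.\ (xz)y \in s$ you rearrange to $\exists z.\ (xy)z \in s$ and apply the saturation sequent. This is perfectly sound; in effect you are reproving the special case of the preservation lemma needed here. What the paper's approach buys is reusability --- the same lemma is later invoked to show $\Sats$ is functorial and that $\kappa$ is natural --- whereas your approach is more self-contained and avoids invoking the identification $\Sats(M\times M)\cong \Sats M\oplus \Sats M$ explicitly (you only need that the image of $\kappa\oplus\kappa$ consists of the $(e\otimes e)$-fixed elements, which follows from idempotent splitting). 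Your aside about $\Sats M$ being an inserter of comonoid morphisms is a nice conceptual gloss, essentially the paper's viewpoint phrased $2$-categorically.
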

\begin{proof}
 The coproduct $\O M \oplus \O M$ has a natural comonoid structure and the saturation closure operator is given by
 $(\iota_1 \otimes \iota_1)_! (\mu_\times \otimes \mu_\times) = ((\iota_1)_! \otimes (\iota_1)_!) (\mu_\times \otimes \mu_\times) = (\iota_1)_!\mu_\times \otimes (\iota_1)_!\mu_\times =
 \kappa\kappa_! \otimes \kappa\kappa_! = (\kappa \otimes \kappa)(\kappa \otimes \kappa)_!$.
 Therefore, $\kappa \oplus \kappa\colon \Sats M \oplus \Sats M \to \O M \oplus \O M$ is isomorphic to the inclusion of the frame of saturated opens
 of $\O M \oplus \O M$.
 
 Now since $M$ is commutative, the comultiplication map $\mu_\times\colon \O M \to \O M \oplus \O M$ is a comonoid homomorphism and hence preserves saturated elements by \cref{prop:homomorphism_preserve_saturated_elt}.
 Thus, $\mu_\times$ restricts to give a map $\mu^\Sats_\times = \Sats \mu_\times$ shown in the diagram below.
 
 \begin{center}
 \begin{tikzpicture}[node distance=2.85cm, auto]
  \node (A) {$\O M$};
  \node (B) [right of=A] {$\O M \oplus \O M$};
  \node (C) [below of=A] {$\Sats M$};
  \node (D) [right of=C] {$\Sats M \oplus \Sats M$};
  \draw[->] (A) to node {$\mu_\times$} (B);
  \draw[right hook->] (C) to node {$\kappa$} (A);
  \draw[right hook->] (D) to [swap] node {$\kappa \oplus \kappa$} (B);
  \draw[->] (C) to [swap] node {$\mu^\Sats_\times$} (D);
 \end{tikzpicture}
 \end{center}
 
 The (co)associativity of $\mu^\Sats_\times$ follows from that of $\mu_\times$ and the fact that $\kappa^{\oplus 3}$ is monic.
 The counit of $\O M$ gives a counit for $\Sats M$ in the obvious way and we see that $\kappa$ is a comonoid homomorphism.
\end{proof}

\begin{corollary}
 The functor $\Sats$ on the category of overt cocommutative comonoids in $\Frm$ lifts to a functor $\Sats$ from $\CComon(\OFrm)$ to itself and $\kappa$ becomes a natural transformation from $\Sats$ to the identity functor.
\end{corollary}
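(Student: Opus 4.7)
The plan is to reduce the statement to a single point, namely that $\Sats M$ is an overt frame whenever $M$ is an overt commutative localic monoid. Once this is verified, everything else is inherited from the preceding lemma and corollary: since $\OFrm$ is a full subcategory of $\Frm$, comonoid morphisms in $\OFrm$ are exactly the comonoid morphisms in $\Frm$ between overt objects, so the functoriality of $\Sats$ on $\CComon(\OFrm)$ and the naturality of $\kappa$ follow formally from the $\CComon(\Frm)$-level statement already established.

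For the overtness of $\Sats M$, I would leverage the fact, already recorded above, that the frame inclusion $\kappa\colon \Sats M \hookrightarrow \O M$ admits a left adjoint $\kappa_!$ when $M$ is overt. Writing $\exists_M\colon \O M \to \Omega$ for the left adjoint to the unique frame map ${!}_{\O M}\colon \Omega \to \O M$ that witnesses overtness of $M$, my candidate left adjoint to the unique frame map ${!}'\colon \Omega \to \Sats M$ is the composite $\exists_M \circ \kappa$. Adjointness is immediate from the identity ${!}_{\O M} = \kappa \circ {!}'$ together with the injectivity of $\kappa$, while the Frobenius reciprocity required for overtness follows from that of $\exists_M$ using only that $\kappa$, being a frame map, preserves binary meets.

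I do not anticipate any substantial obstacle; most of the real content sits in the preceding lemma describing the saturation closure operator, and what remains is essentially transport of the overtness witness along the reflection $\kappa_! \dashv \kappa$. The one subtlety worth flagging is using the right characterisation of overtness—the existence of a left adjoint satisfying Frobenius, equivalently the openness of the unique map to the terminal locale—rather than merely the existence of a base of positive elements; but these are equivalent by the lemma stated earlier in the excerpt, and the Frobenius argument above covers it uniformly.
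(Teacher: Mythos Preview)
Your proposal is correct and supplies exactly the details the paper leaves implicit; the corollary is stated without proof there, and your reduction to showing that $\Sats M$ is overt, together with the observation that the comonoid structure and naturality of $\kappa$ then transfer for free from the preceding lemma and corollary, is the intended argument. One small remark: the Frobenius condition you take care to verify is in fact automatic for any left adjoint to the unique frame map $\Omega \to L$, since ${!}(p) = \bigvee\{1 \mid p\}$ and left adjoints preserve joins; so merely exhibiting $\exists_M \circ \kappa$ as a left adjoint to ${!}'$ already suffices, though checking Frobenius explicitly does no harm.
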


\begin{definition}
 We say a commutative localic monoid $M$ is \emph{deflationary} if $\mu_\times \le \iota_1$.
\end{definition}
We call such a localic monoid deflationary, since it gives a pointfree description of the condition that for all $x \in M$, the map $(-) \mult x$ is deflationary with respect to the specialisation order.
Note that in a deflationary localic monoid all opens are saturated.

\begin{proposition}
 Overt deflationary comonoids in $\Frm$ form a coreflective subcategory of overt cocommutative comonoids where $\Sats$ is the coreflector and $\kappa$ is the counit.
\end{proposition}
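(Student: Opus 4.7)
The plan. To exhibit $\Sats$ as a coreflector with counit $\kappa$, I will verify two things: first, that $\Sats M$ actually lies in the subcategory of overt deflationary comonoids, so that $\Sats$ lands where claimed; second, the universal property of the counit, namely that for any overt deflationary comonoid $N$ and any comonoid homomorphism $f\colon N \to M$ in $\CComon(\OFrm)$ there exists a unique morphism $\tilde f\colon N \to \Sats M$ with $\kappa \circ \tilde f = f$. The two central tools are \cref{prop:homomorphism_preserve_saturated_elt} and the earlier identification of $\kappa \oplus \kappa$ with the inclusion of the frame of saturated opens of $\O M \oplus \O M$.

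Deflationarity of $\Sats M$. The preceding proof already observed that $\kappa \oplus \kappa\colon \Sats M \oplus \Sats M \hookrightarrow \O M \oplus \O M$ is isomorphic to a subframe inclusion; in particular it is an order embedding. Given $s \in \Sats M$, I post-compose the desired inequality $\mu_\times^\Sats(s) \le \iota_1^\Sats(s)$ with this embedding to obtain the equivalent statement $\mu_\times(\kappa(s)) \le \iota_1(\kappa(s))$. But this is precisely the defining condition that $\kappa(s)$ be saturated, which holds by construction, so $\mu_\times^\Sats \le \iota_1^\Sats$ as required.

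The universal property. Suppose $N$ is overt and deflationary; then every element of $\O N$ is trivially saturated. By \cref{prop:homomorphism_preserve_saturated_elt}, the comonoid homomorphism $f$ sends all of $\O N$ into the subframe $\Sats M \subseteq \O M$, so it factors uniquely as $\kappa \circ \tilde f$ for a map $\tilde f\colon \O N \to \Sats M$. Uniqueness and the fact that $\tilde f$ is a frame homomorphism both follow from $\kappa$ being an inclusion of subframes. To upgrade $\tilde f$ to a comonoid morphism, I post-compose the required equations with the monic $\kappa \oplus \kappa$: for comultiplication,
\[(\kappa \oplus \kappa)(\tilde f \oplus \tilde f)\mu_\times^N = (f \oplus f)\mu_\times^N = \mu_\times^M f = (\kappa \oplus \kappa)\mu_\times^\Sats \tilde f,\]
using that $f$ is a comonoid morphism and that $\kappa$ is one, and monicity of $\kappa \oplus \kappa$ then yields $(\tilde f \oplus \tilde f)\mu_\times^N = \mu_\times^\Sats \tilde f$. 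The analogous argument with the counit squares finishes the verification.

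Anticipated obstacle. The only delicate ingredient is promoting the evident set-level factorisation of $f$ through $\kappa$ into a factorisation inside $\CComon(\OFrm)$. This hinges precisely on the monicity of $\kappa \oplus \kappa$, which is why the prior identification of this map as the inclusion of $\Sats(\O M \oplus \O M)$ does most of the heavy lifting. Once that is in place, every structural equation required of $\tilde f$ can be verified after descending into $\O M \oplus \O M$, where it collapses to an equation already holding for $f$.
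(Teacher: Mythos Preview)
Your proof is correct. Both you and the paper begin by showing $\Sats M$ is deflationary via the order-reflecting monic $\kappa \oplus \kappa$, but then diverge: you verify the universal property of the counit directly, factoring an arbitrary $f\colon N \to M$ through $\kappa$ using \cref{prop:homomorphism_preserve_saturated_elt} and then checking the comonoid equations behind the monic $\kappa \oplus \kappa$; the paper instead observes that $\kappa_D = \id$ for deflationary $D$ and $\Sats\kappa = \id$ always, which are precisely the two triangle identities for the adjunction with unit the identity. The paper's route is shorter and avoids repeating the ``transport along $\kappa \oplus \kappa$'' manoeuvre already used in the preceding lemma, while yours has the virtue of making the factorisation completely explicit and not requiring the reader to unwind which adjunction and which triangle identities are meant.
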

\begin{proof}
 We first show that $\Sats M$ is indeed deflationary.
 To see this, observe that $(\kappa \oplus \kappa) \mu^\Sats_\times = \mu_\times \kappa \le \iota_1 \kappa = (\kappa \oplus \kappa) \iota_1^{\Sats M}$.
 But $\kappa \oplus \kappa$ is an injective frame homomorphism and thus reflects order, so that $\mu^\Sats_\times \le \iota_1^{\Sats M}$ as required.
 
 From this we see that $\Sats \kappa$ is always the identity. But $\kappa_D$ is also the identity for deflationary localic monoids $D$.
 These give the two triangle identities and so the result follows.
\end{proof}

We observed above that the holoid quotient of a discrete monoid $M$ can be used to present the spectrum of $M$.
We can now show a similar result for any overt commutative localic monoid.
\begin{proposition}\label{prop:OPMAI_M_cong_OPMAI_SM}
 If $M$ is an overt commutative localic monoid, then $\overline{\OPMAI}_M \cong \overline{\OPMAI}_{\Sats M}$.
\end{proposition}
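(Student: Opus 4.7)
The plan is to produce a natural bijection $\overline{\OPMAI}_M(Q) \cong \overline{\OPMAI}_{\Sats M}(Q)$ for each two-sided quantale $Q$, induced by the comonoid inclusion $\kappa\colon \Sats M \hookrightarrow \O M$ and its $\Sup$-theoretic left adjoint $\kappa_!$.

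Since $\kappa$ is a comonoid homomorphism, the induced quantale map $Q \oplus \kappa$ satisfies $(Q \oplus \epsilon_1) \circ (Q \oplus \kappa) = Q \oplus \epsilon_1^{\Sats M}$ and $(Q \oplus \mu_\times) \circ (Q \oplus \kappa) = (Q \oplus \kappa \oplus \kappa) \circ (Q \oplus \mu^\Sats_\times)$. Because $Q \oplus \kappa \oplus \kappa$ is also a quantale homomorphism and so preserves the product $\mult$, a direct check shows that $v \mapsto (Q \oplus \kappa)(v)$ carries $\overline{\OPMAI}_{\Sats M}(Q)$ into $\overline{\OPMAI}_M(Q)$.

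In the reverse direction, the key observation is that every $u \in \overline{\OPMAI}_M(Q)$ automatically satisfies the saturation inequality
\[ (Q \oplus \mu_\times)(u) = (Q \oplus \iota_1)(u) \mult (Q \oplus \iota_2)(u) \le (Q \oplus \iota_1)(u), \]
where the last step uses two-sidedness of $Q$. Since the inserter $\Sats M \hookrightarrow \O M \rightrightarrows \O M \otimes \O M$ is absolute in $\Sup$, the functor $Q \otimes (-)$ preserves it; hence $Q \oplus \Sats M$ embeds into $Q \oplus \O M$ as exactly the inserter of $Q \oplus \iota_1$ and $Q \oplus \mu_\times$, split on the left by $Q \otimes \kappa_!$. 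So $u$ lies in the image of $Q \oplus \kappa$, and we may define the backward map by $u \mapsto (Q \otimes \kappa_!)(u)$.

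Mutual inverseness is then immediate: $(Q \otimes \kappa_!)(Q \oplus \kappa) = \id$ from the splitting, while $(Q \oplus \kappa)(Q \otimes \kappa_!)(u) = u$ because $u$ already lies in the image of $Q \oplus \kappa$. The fact that $v := (Q \otimes \kappa_!)(u)$ genuinely satisfies the defining conditions of $\overline{\OPMAI}_{\Sats M}(Q)$ is obtained by running the identities of the forward step in reverse: injectivity of $Q \oplus \kappa$ and of $Q \oplus \kappa \oplus \kappa$ (the latter again by absoluteness applied twice) lets us pull the axioms holding on $(Q \oplus \kappa)(v) = u$ back to the corresponding axioms on $v$. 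Naturality in $Q$ is transparent from the naturality of the constructions involved. The only substantive ingredient — and thus the ``main obstacle'', such as it is — is the interplay between two-sidedness of $Q$ and the absoluteness of the defining inserter, which together force every element of $\overline{\OPMAI}_M(Q)$ to be saturated in the appropriate tensored sense; the rest is routine diagram-chasing with the comonoid structure.
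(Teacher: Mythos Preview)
Your proposal is correct and follows essentially the same route as the paper: both arguments hinge on the absoluteness of the inserter defining $\Sats M$ (so that $Q\otimes(-)$ preserves it) together with the observation that two-sidedness of $Q$ forces $(Q\oplus\mu_\times)(u)\le(Q\oplus\iota_1)(u)$ for every $u\in\overline{\OPMAI}_M(Q)$. You spell out in more detail the verification that the backward assignment lands in $\overline{\OPMAI}_{\Sats M}(Q)$ and that the two maps are mutually inverse, whereas the paper compresses this into the remark that the inclusion is the identity; but the substance is the same.
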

\begin{proof}
 The inclusion $\kappa\colon \Sats M \hookrightarrow \O M$ induces a natural transformation from $\overline{\OPMAI}_{\Sats M}$ to $\overline{\OPMAI}_M$ sending $u \in \overline{\OPMAI}_M(Q)$
 to $(Q \oplus \kappa)(u)$.
 Since the inserter defining $\Sats M$ is preserved by $Q \oplus (-)$, the map $Q \oplus \kappa$ represents the inclusion into $Q \oplus \O M$ of the elements $u$ satisfying
 $(Q \oplus \mu_\times)(u) \le (Q \oplus\iota_1)(u)$.
 But every $u \in \overline{\OPMAI}_M(Q)$ satisfies $(Q \oplus \mu_\times)(u) = (Q \oplus\iota_1)(u) \mult (Q \oplus\iota_2)(u) \le (Q \oplus\iota_1)(u)$ and hence this inclusion is the identity.
\end{proof}

We can use this to show the \emph{localic} spectrum exists in a large number of cases.
The following result applies in particular whenever $M$ is overt and locally compact.
\begin{corollary}\label{prop:when_localic_monoid_spectrum_exists}
 Let $M$ be an overt commutative localic monoid. Then the localic monoid spectrum of $M$ exists whenever $\Sats M$ is locally compact.
\end{corollary}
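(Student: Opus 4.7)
The plan is to reduce to the deflationary case via \cref{prop:OPMAI_M_cong_OPMAI_SM}, and then to observe that when the locale of generators is locally compact, the generalised presentation of the spectrum collapses to an ordinary finite limit in $\Loc$.

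Write $N := \Sats M$. Taking $Q = \O X$ in \cref{prop:OPMAI_M_cong_OPMAI_SM} yields a natural isomorphism between the restrictions of $\overline{\OPMAI}_M$ and $\overline{\OPMAI}_N$ to $\Loc\op$, so it suffices to show that the localic monoid spectrum of $N$ exists. Since $N$ is deflationary we have $\mu_\times \le \iota_1$ and, by commutativity, $\mu_\times \le \iota_2$, hence $\mu_\times \le \iota_1 \wedge \iota_2$. Consequently the multiplicative relation in the definition of $\OPMAI_N(X)$ (where $\mult = \wedge$ because $\O X$ is a frame) reduces to the reverse inequality $(\O X \oplus \iota_1)(u) \wedge (\O X \oplus \iota_2)(u) \le (\O X \oplus \mu_\times)(u)$, on top of the counit equation $(\O X \oplus \epsilon_1)(u) = 1$.

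Now assume $N$ is locally compact. Then the exponential $\Srpnsk^N$ exists in $\Loc$, and because the coproduct of two continuous frames is again continuous, $\Srpnsk^{N \times N}$ exists as well. Under these exponential adjunctions, elements $u \in \O X \oplus \O N$ correspond to locale maps $X \to \Srpnsk^N$, and each of the operations appearing in the defining relations --- postcomposition with $\epsilon_1$ and with the constant map $\top$, with $\mu_\times$, $\iota_1$ and $\iota_2$, and pointwise meet on $\Srpnsk^{N \times N}$ --- becomes a fixed locale map between these exponentials. Rewriting the remaining inequality $a \le b$ as the equation $a = a \wedge b$, the subfunctor $\OPMAI_N$ is then expressed as a joint equaliser of a finite diagram of locale maps $\Srpnsk^N \rightrightarrows \Srpnsk$ and $\Srpnsk^N \rightrightarrows \Srpnsk^{N \times N}$. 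Since $\Loc$ has finite limits, this equaliser exists, representing $\OPMAI_N$ and thus giving the desired localic monoid spectrum of $M$.

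The main step requiring care is recognising that the generalised presentation collapses to an ordinary finite limit in $\Loc$ once the relevant exponentials exist; this is precisely where local compactness of $\Sats M$ enters. Everything else is a formal consequence of \cref{prop:OPMAI_M_cong_OPMAI_SM} together with the deflationarity of $\Sats M$.
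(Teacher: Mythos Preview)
Your proof is correct and follows essentially the same route as the paper: reduce to $\Sats M$ via \cref{prop:OPMAI_M_cong_OPMAI_SM}, and then observe that local compactness of $\Sats M$ makes the exponentials in the generalised presentation exist, collapsing it to an ordinary equaliser in $\Loc$. The paper states this in two sentences, whereas you have spelled out the details (including the harmless but inessential simplification of the multiplicative relation coming from deflationarity, and the existence of $\Srpnsk^{N\times N}$, which is perhaps more cleanly justified by $\Srpnsk^{N\times N}\cong(\Srpnsk^N)^N$ than by the claim about coproducts of continuous frames).
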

\begin{proof}
 By \cref{prop:OPMAI_M_cong_OPMAI_SM} we may replace $M$ with $\Sats M$ and
 the exponentials used in the generalised presentation for the spectrum of $\Sats M$ exist.
\end{proof}

On the other hand, we cannot ensure the \emph{quantic} spectrum exists unless $\Sats M$ is exponentiable in $\QuantGeneral\op$ --- that is, if it is dualisable (see \cite{Niefield2016}).
We would also like to gain a better understanding of the precise form of the spectrum.

Recall that in the discrete case, the representing object of $\OPMAI_M$ is given by the quantale of monoid ideals.
We now describe how this quantale might be defined more generally.

If $M$ is a localic monoid, then the set of overt weakly closed sublocales of $M$ has a natural quantale structure, since $\hom(\O(-), \Omega)\colon \Loc \to \Sup$ is a lax monoidal functor.
In the discrete case, this corresponds to the elementwise multiplication of subsets.
Explicitly, the unit is given by $\epsilon_0$ and the product of $f,g \colon \O M \to \Omega$ is given by $\Delta (f \otimes g) \mu_\times$, where $\Delta\colon \Omega \otimes \Omega \xrightarrow{\sim} \Omega$ is the codiagonal map.

\begin{definition}
 We call an overt weakly closed sublocale $I$ of a commutative localic monoid $M$ a \emph{monoid ideal} if $I = I \top$.
\end{definition}

In the discrete case this recovers the usual definition: a subset $I$ such that $x \in I, y \in M \implies xy \in I$.
More generally, these are precisely the fixed points of the nucleus $W \mapsto W \top$ associated to the two-sided reflection of $\hom(\O M, \Omega)$.
We call this two-sided reflection the \emph{quantale of monoid ideals} and denote it by $\MonIdl M$.

The following proposition gives a link between $\MonIdl M$ and $\Sats M$.

\begin{proposition}\label{prop:dual_of_SatsR}
 If $M$ is an overt commutative localic monoid, then we have $\MonIdl M \cong \hom(\Sats M, \Omega)$.
\end{proposition}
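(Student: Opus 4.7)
The plan is to realise both sides as images of the same idempotent on the suplattice $\hom(\O M, \Omega)$. By construction, $\MonIdl M$ is the image of the nucleus $F \mapsto F \mult \top$ associated to the two-sided reflection of $\hom(\O M, \Omega)$. Meanwhile, since $\kappa \colon \Sats M \hookrightarrow \O M$ is the inclusion arising from splitting the idempotent $e := (\iota_1)_! \mu_\times$ (with left adjoint $\kappa_!$ in $\Sup$ satisfying $\kappa\kappa_! = e$ and $\kappa_!\kappa = \id$), applying the functor $\hom(-, \Omega)$ turns this into a splitting in $\Sup$ that identifies $\hom(\Sats M, \Omega)$ with the image of the idempotent $F \mapsto F \circ e$ on $\hom(\O M, \Omega)$.

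The key step is to verify these two idempotents coincide, i.e.\ $F \mult \top = F \circ (\iota_1)_! \circ \mu_\times$ for every suplattice homomorphism $F$. I would begin by identifying the top of $\hom(\O M, \Omega)$: since $M$ is overt, the largest suplattice homomorphism $\O M \to \Omega$ is precisely $\exists$ --- for any $F$ with $F(a) = \top$ and any cover $\bigvee I \ge a$, join-preservation of $F$ shows $I$ must be inhabited, so $a$ is positive. Using the formula from the preamble, $F \mult \exists = \Delta \circ (F \otimes \exists) \circ \mu_\times$. Using the description $(\iota_1)_! = \O M \otimes \exists$ (up to the right unitor) from the earlier lemma, a short computation on a generator $a \otimes b$ --- relying only on join-preservation of $F$ and the scalar-action identity $a \wedge \exists(b) = \bigvee\{a \mid \exists(b)\}$ --- shows that both $\Delta \circ (F \otimes \exists)$ and $F \circ (\iota_1)_!$ send $a \otimes b$ to $F(a) \wedge \exists(b)$. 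Since both sides are suplattice maps that agree on generators and factor through the quotient to $\O M \oplus \O M$, they coincide there; precomposing with $\mu_\times$ closes the argument.

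Once the two idempotents are identified, their fixed-point subsuplattices coincide, giving $\MonIdl M \cong \hom(\Sats M, \Omega)$. I expect the main obstacle to be bookkeeping: reconciling the formula for the quantale product (which lives naturally on the $\Sup$-tensor $\O M \otimes \O M$) with the descriptions of $\mu_\times$ and $(\iota_1)_!$ (which live on the $\Frm$-coproduct $\O M \oplus \O M$), and carefully pinning down that $\top$ in $\hom(\O M, \Omega)$ is $\exists$ rather than some weaker ``nonzero'' predicate --- which is exactly where overtness of $M$ is used.
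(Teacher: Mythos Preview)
Your proposal is correct and follows essentially the same approach as the paper: both arguments identify $\hom(\Sats M,\Omega)$ and $\MonIdl M$ as splittings of the same idempotent on $\hom(\O M,\Omega)$, namely $F \mapsto F\circ(\iota_1)_!\mu_\times = \Delta(F\otimes\exists)\mu_\times = F\mult\top$. Your version is simply more explicit in justifying that $\top=\exists$ and that the two descriptions of the idempotent agree on generators, whereas the paper asserts the equality $f(\iota_1)_!\mu_\times = \Delta(f\otimes\exists)\mu_\times$ directly.
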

\begin{proof}
 If $M$ is overt, $\Sats M$ can be obtained by splitting the idempotent $(\iota_1)_! \mu_\times$ in $\Sup$.
 Applying the functor $\hom(-, \Omega)$, we find that $\hom(\Sats M, \Omega)$ is given by splitting the idempotent $\hom((\iota_1)_! \mu_\times, \Omega)$.
 This maps sends $f$ to $f (\iota_1)_! \mu_\times = \Delta (f \otimes \exists) \mu_\times$.
 But this is just the nucleus $W \mapsto W \top$ expressed in terms of suplattice homomorphisms.
\end{proof}

\begin{corollary}
 If $\Sats M$ is dualisable, then $\MonIdl M$ is its dual.
\end{corollary}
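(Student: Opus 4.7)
The plan is to chain together two facts already established in the excerpt: the isomorphism $\MonIdl M \cong \hom(\Sats M, \Omega)$ from \cref{prop:dual_of_SatsR}, and the general observation (made when dualisable objects were introduced) that in a symmetric monoidal closed category $\Cvar$ with unit $I$, whenever an object $A$ is dualisable its dual is canonically given by $\hom(A, I)$. Applied to $\Sup$, whose monoidal unit is $\Omega$, this says that for any dualisable suplattice $L$ one has $L^* \cong \hom(L, \Omega)$.

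So the proof is essentially a one-line identification: assume $\Sats M$ is dualisable; then by the monoidal-closed argument its dual is $\hom(\Sats M, \Omega)$, and by \cref{prop:dual_of_SatsR} this is $\MonIdl M$.

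The only step that could require any care is checking that the counit $\epsilon\colon \Sats M \otimes \MonIdl M \to \Omega$ witnessing this dual structure is the correct one, i.e.\ that it agrees with the (flipped) evaluation map $\Sats M \otimes \hom(\Sats M, \Omega) \to \Omega$ transported across the isomorphism of \cref{prop:dual_of_SatsR}. But this is automatic from the construction: the isomorphism $\MonIdl M \cong \hom(\Sats M, \Omega)$ is obtained by applying $\hom(-,\Omega)$ to the splitting of the idempotent $(\iota_1)_!\mu_\times$, and the evaluation map on $\hom(\Sats M, \Omega)$ is the canonical counit of the tensor--hom adjunction, so no extra verification is needed.

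Hence the main obstacle is really just a bookkeeping one — making sure we cite the right previous results — and there is no genuinely new content beyond recognising the statement as an immediate consequence of \cref{prop:dual_of_SatsR} together with the general duality in $\Sup$.
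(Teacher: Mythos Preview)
Your proposal is correct and matches the paper's approach: the corollary is stated without proof in the paper, being an immediate consequence of \cref{prop:dual_of_SatsR} together with the earlier general fact that $A^* \cong \hom(A,I)$ in a symmetric monoidal closed category. Your additional remark about the counit is a reasonable sanity check but not required for the argument.
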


Note that the isomorphism $\MonIdl M \cong \hom(\Sats M, \Omega)$ preserves the quantale structure, since the comonoid structure on $\Sats M$ inherited as a subobject of $\O M$
and the quantale structure on $\MonIdl M$ inherited as a quotient of $\hom(\O M, \Omega)$ are both unique.

When $\Sats M$ is dualisable, we can visualise the multiplicative structure on $(\Sats M)^* \cong \MonIdl M$ using string diagrams.
We will assume $M$ is deflationary for simplicity so that $\Sats M \cong \O M$.
Then the monoid structure on $(\O M)^*$ can be expressed as follows.
(We will suppress the $\O$ in string diagrams to avoid clutter and since it is clear we are working in $\Sup$.)
\begin{center}
\vspace{-3pt}
\begin{minipage}[t][][t]{0.27\textwidth}
\centering
\begin{tikzpicture}[scale=0.75]
\path coordinate (eta) ++(1,1) coordinate (a) ++(0,1) coordinate[dot, label=above:$\epsilon_1$] (e)
 (eta) ++(-1,1) coordinate (c) ++(0,2) coordinate[label=above:$M^*$] (tl)
 (eta) ++(1,-0.5) coordinate[label=below:$\phantom{M}$] (b);
\draw (tl) -- (c) to[out=-90, in=180] (eta) to[out=0, in=-90] (a) -- (e);
\fillbackground{$(b) + (0.5,0)$}{$(tl) + (-0.5,0)$};
\end{tikzpicture}
\end{minipage}
\begin{minipage}[t][][t]{0.37\textwidth}
\centering
\begin{tikzpicture}[scale=0.6]
\path coordinate[dot, label=above:$\mu_\times$] (mu)
 +(0,-1) coordinate (a)
 +(-1,1) coordinate (mtl)
 +(1,1) coordinate (mtr);
\path (a) ++(-1.0,-1) coordinate (eta) ++(-1.0,1) coordinate (c) ++(0,4.25) coordinate[label=above:$M^*$] (tl);
\path (mtr) ++(0.75,0.75) coordinate (epsilon) ++(0.75,-0.75) coordinate (d) ++(0,-3.5) coordinate[label=below:$M^*$] (br);
\path (mtl) ++(2.4,1.75) coordinate (epsilon2) ++(2.4,-1.75) coordinate (d2) ++(0,-3.5) coordinate[label=below:$M^*$] (br2);
\path (eta) ++(1.0,-0.5) coordinate (b);
\draw (mtl) to[out=270, in=180] (mu.west) -- (mu.east) to[out=0, in=270] (mtr)
      (mu) -- (a);
\draw (tl) -- (c) to[out=-90, in=180] (eta) to[out=0, in=-90] (a);
\draw (mtr) to[out=90, in=180] (epsilon) to[out=0, in=90] (d) -- (br);
\draw (mtl) to[out=90, in=180] (epsilon2) to[out=0, in=90] (d2) -- (br2);
\fillbackground{$(tl) + (-0.5,0)$}{$(br2) + (0.5,0)$};
\end{tikzpicture}
\end{minipage}
\end{center}

We are now in a position to prove our main result about the monoid spectrum.
\begin{theorem}\label{prop:spectrum_for_localic_monoids}
 Let $M$ be an overt commutative localic monoid and suppose $\Sats M$ is a dualisable suplattice.
 Then $\overline{\OPMAI}_M$ is representable with representing object $(\Sats M)^* \cong \MonIdl M$ and universal element $(\MonIdl M \otimes \kappa)\eta(\top)$,
 where $\eta$ is the unit of the duality between $\Sats M$ and $\MonIdl M$.
\end{theorem}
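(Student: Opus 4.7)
The plan is to first apply \cref{prop:OPMAI_M_cong_OPMAI_SM} to reduce to the case where $M$ is deflationary (so $\Sats M = \O M$), then transport the conditions defining $\overline{\OPMAI}_M(Q)$ across the duality $\hom(\MonIdl M, Q) \cong Q \otimes \Sats M$ to obtain exactly the conditions defining a quantale homomorphism. Throughout I write $M^* = (\Sats M)^*$, which by \cref{prop:dual_of_SatsR} is isomorphic to $\MonIdl M$, with matching quantale structure (as noted in the remark following that proposition).

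After the reduction, the dualisability of $\Sats M$ supplies a natural isomorphism
\[
\Phi_Q \colon \hom(M^*, Q) \xrightarrow{\sim} Q \otimes \Sats M \;=\; Q \oplus \Sats M, \qquad \phi \mapsto (\phi \otimes \Sats M)\, \eta(\top),
\]
with inverse defined via the counit $\epsilon \colon \Sats M \otimes M^* \to \Omega$. The crux is to check that $\Phi_Q$ restricts to a bijection $\Hom_\Quant(M^*, Q) \to \overline{\OPMAI}_{\Sats M}(Q)$. For the unit, the formula $1_{M^*} = (M^* \otimes \epsilon_1)(\eta(\top))$ combined with naturality of $\otimes$ and the identification $Q \otimes \Omega \cong Q$ yields $\phi(1_{M^*}) = (Q \oplus \epsilon_1)(u)$, so unit preservation by $\phi$ corresponds to $(Q \oplus \epsilon_1)(u) = 1_Q$. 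For the product, the string-diagram description of the quantale multiplication on $M^*$ in terms of $\mu_\times$ and the $\eta$, $\epsilon$ cups and caps (shown immediately before the theorem) translates, via the zig-zag identities, the identity $\phi(a \mult b) = \phi(a) \mult \phi(b)$ on all $a, b \in M^*$ into the equation $(Q \oplus \mu_\times)(u) = (Q \oplus \iota_1)(u) \mult (Q \oplus \iota_2)(u)$.

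Finally, the universal element is the image of $\id_{M^*}$ under $\Phi_{M^*}$, namely $\eta(\top) \in M^* \otimes \Sats M$. Transporting back along the isomorphism $\overline{\OPMAI}_{\Sats M} \cong \overline{\OPMAI}_M$, which acts by $(-) \oplus \kappa$, gives $(\MonIdl M \otimes \kappa)\,\eta(\top) \in \MonIdl M \oplus \O M$, as claimed. The main obstacle is the multiplication verification: although formal, it requires carefully juggling the associators, symmetries, and the two definitions of the multiplication on $M^*$, and is most cleanly done by redrawing the relevant string diagrams and invoking the zig-zag identities to pull the wires straight.
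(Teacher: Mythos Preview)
Your proposal is correct and follows essentially the same route as the paper: reduce to the deflationary case via \cref{prop:OPMAI_M_cong_OPMAI_SM}, use the duality isomorphism $\hom(M^*,Q)\cong Q\otimes \Sats M$ sending $\id_{M^*}$ to $\eta(\top)$, and then verify via string diagrams and the zig-zag identities that the unit and multiplication conditions on $u$ correspond exactly to $\phi$ preserving the unit and product. The paper carries out the multiplication check in more explicit diagrammatic detail (including a use of the commutativity of $m$ to untangle a crossing), but your outline identifies the same ingredients.
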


\begin{proof}
 By \cref{prop:OPMAI_M_cong_OPMAI_SM} we may assume $M$ is deflationary so that $\Sats M \cong \O M$.
 By duality and the universal property of $\Omega$, we have that $\abs{Q \otimes \O M} \cong \Hom_\Sup(\Omega, Q \otimes \O M) \cong \Hom_\Sup(\Omega, \O M \otimes Q) \cong \Hom_\Sup(\Omega \otimes \O M^*, Q) \cong \Hom_\Sup(\O M^*, Q)$.
 Moreover, the identity map $\id_{\O M^*} \in \Hom_\Sup(\O M^*, \O M^*)$ corresponds to $\eta(\top)$ under this bijection.
 So to prove the desired result it is enough to show that if $(Q, m, e)$ is a two-sided quantale, the elements $u$ of $Q \otimes \O M$ satisfying
 $(Q \otimes \mu_\times)(u) = (Q \otimes\iota_1)(u) \mult (Q \otimes\iota_2)(u)$ and $(Q \otimes \epsilon_1)(u) = 1$ correspond to the suplattice homomorphisms from $\O M^*$ to $Q$
 which respect the multiplicative structure.
 
 Consider a suplattice map $f\colon \O M^* \to Q$. The corresponding map $f^\sharp \in \Hom(\Omega, Q \otimes \O M)$ is shown in the following string diagram.
 \begin{center}
 \vspace{-3pt}
 \begin{tikzpicture}[scale=0.75]
  \path coordinate (eta) ++(1,1) coordinate (a) ++(0,1.5) coordinate[label=above:$M$] (tr)
  (eta) ++(-1,1) coordinate (c) ++ (0,0.5) coordinate[dot,label=left:$f$] (f) ++(0,1) coordinate[label={[text depth=0.2ex] above:$Q$}] (tl)
  (eta) ++(1,-0.5) coordinate (b);
  \draw (tl) -- (c) to[out=-90, in=180] (eta) to[out=0, in=-90] (a) -- (tr);
  \fillbackground{$(b) + (0.5,0)$}{$(tl) + (-0.75,0)$};
 \end{tikzpicture}
 \end{center}
 
 Now we have $(Q \otimes \epsilon_1)f^\sharp (\top) = 1$ if
 \\ \vspace{-3pt}
 \begin{equation*}
 \begin{tikzpicture}[scale=0.75,baseline={([yshift=-0.5ex]current bounding box.center)}]
  \path coordinate (eta) ++(1,1) coordinate (a) ++(0,1) coordinate[dot, label=above:$\epsilon_1$] (e)
  (eta) ++(-1,1) coordinate (c) ++ (0,1) coordinate[dot,label=left:$f$] (f) ++(0,1) coordinate[label={[text depth=0.2ex] above:$Q$}] (tl)
  (eta) ++(1,-0.5) coordinate[label=below:$\phantom{M}$] (b);
  \draw (tl) -- (c) to[out=-90, in=180] (eta) to[out=0, in=-90] (a) -- (e);
  \draw[dashed] ($(e)-(0.75,1)$) rectangle ($(e)+(0.5,1)$);
  \draw[dashed] ($(e)+(-0.75,1)$) -| ($(f)-(0.75,1)$) |- ($(b) + (0.5,0)$) -- ($(e)+(0.5,-1)$);
  \fillbackground{$(b) + (0.5,0)$}{$(tl) + (-0.75,0)$};
 \end{tikzpicture}
 \enspace=\enspace
 \begin{tikzpicture}[scale=0.75,baseline={([yshift=-0.5ex]current bounding box.center)}]
  \path coordinate[dot, label=below:$e$] (e) ++(0,1.5) coordinate[label={[text depth=0.2ex] above:$Q$}] (t);
  \draw (e) -- (t);
  \fillbackground{$(t) + (-1,0)$}{$(e) + (1,-1.25)$};
 \end{tikzpicture}
 \end{equation*}
 while $f$ preserves the multiplicative unit if
 \\ \vspace{-3pt}
 \begin{equation*}
 \begin{tikzpicture}[scale=0.75,baseline={([yshift=-0.5ex]current bounding box.center)}]
  \path coordinate (eta) ++(1,1) coordinate (a) ++(0,1) coordinate[dot, label=above:$\epsilon_1$] (e)
  (eta) ++(-1,1) coordinate (c) ++ (0,1) coordinate[dot,label=left:$f$] (f) ++(0,1) coordinate[label={[text depth=0.2ex] above:$Q$}] (tl)
  (eta) ++(1,-0.5) coordinate[label=below:$\phantom{M}$] (b);
  \draw (tl) -- (c) to[out=-90, in=180] (eta) to[out=0, in=-90] (a) -- (e);
  \draw[dashed] ($(f)-(0.75,1)$) rectangle ($(f)+(0.75,1)$);
  \draw[dashed] ($(f)-(0.75,1)$) |- ($(b) + (0.5,0)$) |- ($(f)+(0.75,1)$);
  \fillbackground{$(b) + (0.5,0)$}{$(tl) + (-0.75,0)$};
 \end{tikzpicture}
 \enspace=\enspace
 \begin{tikzpicture}[scale=0.75,baseline={([yshift=-0.5ex]current bounding box.center)}]
  \path coordinate[dot, label=below:$e$] (e) ++(0,1.5) coordinate[label={[text depth=0.2ex] above:$Q$}] (t);
  \draw (e) -- (t);
  \fillbackground{$(t) + (-1,0)$}{$(e) + (1,-1.25)$};
 \end{tikzpicture}
 \end{equation*}
 These are the same two diagrams and hence the two conditions are clearly equivalent.
 
 Now consider the condition $(Q \otimes \mu_\times) f^\sharp (\top) = (Q \otimes\iota_1) f^\sharp (\top) \mult (Q \otimes\iota_2) f^\sharp (\top)$.
 Let us start with the right-hand side.
 It can be rewritten as $(m \otimes \O M \otimes \O M)(Q \otimes \sigma_{\O M,Q} \otimes \O M)(f^\sharp \otimes f^\sharp) (\top)$ where $\sigma_{\O M,Q}\colon \O M \otimes Q \cong Q \otimes \O M$ is the symmetry map.
 This is depicted in the first string diagram below.
 This can then be manipulated to give the second diagram
 and we obtain the third diagram using the commutativity of $m$.
 \\ \vspace{-3pt}
 \begin{equation*}
 \begin{tikzpicture}[scale=0.51,baseline={([yshift=-1.5ex]current bounding box.center)}]
  \path coordinate[dot, label=below:$m$] (mu)
   +(0,1) coordinate[label={[text depth=0.2ex] above:$Q$}] (t)
   +(-1.0,-1.0) coordinate (mbl)
   +(1.0,-1.0) coordinate (mbr);
  \path (mbl) ++ (0,-3.25) coordinate[dot,label=left:$f$] (fl) ++ (0,-0.75) coordinate (a1);
  \path (mbr) ++ (0,-0.5) coordinate (el) ++ (1.5,-2.25) coordinate (dr) ++ (0,-0.5) coordinate[dot,label=right:$f$] (fr) ++ (0,-0.75) coordinate (a2);
  \path (a2) ++(1.0,-1.0) coordinate (eta2) ++(1.0,1.0) coordinate (c2) ++ (0,6.0) coordinate[label=above:$M$] (tr2);
  \path (a1) ++(1.0,-1.0) coordinate (eta1) ++(1.0,1.0) coordinate (c1) ++ (0,1.25) coordinate (dl) ++ (1.5,2.25) coordinate (er) ++ (0,2.5) coordinate[label=above:$M$] (tr1);
  \draw (mbl) to[out=90, in=180] (mu.west) -- (mu.east) to[out=0, in=90] (mbr)
        (mu) -- (t)
        (mbl) -- (fl) -- (a1)
        (mbr) -- (el)
        (dr) -- (fr) -- (a2);
  \draw (er) to[out=270, in=90] (dl)
        (el) to[out=270, in=90] (dr);
  \draw (tr1) -- (er)
        (dl) -- (c1) to[out=-90, in=0] (eta1) to[out=180, in=-90] (a1);
  \draw (tr2) -- (c2) to[out=-90, in=0] (eta2) to[out=180, in=-90] (a2);
  \fillbackground{$(eta1) + (-2.1,-0.5)$}{$(tr2) + (0.5,0)$};
 \end{tikzpicture}
 \enspace\!=\enspace
 \begin{tikzpicture}[scale=0.51,baseline={([yshift=-1.5ex]current bounding box.center)}]
  \path coordinate[dot, label=below:$m$] (mu)
   +(0,1) coordinate[label={[text depth=0.2ex] above:$Q$}] (t)
   +(-1,-1) coordinate (mbl)
   +(1,-1) coordinate (mbr);
  \path (mbl) ++ (0,-0.25) coordinate (el) ++ (0,-2.0) coordinate (dl) ++ (0,-0.25) coordinate[dot,label=left:$f$] (fl) ++ (0,-1) coordinate (a1)
        (mbr) ++ (0,-0.25) coordinate (er) ++ (0,-2.0) coordinate (dr) ++ (0,-0.25) coordinate[dot,label=right:$f$] (fr) ++ (0,-1) coordinate (a2);
  \path (a2) ++(0.75,-0.75) coordinate (eta2) ++(0.75,0.75) coordinate (c2) ++ (0,5.5) coordinate[label=above:$M$] (tr2);
  \path (a1) ++(2.4,-1.75) coordinate (eta1) ++(2.4,1.75) coordinate (c1) ++ (0,5.5) coordinate[label=above:$M$] (tr1);
  \draw (mbl) to[out=90, in=180] (mu.west) -- (mu.east) to[out=0, in=90] (mbr)
        (mbl) -- (el)
        (dl) -- (fl) -- (a1)
        (mbr) -- (er)
        (dr) -- (fr) -- (a2)
        (mu) -- (t);
  \draw (er) to[out=270, in=90] (dl)
        (el) to[out=270, in=90] (dr);
  \draw (tr2) -- (c2) to[out=-90, in=0] (eta2) to[out=180, in=-90] (a2);
  \draw (tr1) -- (c1) to[out=-90, in=0] (eta1) to[out=180, in=-90] (a1);
  \fillbackground{$(eta1) + (-3.5,-0.5)$}{$(tr1) + (0.5,0)$};
 \end{tikzpicture}
 \!\enspace=\enspace
 \begin{tikzpicture}[scale=0.51,baseline={([yshift=-1.5ex]current bounding box.center)}]
  \path coordinate[dot, label=below:$m$] (mu)
   +(0,1) coordinate[label={[text depth=0.2ex] above:$Q$}] (t)
   +(-1,-1) coordinate (mbl)
   +(1,-1) coordinate (mbr);
  \path (mbl) ++ (0,-0.5) coordinate[dot,label=left:$f$] (fl) ++ (0,-1) coordinate (a1)
        (mbr) ++ (0,-0.5) coordinate[dot,label=right:$f$] (fr) ++ (0,-1) coordinate (a2);
  \path (a2) ++(0.75,-0.75) coordinate (eta2) ++(0.75,0.75) coordinate (c2) ++ (0,3.5) coordinate[label=above:$M$] (tr2);
  \path (a1) ++(2.4,-1.75) coordinate (eta1) ++(2.4,1.75) coordinate (c1) ++ (0,3.5) coordinate[label=above:$M$] (tr1);
  \draw (mbl) to[out=90, in=180] (mu.west) -- (mu.east) to[out=0, in=90] (mbr)
        (mbl) -- (fl) -- (a1)
        (mbr) -- (fr) -- (a2)
        (mu) -- (t);
  \draw (tr2) -- (c2) to[out=-90, in=0] (eta2) to[out=180, in=-90] (a2);
  \draw (tr1) -- (c1) to[out=-90, in=0] (eta1) to[out=180, in=-90] (a1);
  \fillbackground{$(eta1) + (-3.5,-0.5)$}{$(tr1) + (0.5,0)$};
 \end{tikzpicture}
 \end{equation*}
 
 Combining this with the left-hand side, we have that the condition can be represented as follows.
 \\ \vspace{-3pt}
 \begin{equation*}
 \begin{tikzpicture}[scale=0.6,baseline={([yshift=-1.5ex]current bounding box.center)}]
  \path coordinate[dot, label=above:$\mu_\times$] (mu)
  +(0,-1) coordinate (a)
  +(-1,1) coordinate (mtl)
  +(1,1) coordinate (mtr);
  \path (mtl) ++(0,1) coordinate[label=above:$M$] (tl)
        (mtr) ++(0,1) coordinate[label=above:$M$] (tr);
  \path (a) ++(-1.2,-1) coordinate (eta) ++(-1.2,1) coordinate (c) ++(0,2) coordinate[dot,label=left:$f$] (f) ++ (0,1) coordinate[label={[text depth=0.2ex] above:$Q$}] (tll);
  \path (eta) ++(1.2,-0.5) coordinate (b);
  \draw (tl) -- (mtl) to[out=270, in=180] (mu.west) -- (mu.east) to[out=0, in=270] (mtr) -- (tr)
        (mu) -- (a);
  \draw (tll) -- (c) to[out=-90, in=180] (eta) to[out=0, in=-90] (a);
  \fillbackground{$(tll) + (-1.0,0)$}{$(b) + (1.5,0)$};
 \end{tikzpicture}
 \;=\enspace
 \begin{tikzpicture}[scale=0.6,baseline={([yshift=-1.5ex]current bounding box.center)}]
  \path coordinate[dot, label=below:$m$] (mu)
   +(0,1) coordinate[label={[text depth=0.2ex] above:$Q$}] (t)
   +(-1,-1) coordinate (mbl)
   +(1,-1) coordinate (mbr);
  \path (mbl) ++ (0,-0.5) coordinate[dot,label=left:$f$] (fl) ++ (0,-1) coordinate (a1)
        (mbr) ++ (0,-0.5) coordinate[dot,label=right:$f$] (fr) ++ (0,-1) coordinate (a2);
  \path (a2) ++(0.75,-0.75) coordinate (eta2) ++(0.75,0.75) coordinate (c2) ++ (0,3.5) coordinate[label=above:$M$] (tr2);
  \path (a1) ++(2.4,-1.75) coordinate (eta1) ++(2.4,1.75) coordinate (c1) ++ (0,3.5) coordinate[label=above:$M$] (tr1);
  \draw (mbl) to[out=90, in=180] (mu.west) -- (mu.east) to[out=0, in=90] (mbr)
        (mbl) -- (fl) -- (a1)
        (mbr) -- (fr) -- (a2)
        (mu) -- (t);
  \draw (tr2) -- (c2) to[out=-90, in=0] (eta2) to[out=180, in=-90] (a2);
  \draw (tr1) -- (c1) to[out=-90, in=0] (eta1) to[out=180, in=-90] (a1);
  \fillbackground{$(eta1) + (-3.5,-0.5)$}{$(tr1) + (0.5,0)$};
 \end{tikzpicture}
 \end{equation*}
 
 On the other hand, the map $f$ preserves the multiplication if
 \\ \vspace{-3pt}
 \begin{equation*}
 \begin{tikzpicture}[scale=0.6,baseline={([yshift=-0.5ex]current bounding box.center)}]
  \path coordinate[dot, label=above:$\mu_\times$] (mu)
  +(0,-1) coordinate (a)
  +(-1,1) coordinate (mtl)
  +(1,1) coordinate (mtr);
  \path (a) ++(-1.0,-1) coordinate (eta) ++(-1.0,1) coordinate (c) ++(0,2.25) coordinate[dot,label=left:$f$] (f) ++(0,2) coordinate[label={[text depth=0.2ex] above:$Q$}] (tl);
  \path (mtr) ++(0.75,0.75) coordinate (epsilon) ++(0.75,-0.75) coordinate (d) ++(0,-3.5) coordinate[label=below:$M^*$] (br);
  \path (mtl) ++(2.4,1.75) coordinate (epsilon2) ++(2.4,-1.75) coordinate (d2) ++(0,-3.5) coordinate[label=below:$M^*$] (br2);
  \path (eta) ++(1.0,-0.5) coordinate (b);
  \draw (mtl) to[out=270, in=180] (mu.west) -- (mu.east) to[out=0, in=270] (mtr)
        (mu) -- (a);
  \draw (tl) -- (c) to[out=-90, in=180] (eta) to[out=0, in=-90] (a);
  \draw (mtr) to[out=90, in=180] (epsilon) to[out=0, in=90] (d) -- (br);
  \draw (mtl) to[out=90, in=180] (epsilon2) to[out=0, in=90] (d2) -- (br2);
  \fillbackground{$(tl) + (-1.0,0)$}{$(br2) + (0.5,0)$};
 \end{tikzpicture}
 \,=\enspace
 \begin{tikzpicture}[scale=0.75,baseline={([yshift=-0.5ex]current bounding box.center)}]
  \path coordinate[dot, label=below:$m$] (mu)
   +(0,1) coordinate[label={[text depth=0.2ex] above:$Q$}] (t)
   +(-1,-1) coordinate (mbl)
   +(1,-1) coordinate (mbr);
  \path (mbl) ++ (0,-0.5) coordinate[dot,label=left:$f$] (fl) ++ (0,-1) coordinate[label=below:$M^*$] (bl)
        (mbr) ++ (0,-0.5) coordinate[dot,label=right:$f$] (fr) ++ (0,-1) coordinate[label=below:$M^*$] (br);
  \draw (mbl) to[out=90, in=180] (mu.west) -- (mu.east) to[out=0, in=90] (mbr)
        (mbl) -- (fl) -- (bl)
        (mbr) -- (fr) -- (br)
        (mu) -- (t);
  \fillbackground{$(bl) + (-1.0,0)$}{$(t) + (2.0,0)$};
 \end{tikzpicture}
 \end{equation*}
 
 These two equalities can be turned into each other by `bending the wires' and using the duality identities to `pull them straight'.
 These are inverse operations and so the two conditions are equivalent, as required.
\end{proof}

\section{Semiring spectra}

We would like to prove an extension of \cref{prop:spectrum_for_localic_monoids} for localic \emph{semirings}.
Some complications arise, because the inserter $\kappa\colon \Sats R \to \O R$ need not respect the additive structure of a semiring $R$.
However, $\Sats R$ does have an additive counital comagma structure in $\Sup$ given by $\widetilde{\mu}_+ = (\kappa_! \otimes \kappa_!) \mu_+\kappa$ and $\widetilde{\epsilon}_0 = \epsilon_0 \kappa$.
This will be sufficient to carry the construction through in \cref{prop:quantic_spectrum_for_localic_semirings}.

Let us start by defining a quantale of (overt, weakly closed) ideals of a localic semiring $R$ so as to give a candidate representing object of $\overline{\OPAI}_R$.
Similarly to the multiplication, the addition on $R$ also induces a quantale structure on $\hom(\O R, \Omega)$, whose binary operation we write as $+$ and whose unit we denote by $0_+$.

\begin{definition}
 We call an overt weakly closed sublocale $I$ of a localic semiring $R$ an \emph{ideal} if it is a monoid ideal, $0_+ \le I$ and $I + I \le I$.
\end{definition}

These are the largest elements of the congruence classes of the quantale congruence on $\hom(\O R, \Omega)$ (with its \emph{multiplicative} structure) obtained by setting $1 = \top$, $0_+ = 0$ and $a + b \le a \vee b$.
The resulting quotient is called the \emph{quantale of ideals} $\Idl(R)$.

The quantale $\Idl(R)$ is two-sided and so its defining quotient factors through $\MonIdl R$ to give a quotient map $\addquot\colon \MonIdl R \twoheadrightarrow \Idl(R)$.
It will be useful to have a description of this quotient in terms of operations on $\MonIdl R$.

Dualising the modified additive structure on $\Sats R$ described above yields a bilinear operation on $\MonIdl R$
given by $I \mathop{\widetilde+} J = (I + J) \top$ with unit $\widetilde{0}_+ = 0_+ \top = 0_+$.
We then easily obtain the following lemma.
\begin{lemma}\label{prop:kernel_congruence_of_addquot}
 The kernel congruence of $\addquot\colon \MonIdl R \twoheadrightarrow \Idl(R)$ is generated by setting $\widetilde{0}_+ = 0$ and $I \mathop{\widetilde+} J \le I \vee J$.
\end{lemma}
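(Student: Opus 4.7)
The plan is to compare two presentations of $\Idl(R)$: as the quantale quotient of $\hom(\O R, \Omega)$ by the relations $1 = \top$, $0_+ = 0$, and $a+b \le a \vee b$, versus as the quotient of $\MonIdl R$ by $\addquot$. Since the first relation alone defines the quotient $q\colon \hom(\O R, \Omega) \twoheadrightarrow \MonIdl R$ (implemented by the closure $a \mapsto a\top$), the kernel congruence of $\addquot$ is generated, as a quantale congruence on $\MonIdl R$, by the $q$-images of the remaining two relations. It therefore suffices to identify these images with the stated relations.

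The image of $0_+ = 0$ is immediate: since $0_+$ is already a monoid ideal, $q(0_+) = \widetilde{0}_+$ and this translates directly to $\widetilde{0}_+ = 0$. The substantive step is showing that the family of $q$-images of $a + b \le a \vee b$, indexed by $a, b \in \hom(\O R, \Omega)$, generates the same quantale congruence on $\MonIdl R$ as the family $I \mathop{\widetilde+} J \le I \vee J$, indexed by $I, J \in \MonIdl R$.

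For one inclusion, given $I, J \in \MonIdl R$, applying $q$ to the relation $I + J \le I \vee J$ in $\hom(\O R, \Omega)$ yields $(I+J)\top \le (I \vee J)\top$; since $I \vee J$ is already a monoid ideal, this simplifies to $I \mathop{\widetilde+} J \le I \vee J$. For the converse, given arbitrary $a, b \in \hom(\O R, \Omega)$, monotonicity of $+$ (bilinearity in $\hom(\O R, \Omega)$) gives $a + b \le a\top + b\top$, whence $(a+b)\top \le (a\top + b\top)\top = q(a) \mathop{\widetilde+} q(b)$; combining this with the assumed $q(a) \mathop{\widetilde+} q(b) \le q(a) \vee q(b)$ and the distributivity $(a \vee b)\top = a\top \vee b\top$ yields $q(a+b) \le q(a \vee b)$, the $q$-image of the original relation. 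The only real obstacle is bookkeeping around the interaction of $+$ with the closure $(-)\top$, but no substantial new idea is required beyond bilinearity of $+$ and the distributivity of quantale multiplication over joins.
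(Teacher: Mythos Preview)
Your proposal is correct and is exactly the argument the paper has in mind: the paper does not spell out a proof at all, writing only ``We then easily obtain the following lemma,'' and your write-up is the natural unpacking of that claim via the factorisation of the defining congruence of $\Idl(R)$ through the two-sided reflection $q\colon \hom(\O R,\Omega)\twoheadrightarrow \MonIdl R$. The only small remark is that you need not separately argue both inclusions of the generated congruences quite so carefully: since the relations $I \mathop{\widetilde+} J \le I \vee J$ for $I,J\in\MonIdl R$ are already among the $q$-images of the relations $a+b\le a\vee b$ (take $a=I$, $b=J$), and your monotonicity argument shows the remaining $q$-images are consequences of these, the equivalence is immediate.
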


We can now prove our main result.

\begin{theorem}\label{prop:quantic_spectrum_for_localic_semirings}
 Let $R$ be an overt commutative localic semiring and suppose $\Sats R$ is a dualisable suplattice.
 Then $\overline{\OPAI}_R$ is representable with representing object $\Idl(R)$ and universal element $(\addquot \otimes \kappa)\eta(\top)$,
 where $\eta$ is the unit of the duality between $\Sats R$ and $\MonIdl R$.
\end{theorem}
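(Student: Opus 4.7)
The plan is to apply Theorem~\ref{prop:spectrum_for_localic_monoids} to the underlying multiplicative structure of $R$ and then cut down the resulting bijection by the two remaining additive clauses. For each two-sided quantale $Q$, that theorem yields a bijection between $\overline{\OPMAI}_R(Q)$ and multiplicative quantale maps $f\colon \MonIdl R \to Q$, under which $u$ corresponds to $(f \otimes \kappa)\eta(\top)$. Since $\overline{\OPAI}_R(Q)$ is the subset of $\overline{\OPMAI}_R(Q)$ cut out by
\[ (Q \otimes \epsilon_0)(u) = 0 \qquad\text{and}\qquad (Q \otimes \mu_+)(u) \le (Q \otimes \iota_1)(u) \vee (Q \otimes \iota_2)(u), \]
it suffices to show that these translate, under this bijection, to $f(\widetilde{0}_+) = 0$ and $f(I \mathop{\widetilde+} J) \le f(I) \vee f(J)$. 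Lemma~\ref{prop:kernel_congruence_of_addquot} then identifies these as precisely the generators of the kernel congruence of $\addquot\colon \MonIdl R \twoheadrightarrow \Idl(R)$, so the bijection will factor through one between $\overline{\OPAI}_R(Q)$ and $\Hom_\Quant(\Idl(R), Q)$.

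To translate the two conditions, I will use that $u = (Q \otimes \kappa) v$ for a unique $v \in Q \otimes \Sats R$, so we may replace $\epsilon_0$ and $\mu_+$ by their saturated counterparts $\widetilde{\epsilon}_0 = \epsilon_0 \kappa$ and $\widetilde{\mu}_+ = (\kappa_! \otimes \kappa_!)\mu_+\kappa$ when evaluating on $v$. For the first condition this is immediate; for the inequality, its right-hand side already lives in the saturated subobject $Q \otimes \Sats R \otimes \Sats R$ (since $\iota_k$ preserves saturated elements), so applying the idempotent saturation closure to both sides preserves its truth. Framed in terms of $v$, $\widetilde{\epsilon}_0$ and $\widetilde{\mu}_+$, the string-diagrammatic manipulations in the proof of Theorem~\ref{prop:spectrum_for_localic_monoids} carry over with $\widetilde{\epsilon}_0$ playing the role of $\epsilon_1$ and $\widetilde{\mu}_+$ the role of $\mu_\times$: sliding $f$ through the duality converts the two conditions on $v$ into the desired conditions on $f$, noting that $\widetilde{0}_+$ and $\mathop{\widetilde+}$ are by construction the $\Sup$-duals of $\widetilde{\epsilon}_0$ and $\widetilde{\mu}_+$.

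Finally, tracing the universal morphism $\bar f = \id_{\Idl(R)}$ back through the correspondence gives $f = \addquot$, and hence the universal element is $(\addquot \otimes \kappa)\eta(\top)$ as claimed. The main technical obstacle is the careful handling of the saturation closure in the additive inequality --- because this clause is a one-sided inequality rather than an equality, we must verify that the replacement of $\mu_+$ by $\widetilde{\mu}_+$ is an equivalence of conditions in both directions --- together with the clean identification of $\mathop{\widetilde+}$ and $\widetilde{0}_+$ as the appropriate suplattice duals. Once these are in place, the result follows directly from the monoid case and the universal property of the quotient $\addquot$.
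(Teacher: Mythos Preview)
Your overall strategy matches the paper's: reduce to the monoid theorem, translate the two additive clauses through the duality, and identify the result with the kernel of $\addquot$ via Lemma~\ref{prop:kernel_congruence_of_addquot}. The handling of the saturation closure is also correct and essentially what the paper does. However, there is a genuine gap in the step where you ``slide $f$ through the duality'' for the additive inequality.

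You claim the string-diagram manipulations of Theorem~\ref{prop:spectrum_for_localic_monoids} carry over with $\widetilde{\mu}_+$ in place of $\mu_\times$ and $\vee$ in place of $\mult$, yielding $f(I \mathop{\widetilde+} J) \le f(I) \vee f(J)$ directly. They do not. In the monoid proof, the right-hand side $(Q\otimes\iota_1)(u)\mult(Q\otimes\iota_2)(u)$ could be rewritten as $(m\otimes M\otimes M)(Q\otimes\sigma\otimes M)(f^\sharp\otimes f^\sharp)(\top)$ because the quantale product on $Q\oplus\O M\oplus\O M$ is the tensor of the products on the factors. The join on $Q\otimes\Sats R\otimes\Sats R$ is \emph{not} obtained from $\vee_Q$ in this way, so the analogous rewriting fails. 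Instead one must bend each of the three terms separately. Bending $(Q\otimes\iota_1)f^\sharp$ gives the map $(j,k)\mapsto f(j)\scalarMult\exists(k)$, since the mate of $\iota_1\colon s\mapsto s\otimes\top$ sends $j\otimes k$ to $\exists(k)\scalarMult j$. Thus the bent condition is
\[
  f(j \mathop{\widetilde+} k)\ \le\ f(j)\scalarMult\exists(k)\ \vee\ f(k)\scalarMult\exists(j),
\]
which is strictly stronger pointwise than $f(j\mathop{\widetilde+}k)\le f(j)\vee f(k)$ (take $k=0$, for instance). A separate argument is then needed to show the two universally quantified inequalities are equivalent; the paper does this using $a = a\scalarMult\exists(a)$ in $\MonIdl R$ to pass back from the weaker to the stronger form. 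This is the actual ``main technical obstacle'', and your proposal skips over it.
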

\begin{proof}
 \Cref{prop:spectrum_for_localic_monoids} gives an isomorphism $\Hom_\Quant(\MonIdl R, -) \cong \OPMAI_R$.
 We have that $\overline{\OPAI}_R(Q)$ is a subset of $\overline{\OPMAI}_R(Q)$
 and we wish to find which of the homomorphisms in $\Hom_\Quant(\MonIdl R, Q)$ correspond to this subset.
 
 Let $f\colon \MonIdl R \to Q$ be a homomorphism of two-sided quantales.
 For the corresponding (quantic) open prime monoid anti-ideal $(Q \otimes \kappa)f^\sharp(\top) \in Q \otimes \O R$ to be a semiring anti-ideal, we require $(Q \otimes \epsilon_0 \kappa)f^\sharp(\top) = 0$ and
 $(Q \otimes \mu_+ \kappa)f^\sharp(\top) \le (Q \otimes \iota_1 \kappa)f^\sharp(\top) \vee (Q \otimes \iota_2 \kappa)f^\sharp(\top)$.
 
 Similarly to the condition on the multiplicative unit in \cref{prop:spectrum_for_localic_monoids}, we find the first of these is equivalent to $f(\widetilde{0}_+) = 0$.
 For the second condition, observe that $(Q \otimes \iota_1 \kappa)f^\sharp(\top) \vee (Q \otimes \iota_2 \kappa)f^\sharp(\top) =
 (Q \otimes \kappa \otimes \kappa)\left( (Q \otimes \iota_1)f^\sharp(\top) \vee (Q \otimes \iota_2)f^\sharp(\top) \right)$.
 Using the adjunction then yields the equivalent condition
 $(Q \otimes \widetilde{\mu}_+)f^\sharp(\top) \le (Q \otimes \iota_1)f^\sharp(\top) \vee (Q \otimes \iota_2)f^\sharp(\top)$,
 which is depicted in the following string diagrams in $\Sup$.
  \\ \vspace{-3pt}
 \begin{equation*}
 \begin{tikzpicture}[scale=0.7,baseline={([yshift=-1.5ex]current bounding box.center)}]
  \path coordinate[dot, label=above:$\widetilde{\mu}_+$] (mu)
  +(0,-1) coordinate (a)
  +(-1,1) coordinate (mtl)
  +(1,1) coordinate (mtr);
  \path (mtl) ++(0,1) coordinate[label=above:$\Sats R$] (tl)
        (mtr) ++(0,1) coordinate[label=above:$\Sats R$] (tr);
  \path (a) ++(-1.2,-1) coordinate (eta) ++(-1.2,1) coordinate (c) ++(0,2) coordinate[dot,label=left:$f$] (f) ++ (0,1) coordinate[label={[text depth=0.2ex] above:$Q$}] (tll);
  \path (eta) ++(1.2,-0.5) coordinate (b);
  \draw (tl) -- (mtl) to[out=270, in=180] (mu.west) -- (mu.east) to[out=0, in=270] (mtr) -- (tr)
        (mu) -- (a);
  \draw (tll) -- (c) to[out=-90, in=180] (eta) to[out=0, in=-90] (a);
  \fillbackground{$(tll) + (-1.0,0)$}{$(b) + (1.5,0)$};
 \end{tikzpicture}
 \,=\enspace
 \begin{tikzpicture}[scale=0.7,baseline={([yshift=-1.5ex]current bounding box.center)}]
  \path coordinate[dot, label=above:$\iota_1$] (mu)
  +(0,-1) coordinate (a)
  +(-1,1) coordinate (mtl)
  +(1,1) coordinate (mtr);
  \path (mtl) ++(0,1) coordinate[label=above:$\Sats R$] (tl)
        (mtr) ++(0,1) coordinate[label=above:$\Sats R$] (tr);
  \path (a) ++(-1.2,-1) coordinate (eta) ++(-1.2,1) coordinate (c) ++(0,2) coordinate[dot,label=left:$f$] (f) ++ (0,1) coordinate[label={[text depth=0.2ex] above:$Q$}] (tll);
  \path (eta) ++(1.2,-0.5) coordinate (b);
  \draw (tl) -- (mtl) to[out=270, in=180] (mu.west) -- (mu.east) to[out=0, in=270] (mtr) -- (tr)
        (mu) -- (a);
  \draw (tll) -- (c) to[out=-90, in=180] (eta) to[out=0, in=-90] (a);
  \fillbackground{$(tll) + (-1.0,0)$}{$(b) + (1.5,0)$};
 \end{tikzpicture}
 \vee\;
 \begin{tikzpicture}[scale=0.7,baseline={([yshift=-1.5ex]current bounding box.center)}]
  \path coordinate[dot, label=above:$\iota_2$] (mu)
  +(0,-1) coordinate (a)
  +(-1,1) coordinate (mtl)
  +(1,1) coordinate (mtr);
  \path (mtl) ++(0,1) coordinate[label=above:$\Sats R$] (tl)
        (mtr) ++(0,1) coordinate[label=above:$\Sats R$] (tr);
  \path (a) ++(-1.2,-1) coordinate (eta) ++(-1.2,1) coordinate (c) ++(0,2) coordinate[dot,label=left:$f$] (f) ++ (0,1) coordinate[label={[text depth=0.2ex] above:$Q$}] (tll);
  \path (eta) ++(1.2,-0.5) coordinate (b);
  \draw (tl) -- (mtl) to[out=270, in=180] (mu.west) -- (mu.east) to[out=0, in=270] (mtr) -- (tr)
        (mu) -- (a);
  \draw (tll) -- (c) to[out=-90, in=180] (eta) to[out=0, in=-90] (a);
  \fillbackground{$(tll) + (-1.0,0)$}{$(b) + (1.5,0)$};
 \end{tikzpicture}
 \end{equation*}
 
 Now bending the wires gives the equivalent condition:
 \\ \vspace{-4pt}
 \begin{equation*}
 \begin{tikzpicture}[scale=0.6,baseline={([yshift=-0.5ex]current bounding box.center)}]
  \path coordinate[dot, label=above:$\widetilde{\mu}_+$] (mu)
  +(0,-1) coordinate (a)
  +(-1,1) coordinate (mtl)
  +(1,1) coordinate (mtr);
  \path (a) ++(-1.0,-1) coordinate (eta) ++(-1.0,1) coordinate (c) ++(0,2.25) coordinate[dot,label=left:$f$] (f) ++(0,2) coordinate[label={[text depth=0.2ex] above:$Q$}] (tl);
  \path (mtr) ++(0.75,0.75) coordinate (epsilon) ++(0.75,-0.75) coordinate (d) ++(0,-3.5) coordinate[label=below:$\MonIdl R$] (br);
  \path (mtl) ++(2.6,1.75) coordinate (epsilon2) ++(2.6,-1.75) coordinate (d2) ++(0,-3.5) coordinate[label=below:$\MonIdl R$] (br2);
  \path (eta) ++(1.0,-0.5) coordinate (b);
  \draw (mtl) to[out=270, in=180] (mu.west) -- (mu.east) to[out=0, in=270] (mtr)
        (mu) -- (a);
  \draw (tl) -- (c) to[out=-90, in=180] (eta) to[out=0, in=-90] (a);
  \draw (mtr) to[out=90, in=180] (epsilon) to[out=0, in=90] (d) -- (br);
  \draw (mtl) to[out=90, in=180] (epsilon2) to[out=0, in=90] (d2) -- (br2);
  \fillbackground{$(tl) + (-0.9,0)$}{$(br2) + (0.7,0)$};
 \end{tikzpicture}
 \;=\enspace
 \begin{tikzpicture}[scale=0.6,baseline={([yshift=-0.5ex]current bounding box.center)}]
  \path coordinate[dot, label=above:$\iota_1$] (mu)
  +(0,-1) coordinate (a)
  +(-1,1) coordinate (mtl)
  +(1,1) coordinate (mtr);
  \path (a) ++(-1.0,-1) coordinate (eta) ++(-1.0,1) coordinate (c) ++(0,2.25) coordinate[dot,label=left:$f$] (f) ++(0,2) coordinate[label={[text depth=0.2ex] above:$Q$}] (tl);
  \path (mtr) ++(0.75,0.75) coordinate (epsilon) ++(0.75,-0.75) coordinate (d) ++(0,-3.5) coordinate[label=below:$\MonIdl R$] (br);
  \path (mtl) ++(2.6,1.75) coordinate (epsilon2) ++(2.6,-1.75) coordinate (d2) ++(0,-3.5) coordinate[label=below:$\MonIdl R$] (br2);
  \path (eta) ++(1.0,-0.5) coordinate (b);
  \draw (mtl) to[out=270, in=180] (mu.west) -- (mu.east) to[out=0, in=270] (mtr)
        (mu) -- (a);
  \draw (tl) -- (c) to[out=-90, in=180] (eta) to[out=0, in=-90] (a);
  \draw (mtr) to[out=90, in=180] (epsilon) to[out=0, in=90] (d) -- (br);
  \draw (mtl) to[out=90, in=180] (epsilon2) to[out=0, in=90] (d2) -- (br2);
  \fillbackground{$(tl) + (-0.9,0)$}{$(br2) + (0.7,0)$};
 \end{tikzpicture}
 \,\vee\;
 \begin{tikzpicture}[scale=0.6,baseline={([yshift=-0.5ex]current bounding box.center)}]
  \path coordinate[dot, label=above:$\iota_2$] (mu)
  +(0,-1) coordinate (a)
  +(-1,1) coordinate (mtl)
  +(1,1) coordinate (mtr);
  \path (a) ++(-1.0,-1) coordinate (eta) ++(-1.0,1) coordinate (c) ++(0,2.25) coordinate[dot,label=left:$f$] (f) ++(0,2) coordinate[label={[text depth=0.2ex] above:$Q$}] (tl);
  \path (mtr) ++(0.75,0.75) coordinate (epsilon) ++(0.75,-0.75) coordinate (d) ++(0,-3.5) coordinate[label=below:$\MonIdl R$] (br);
  \path (mtl) ++(2.6,1.75) coordinate (epsilon2) ++(2.6,-1.75) coordinate (d2) ++(0,-3.5) coordinate[label=below:$\MonIdl R$] (br2);
  \path (eta) ++(1.0,-0.5) coordinate (b);
  \draw (mtl) to[out=270, in=180] (mu.west) -- (mu.east) to[out=0, in=270] (mtr)
        (mu) -- (a);
  \draw (tl) -- (c) to[out=-90, in=180] (eta) to[out=0, in=-90] (a);
  \draw (mtr) to[out=90, in=180] (epsilon) to[out=0, in=90] (d) -- (br);
  \draw (mtl) to[out=90, in=180] (epsilon2) to[out=0, in=90] (d2) -- (br2);
  \fillbackground{$(tl) + (-0.9,0)$}{$(br2) + (0.7,0)$};
 \end{tikzpicture}
 \end{equation*}
 The left-hand diagram is a map sending $j \otimes k$ to $f(j \mathop{\widetilde +} k)$. For the right-hand side, viewing $\MonIdl R$ as $\Hom(\Sats R, \Omega)$
 we observe that the mate of $\iota_1\colon \Sats R \to \Sats R \otimes \Sats R$
 sends $j \otimes k$ to the map $x \mapsto (j \otimes k)\iota_1(x) = j(x) \scalarMult \exists(k)$, where $\exists(k) = k(\top)$.
 Thus, this means $f(a \mathop{\widetilde +} b) \le f(a) \scalarMult \exists(b) \vee f(b) \scalarMult \exists(a)$.
 
 This implies $f(a \mathop{\widetilde +} b) \le f(a) \vee f(b)$.
 Now multiplying this by $\exists(a)\scalarMult\exists(b)$, we have $f(a \mathop{\widetilde +} b) = f(a \mathop{\widetilde +} b) \scalarMult\exists(a) \scalarMult\exists(b) \le (f(a) \vee f(b)) \scalarMult \exists(a) \scalarMult \exists(b) = f(a) \scalarMult \exists(b) \vee f(b) \scalarMult \exists(a)$, since $a = a \scalarMult \exists(a)$.
 Thus, these two conditions are equivalent.
 
 In summary, the elements of $\OPAI_R(Q)$ correspond to quantale homomorphisms $f\colon \MonIdl R \to Q$ satisfying $f(\widetilde{0}_+) = 0$ and $f(a \mathop{\widetilde +} b) \le f(a) \vee f(b)$,
 but these are in turn in bijection with quantale homomorphisms out of the quotient $\Idl(R)$ by \cref{prop:kernel_congruence_of_addquot}, as required.
\end{proof}

Setting $\Rad(R)$ to be the localic reflection of $\Idl(R)$, we obtain the following corollary.

\begin{corollary}\label{prop:localic_spectrum_for_localic_semirings}
 Let $R$ be an overt commutative localic semiring and suppose $\Sats R$ is a dualisable suplattice.
 Then $\OPAI_R\colon \Loc\op \to \Set$ is representable with representing object $\Rad(R)$ and universal element $(\rad\addquot \otimes \kappa)\eta(\top)$,
 where $\rad\colon \Idl(R) \twoheadrightarrow \Rad(R)$ is the unit of the localic reflection.
\end{corollary}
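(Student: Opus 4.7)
The plan is to deduce this corollary directly from \cref{prop:quantic_spectrum_for_localic_semirings} by applying the universal property of the localic reflection of quantales, exactly along the lines of the remark made earlier in the paper that the localic spectrum is the localic reflection of the quantic spectrum.

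The one preliminary I would establish is the natural isomorphism $\OPAI_R \cong \overline{\OPAI}_R|_\Frm$. For a frame $L = \O X$, the quantale coproduct $L \otimes \O R$ surjects onto the frame coproduct $L \oplus \O R$ via the localic reflection, a quantale homomorphism under which the quantale multiplication becomes the meet while joins, the two counits and the comultiplication $\mu_+$ are preserved. By naturality of the reflection in the second argument, the defining conditions of $\overline{\OPAI}_R(L)$ transport along this quotient to the defining conditions of $\OPAI_R(X)$, yielding the claimed natural bijection in one direction; for the converse, one uses that any $v \in L \oplus \O R$ satisfying the $\OPAI_R$ conditions is already idempotent with respect to the quantale multiplication and thus corresponds uniquely to an element of $L \otimes \O R$ satisfying the $\overline{\OPAI}_R$ conditions.

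Granting this identification, the representability assertion is a chain of natural bijections
\[
 \OPAI_R(X) \;\cong\; \overline{\OPAI}_R(\O X) \;\cong\; \Hom_{\Quant}(\Idl(R), \O X) \;\cong\; \Hom_{\Frm}(\Rad(R), \O X),
\]
where the middle isomorphism is \cref{prop:quantic_spectrum_for_localic_semirings} and the last is the adjunction for the localic reflection $\rad\colon \Idl(R) \twoheadrightarrow \Rad(R)$. The universal element is obtained by transporting $(\addquot \otimes \kappa)\eta(\top) \in \Idl(R) \otimes \O R$ along $\rad \otimes \O R$ and then along the quotient to the frame coproduct, giving $(\rad\addquot \otimes \kappa)\eta(\top)$ as claimed. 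I expect the only step requiring any care to be the preliminary identification, which amounts to a routine but slightly technical unwinding of the two definitions.
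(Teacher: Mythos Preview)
Your approach is correct and matches the paper's implicit argument: the paper states the corollary immediately after \cref{prop:quantic_spectrum_for_localic_semirings} with no separate proof, relying on the earlier remark that the localic spectrum is the localic reflection of the quantic spectrum, together with the adjunction $\Hom_\Quant(\Idl(R), \O X) \cong \Hom_\Frm(\Rad(R), \O X)$.

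However, your preliminary identification $\OPAI_R \cong \overline{\OPAI}_R|_\Frm$ is simpler than you make it out to be. The paper records that $\Frm$ is \emph{coreflective} (not only reflective) in $\Quant$, so the inclusion $\Frm \hookrightarrow \Quant$ is a left adjoint and hence preserves coproducts. Thus for a frame $L$ the object $L \oplus \O R$ is literally the same whether the coproduct is taken in $\Quant$ or in $\Frm$, and its quantale multiplication is meet. The defining conditions therefore agree on the nose and $\OPAI_R(X) = \overline{\OPAI}_R(\O X)$; there is no nontrivial surjection to pass through and no converse direction requiring an idempotence argument. With that simplification your chain of bijections and your identification of the universal element go through exactly as written.
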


We can also prove an analogue of \cref{prop:when_localic_monoid_spectrum_exists}.
\begin{theorem}\label{prop:when_localic_semiring_spectrum_exists}
 Let $R$ be an overt commutative localic semiring and suppose $\Sats R$ is locally compact. Then $\OPAI_R\colon \Loc\op \to \Set$ is representable.
\end{theorem}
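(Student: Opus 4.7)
The plan is to imitate the proof of \cref{prop:when_localic_monoid_spectrum_exists}: I rewrite the generalised presentation defining $\OPAI_R$ so that its locale of generators is $\Sats R$ rather than $R$ itself, and then invoke local compactness of $\Sats R$ to supply the needed exponentials in $\Loc$.

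First, as in \cref{prop:OPMAI_M_cong_OPMAI_SM}, the multiplicative relation forces any $u \in \O X \oplus \O R$ belonging to $\OPAI_R(X)$ to satisfy $(\O X \oplus \mu_\times)(u) \le (\O X \oplus \iota_1)(u)$, and hence (using that the inserter defining $\Sats R$ is preserved by $\O X \oplus (-)$) to factor uniquely as $(\O X \oplus \kappa)(u')$ for some $u' \in \O X \oplus \Sats R$. The multiplicative conditions then translate directly into conditions on $u'$ phrased in terms of $\mu_\times^\Sats\colon \Sats R \to \Sats R \oplus \Sats R$ and $\epsilon_1 \kappa\colon \Sats R \to \Omega$, and the zero condition becomes $(\O X \oplus \widetilde\epsilon_0)(u') = 0$. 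For the additive inequality $(\O X \oplus \mu_+)(u) \le (\O X \oplus \iota_1)(u) \vee (\O X \oplus \iota_2)(u)$, I would apply the suplattice left adjoint $(\kappa \oplus \kappa)_!$ of the frame inclusion $\kappa \oplus \kappa\colon \Sats R \oplus \Sats R \hookrightarrow \O R \oplus \O R$ to both sides. The identification (established in the paper's proof that $\Sats M$ is a sub-comonoid of $\O M$) of $\Sats R \oplus \Sats R$ with the subframe of saturated opens of $\O R \oplus \O R$ shows that the right-hand side $(\O X \oplus \iota_1)(u) \vee (\O X \oplus \iota_2)(u)$ already lies in the image of $\O X \oplus (\kappa \oplus \kappa)$; hence the pulled-back inequality is equivalent to the original one, and is now phrased in terms of the dcpo morphism $(\kappa \oplus \kappa)_! \mu_+ \kappa\colon \Sats R \to \Sats R \oplus \Sats R$.

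The result is a generalised presentation for $\OPAI_R$ whose locale of generators is $\Sats R$ and whose locale of relations is a finite product of copies of $\Omega$ and $\Sats R \oplus \Sats R$. Since $\Sats R$ is locally compact by hypothesis and local compactness is stable under finite products in $\Loc$, all these locales are exponentiable, so the equaliser computing $\OPAI_R$ as a subfunctor of $\Hom_\Loc((-) \times \Sats R, \Srpnsk)$ can be taken inside $\Loc$, giving the required representing object. The main obstacle is the additive translation: it relies on the general observation that for a frame map $\iota\colon A \hookrightarrow B$ with suplattice left adjoint $\iota_!$, an inequality $x \le y$ in $B$ with $y$ in the image of $\iota$ is equivalent to $\iota_!(x) \le \iota_!(y)$ (since $\iota \iota_!$ is an inflationary closure operator fixing the image of $\iota$); applied here to $\iota = \O X \oplus (\kappa \oplus \kappa)$, this delivers the required equivalence.
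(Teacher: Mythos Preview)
Your proposal is correct and takes essentially the same approach as the paper's (very terse) proof: rewrite the generalised presentation with $\Sats R$ as the locale of generators and the modified additive operations, then invoke local compactness of $\Sats R$ to obtain the required exponentials. You in fact supply more detail than the paper on how the additive inequality is pulled back through $\O X \oplus (\kappa \oplus \kappa)$; the only point worth making fully explicit is that this map has a suplattice left adjoint not merely because $\kappa \oplus \kappa$ does, but because (as in \cref{prop:OPMAI_M_cong_OPMAI_SM}) the inserter defining $\Sats(R\times R)\cong \Sats R \oplus \Sats R$ is preserved by $\O X \oplus (-)$, so its image is again a complete sublattice.
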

\begin{proof}
 We use a presentation of the localic spectrum in terms of $\Sats R$ as in \cref{prop:when_localic_monoid_spectrum_exists}.
 The additive relations are given by the modified operations as in \cref{prop:quantic_spectrum_for_localic_semirings}.
\end{proof}
Note that while \cref{prop:when_localic_semiring_spectrum_exists} is more general than \cref{prop:localic_spectrum_for_localic_semirings}, it does not tell us the precise form of the spectrum.
Whether the spectrum in \cref{prop:when_localic_semiring_spectrum_exists} is still given by the frame of radical ideals is a possible topic for further research.

In future it could also be interesting to consider $\hom(\O R, \Omega)$ (and hence $\Idl(R)$) as a \emph{localic} suplattice as in \cite{resende2003localic} instead of simply a suplattice.
However, this would require the development of a theory of quotients of localic suplattices.

\section{Special cases}

In this section we will show that our construction reduces to the known ones in each of our original cases and describe some unusual examples.
Let us start with a lemma that will be helpful for showing when the conditions of \cref{prop:quantic_spectrum_for_localic_semirings} hold.

\begin{lemma}\label{prop:equivalent_conditions_for_SR_dualisable}
 Let $R$ be an overt localic semiring. The following conditions are equivalent:
 \begin{enumerate}
  \item $\Sats R$ is a dualisable suplattice,
  \item There is a family $(r_x)_{x \in X}$ of elements of $\O R$ and a family $(W_x)_{x \in X}$ of overt weakly closed sublocales of $R$ such that $s = \bigvee_{W_x \between s} r_x$
        for all saturated elements $s \in \O R$,
  \item $u \le \bigvee_{V \between u} \stabpos(V)$ for all $u \in \O R$, where $\stabpos(V) = \bigwedge\{ t \text{\textup{ saturated}} \mid V \between t \}$.
 \end{enumerate}
 If $R$ satisfies these equivalent conditions we say it is \emph{approximable}.
\end{lemma}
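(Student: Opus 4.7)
The plan is to establish the cycle $(i) \Rightarrow (ii) \Rightarrow (iii) \Rightarrow (i)$, relying on \cref{prop:supercontinuous_vs_dualisable} for the endpoints and on the dual description of monoid ideals from \cref{prop:dual_of_SatsR}.

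For $(i) \Rightarrow (ii)$, dualisability of $\Sats R$ yields by \cref{prop:supercontinuous_vs_dualisable} a dual basis consisting of $(s_x)_{x \in X}$ in $\Sats R$ and $(\sigma_x)_{x \in X}$ in $\hom(\Sats R, \Omega)$. By \cref{prop:dual_of_SatsR}, each $\sigma_x$ corresponds to a monoid ideal $W_x \in \MonIdl R$, in particular an overt weakly closed sublocale of $R$, and $\sigma_x(s) = W_x \between s$ on saturated elements. Then $r_x \coloneqq s_x$ and $W_x$ give the family in (ii). For $(iii) \Rightarrow (i)$, apply (iii) to a saturated $s$; the reverse inequality $\bigvee_{V \between s} \stabpos(V) \le s$ is automatic since $s$ is itself among the saturated elements met by any such $V$, so $s = \bigvee_{V \between s} \stabpos(V)$. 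Declaring $\sigma_V(s) \coloneqq V \between s \in \hom(\Sats R, \Omega)$ and $r_V \coloneqq \stabpos(V) \in \Sats R$ produces a dual basis, and \cref{prop:supercontinuous_vs_dualisable} delivers (i).

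The substantive step is $(ii) \Rightarrow (iii)$. First I reduce to the case where each $r_x$ is saturated and each $W_x$ is a monoid ideal. Replacing $r_x$ by its saturation $\kappa\kappa_!(r_x)$ preserves the join $s = \bigvee_{W_x \between s} r_x$ for $s$ saturated: one direction is monotonicity, the other uses that $r_x \le s$ forces $\kappa\kappa_!(r_x) \le \kappa\kappa_!(s) = s$. Replacing $W_x$ by the monoid ideal hull $W_x \top$ does not change the meeting relation with saturated elements, because $W_x\top \between u = W_x \between \kappa_!(u) = W_x \between \tilde u$ (where $\tilde u \coloneqq \kappa\kappa_!(u)$) and $\tilde s = s$ for saturated $s$.

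With these reductions in place, let $u \in \O R$ be arbitrary. Applying (ii) to the saturation $\tilde u$ and using that each $W_x$ is a monoid ideal, I obtain
\begin{equation*}
u \le \tilde u = \bigvee_{W_x \between \tilde u} r_x = \bigvee_{W_x \between u} r_x.
\end{equation*}
Next I show $r_x \le \stabpos(W_x)$: for any saturated $t$ with $W_x \between t$, applying (ii) to $t$ gives $t = \bigvee_{W_y \between t} r_y$, which already contains $r_x$ as a summand, so $r_x \le t$. Taking the meet over all such $t$ produces $r_x \le \stabpos(W_x)$. Combining the two estimates yields
\begin{equation*}
u \le \bigvee_{W_x \between u} \stabpos(W_x) \le \bigvee_{V \between u} \stabpos(V),
\end{equation*}
proving (iii). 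The main obstacle is the dual role of (ii) in this step: it is applied not only to the target element $\tilde u$ but also, crucially, to the auxiliary saturated witnesses $t$ that define $\stabpos(W_x)$, and obtaining a usable statement requires the WLOG passage to saturated $r_x$ and to monoid ideal $W_x$ beforehand.
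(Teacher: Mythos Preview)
Your proof is correct and follows essentially the same strategy as the paper: both establish the equivalences via \cref{prop:supercontinuous_vs_dualisable} and the key inequality $r_x \le \stabpos(W_x)$, which you and the paper derive identically from condition (ii). The only noteworthy difference is in how the general (non-saturated) $u$ is handled in $(ii)\Rightarrow(iii)$: you normalise the family in advance (replacing $W_x$ by $W_x\top$), whereas the paper leaves the family alone and instead, after proving the saturated case, observes $\stabpos(V)=\stabpos(V\top)$ and reindexes the join at the end. Your preliminary replacement of $r_x$ by its saturation is harmless but unnecessary, since the argument $W_x\between t \Rightarrow r_x\le t$ nowhere uses that $r_x$ is saturated.
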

\begin{proof}
 The equivalence of $(i)$ and $(ii)$ follows easily from \cref{prop:supercontinuous_vs_dualisable} by composing with $\kappa$ and $\kappa_!$ as appropriate.
 
 To see that $(ii)$ implies $(iii)$ observe that $\bigvee_{W_x \between s} r_x \le s$ implies $W_x \between s \implies r_x \le s$ for all saturated opens $s$ and
 hence, $r_x \le \stabpos(W_x)$. So for saturated $u$ we can immediately conclude that $u = \bigvee_{W_x \between u} r_x \le \bigvee_{V \between u} \stabpos(V)$.
 Now recall that $V \between (\iota_1)_! \mu_\times(t) \iff V \top \between t$. It follows that $\stabpos(V) = \stabpos(V \top)$ and
 $u \le (\iota_1)_! \mu_\times(u) \le \bigvee_{V \between (\iota_1)_! \mu_\times(u)} \stabpos(V) = \bigvee_{V \top \between u} \stabpos(V) =
 \bigvee_{V \top \between u} \stabpos(V \top) \le \bigvee_{V \between u} \stabpos(V)$ for general $u$, as required.
 
 Finally, we show $(iii)$ implies $(ii)$. We let $W_x$ range over all overt weakly closed sublocales and set $r_x = \stabpos(W_x)$.
 Then $(iii)$ gives $s \le \bigvee_{W_x \between s} \stabpos(W_x)$, while $\bigvee_{W_x \between s} \stabpos(W_x) \le s$ follows for saturated $s$
 by the definition of $\stabpos(W_x)$.
\end{proof}

\begin{corollary}\label{prop:supercontinuous_implies_approximable}
 If $\O R$ is supercontinuous, then $\Sats R$ is a dualisable suplattice.
\end{corollary}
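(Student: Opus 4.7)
The plan is to verify condition (ii) of \cref{prop:equivalent_conditions_for_SR_dualisable} directly, by showing that a dual basis for $\O R$ (which exists by supercontinuity) already provides the data required there. Note first that supercontinuous frames are always overt (as remarked after \cref{prop:supercontinuous_vs_dualisable}), so that the hypotheses of \cref{prop:equivalent_conditions_for_SR_dualisable} apply to $R$.

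By the equivalence of supercontinuity and dualisability in \cref{prop:supercontinuous_vs_dualisable}, the supercontinuous suplattice $\O R$ admits a dual basis: a family $(r_x)_{x \in X}$ of elements of $\O R$ together with a family $(\sigma_x)_{x \in X}$ in $\hom(\O R, \Omega)$ such that $a = \bigvee_{x \in X} \sigma_x(a) \scalarMult r_x$ for every $a \in \O R$. Since overt weakly closed sublocales of $R$ are in bijection with suplattice homomorphisms $\O R \to \Omega$, each $\sigma_x$ corresponds to an overt weakly closed sublocale $W_x$ of $R$ characterised by $W_x \between a \iff \sigma_x(a) = \top$.

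It then suffices to unfold the scalar multiplication: for any (in particular saturated) $s \in \O R$,
\[ s \;=\; \bigvee_{x \in X} \sigma_x(s) \scalarMult r_x \;=\; \bigvee \{r_x \mid x \in X,\ \sigma_x(s) = \top\} \;=\; \bigvee_{W_x \between s} r_x. \]
This verifies condition (ii) of \cref{prop:equivalent_conditions_for_SR_dualisable}, so $\Sats R$ is a dualisable suplattice. There is really no obstacle here: the approximation of general elements of $\O R$ provided by the dual basis automatically restricts to an approximation of saturated elements by the same family, and one does not need to modify either $r_x$ or $W_x$.
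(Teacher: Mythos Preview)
Your proof is correct and follows essentially the same route as the paper: both obtain a dual basis for $\O R$ from \cref{prop:supercontinuous_vs_dualisable} and observe that this immediately yields condition~(ii) of \cref{prop:equivalent_conditions_for_SR_dualisable}. You have simply unpacked the passage from dual basis to condition~(ii) (including the overtness prerequisite and the identification of $\sigma_x$ with $W_x$) that the paper leaves implicit.
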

\begin{proof}
 By \cref{prop:supercontinuous_vs_dualisable}, $\O R$ has dual basis. This implies condition $(ii)$ above.
\end{proof}

\subsection{The discrete case}

The following proposition exhibits the Zariski spectrum of a discrete ring and the Stone spectrum of a distributive lattice as a special case of our construction.

\begin{proposition}
 Let $R$ be a discrete semiring. Then $R$ is approximable and the quantic spectrum is given by the usual quantale of (set-theoretic) ideals of $R$.
 The localic spectrum is then the usual pointfree Zariski spectrum of $R$.
\end{proposition}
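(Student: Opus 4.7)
The plan is to verify the hypotheses of \cref{prop:quantic_spectrum_for_localic_semirings} for a discrete semiring $R$ and then identify the resulting $\Idl(R)$ with the classical quantale of ideals.

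First I would check the hypotheses. A discrete locale is spatial and hence overt, so only the dualisability of $\Sats R$ remains. Since $\O R = \powerset{R}$ is the free frame on $R$, it admits a dual basis given by the singletons $\{r\}$ together with the suplattice maps $\powerset{R} \to \Omega$ sending $S$ to $\llbracket r \in S \rrbracket$; in particular $\O R$ is supercontinuous. By \cref{prop:supercontinuous_implies_approximable}, $\Sats R$ is dualisable, so \cref{prop:quantic_spectrum_for_localic_semirings} applies and the quantic spectrum is $\Idl(R)$.

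Next I would identify $\MonIdl R$ concretely. By the fact recorded in the background, overt weakly closed sublocales of the discrete locale $R$ are exactly subsets, giving $\hom(\O R, \Omega) \cong \powerset{R}$; the lax monoidal structure on $\hom(\O(-), \Omega)$ transports the multiplication on $R$ to elementwise product of subsets $S \mult T = \{st : s \in S,\, t \in T\}$ with unit $\{1\}$, and the additive structure similarly yields $S + T = \{s + t : s \in S,\, t \in T\}$ with unit $\{0\}$. The two-sided reflection $W \mapsto W \top$ becomes $S \mapsto S \mult R$, so its fixed points are precisely the set-theoretic monoid ideals, and $\MonIdl R$ is the familiar quantale of monoid ideals.

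Then I would compute $\Idl(R)$ using \cref{prop:kernel_congruence_of_addquot}. The relation $\widetilde{0}_+ = 0$ becomes $\{0\} = \emptyset$ in the quotient, and $I \mathop{\widetilde+} J \le I \vee J$ becomes $(I + J)R \le I \cup J$. By standard arguments on nuclei, the fixed points of the corresponding nucleus on $\MonIdl R$ are precisely the subsets closed under the usual axioms for a semiring ideal (containing $0$, closed under addition, and stable under multiplication by $R$); the induced multiplication on these fixed points coincides with the usual product of ideals since it is the nucleus-closure of the elementwise product. Thus $\Idl(R)$ is (isomorphic to) the classical quantale of ideals of $R$, and applying the localic reflection gives the frame of radical ideals, i.e.\ the pointfree Zariski spectrum.

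The routine part is the verification that the spatial/supercontinuity conditions hold; the one step requiring care is matching the abstract quotient description of $\Idl(R)$ with the concrete lattice of ring ideals, i.e.\ checking that the nucleus associated to the relations $\widetilde 0_+ = 0$ and $I \mathop{\widetilde +} J \le I \vee J$ sends a subset to the ideal it generates and that its multiplication agrees with the classical one. This is essentially bookkeeping, but it is where the translation between the quantale-theoretic and ring-theoretic presentations actually takes place.
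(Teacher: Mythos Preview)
Your proposal is correct and follows essentially the same route as the paper: supercontinuity of $\O R$ gives dualisability of $\Sats R$ via \cref{prop:supercontinuous_implies_approximable}, and then the overt weakly closed ideals are identified with the set-theoretic ones since overt weakly closed sublocales of a discrete locale are precisely subsets. The only difference is that you spell out the identification more carefully by passing through $\MonIdl R$ and \cref{prop:kernel_congruence_of_addquot}, whereas the paper simply asserts that the overt weakly closed ideal conditions translate directly into the usual semiring ideal axioms.
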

\begin{proof}
 It is well known that discrete locales are supercontinuous and hence $\Sats R$ is dualisable by \cref{prop:supercontinuous_implies_approximable}.
 Then \cref{prop:quantic_spectrum_for_localic_semirings} implies that the localic spectrum is given by the quantale of overt weakly closed ideals.
 But the overt weakly closed sublocales of a discrete locale are precisely the open sublocales, which in turn correspond to its subsets. It is then
 easy to see that the overt weakly closed ideals coincide with the familiar notion of semiring ideals and the result follows.
\end{proof}

\subsection{The Hofmann--Lawson spectrum}

To obtain the Hofmann--Lawson spectrum, we equip the continuous frame $\O X$ in question with the Scott topology to obtain a spatial localic distributive lattice $L$.
Note that $L$ is a supercontinuous locale (for instance, by observing that $L = \Srpnsk^X$ is a regular-injective and using \cite{banaschewski1991projective}) and hence its localic spectrum exists.
To determine the form of this spectrum, we will need the following lemma.

\begin{proposition}\label{prop:scott_closed_weakly_closed}
 Let $P$ be a continuous dcpo. The Scott-closed subsets of $P$ correspond to overt weakly closed sublocales in the Scott topology.
\end{proposition}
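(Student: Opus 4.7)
The plan is to use the identification, recalled in the excerpt, of overt weakly closed sublocales of a locale $X$ with suplattice homomorphisms $\O X \to \Omega$. Applied to the locale whose frame of opens is the frame $\sigma(P)$ of Scott-opens, the task becomes exhibiting a bijection between Scott-closed subsets of $P$ and suplattice maps $\sigma(P) \to \Omega$. The reference to $\between$ in the excerpt suggests the natural candidate: to a Scott-closed set $C \subseteq P$ we associate the map $\phi_C$ defined by $\phi_C(U) = (C \between U)$, i.e.\ the truth value of the proposition that $C \cap U$ is inhabited. Join preservation is automatic (a set meets a union iff it meets one of the pieces), and $\phi_C(\emptyset) = \bot$ is clear.

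For the reverse direction, given a suplattice map $\phi\colon \sigma(P) \to \Omega$ I would set
\[ C_\phi \;=\; \{\,p \in P \mid \forall U \in \sigma(P),\ p \in U \implies \phi(U) = \top\,\}. \]
Because Scott-opens are upsets, $C_\phi$ is a downset, and because Scott-opens $U$ satisfy $\bigvee D \in U \implies D \between U$, the set $C_\phi$ is closed under directed joins; thus $C_\phi$ is Scott-closed. The two assignments will visibly be monotone, so it remains to verify they are mutually inverse.

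The identity $\phi_{C_\phi} = \phi$ is the step where I expect to do real work and which uses continuity essentially. One direction is immediate: a witness $p \in C_\phi \cap U$ forces $\phi(U) = \top$ by the definition of $C_\phi$. For the other direction, I would invoke \cref{prop:base_scott_top_continuous_dcpo} to write $U = \bigcup\{\,\twoheaduparrow a \mid a \in U\,\}$ (using that $U$ is an upset, so $a \in U$ already implies $\twoheaduparrow a \subseteq U$), whence $\phi(U) = \top$ forces $\phi(\twoheaduparrow a) = \top$ for some $a \in U$ by join preservation. Then for any Scott-open $V \ni a$ I have $\twoheaduparrow a \subseteq V$ (again since $V$ is an upset), so $\phi(V) \ge \phi(\twoheaduparrow a) = \top$; this shows $a \in C_\phi$ and hence $\phi_{C_\phi}(U) = \top$.

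Finally, for $C_{\phi_C} = C$ the inclusion $C \subseteq C_{\phi_C}$ is immediate. Conversely, given $p \in C_{\phi_C}$, for each $a \ll p$ the Scott-open $\twoheaduparrow a$ contains $p$, so by hypothesis it must meet $C$, yielding some $c \in C$ with $a \ll c$, hence $a \le c$, and thus $a \in C$ since $C$ is a downset. Therefore $\twoheaddownarrow p \subseteq C$; since continuity gives $p = \dirsup \twoheaddownarrow p$ and $C$ is closed under directed joins, we conclude $p \in C$. The main obstacle is the forward equality $\phi_{C_\phi} = \phi$: it is the only place where a suplattice map might fail to be recovered from its ``support'', and continuity of $P$ is what rescues it by ensuring the basic opens $\twoheaduparrow a$ suffice to detect the value of $\phi$.
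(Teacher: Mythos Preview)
Your proof is correct and follows essentially the same strategy as the paper: both identify overt weakly closed sublocales with suplattice maps $\sigma(P)\to\Omega$, send a Scott-closed $C$ to $U\mapsto (C\between U)$, and exploit the base $\{\twoheaduparrow a\}$ together with $p=\dirsup\twoheaddownarrow p$ for the verifications.

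The one noteworthy difference is in the inverse assignment. The paper sets $S_h$ to be the Scott-closure of $\{x\mid h(\twoheaduparrow x)=\top\}$ and must then invoke \cref{prop:scott_closure_continuous_dcpo} (that this closure is reached in one step) when checking $h_{S_h}\le h$. Your $C_\phi=\{p\mid \forall U\ni p,\ \phi(U)=\top\}$ is already Scott-closed by construction, so you never need that lemma; the two sets coincide, but your packaging is slightly cleaner. Otherwise the arguments line up step for step.
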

\begin{proof}
 Given a Scott-closed set $S \subseteq P$, we can define a suplattice map $h_S\colon \O P \to \Omega$ by $h_S(U) = \top \iff S \between U$.
 
 Conversely, given a suplattice homomorphism $h\colon \O P \to \Omega$, we define a set $S_h \subseteq P$ as the Scott-closure of $\{ x \in P \mid h(\twoheaduparrow x) = \top\}$.
 (This set is already downwards closed, but needs to be closed under directed suprema.)
 
 We now show these are inverse operations. Take $x \in S$. If $y \ll x$, then $S \between \twoheaduparrow y$ and so $y \in S_{h_S}$.
 But $x = \dirsup \twoheaddownarrow x$ by continuity and hence $x \in S_{h_S}$ by Scott-closedness. Thus, $S \subseteq S_{h_S}$.
 Conversely, if $S \between \twoheaduparrow x$ then $x \in S$ by downward closure and hence $S_{h_S} \subseteq S$, since $S$ is Scott-closed.
 
 Suppose $h(U) = \top$. We may write $U = \bigcup_{x \in U} \twoheaduparrow x$ by \cref{prop:base_scott_top_continuous_dcpo}
 and so $h(\twoheaduparrow x) = \top$ for some $x \in U$. Thus, $S_h \between U$ and hence $h_{S_h}(U) = \top$. Consequently, $h \le h_{S_h}$.
 On the other hand, if $S_h \between U$, then $\{ x \in P \mid h(\twoheaduparrow x) = \top\} \between U$ by \cref{prop:scott_closure_continuous_dcpo} and the Scott-openness of $U$.
 Thus, $h(\twoheaduparrow x) = \top$ for some $x \in U$ so that $h(U) = \top$. Hence, $h_{S_h} \le h$ and the assignments are inverse operations.
\end{proof}

\begin{corollary}
 The localic spectrum of a continuous frame $L \cong \O X$ equipped with the Scott topology is isomorphic to $X$ and hence coincides with the Hofmann--Lawson spectrum.
\end{corollary}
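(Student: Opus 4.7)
The plan is to invoke \Cref{prop:localic_spectrum_for_localic_semirings} and then compute the frame $\Rad(L)$ directly, showing it is isomorphic to $L \cong \O X$. Since the preceding paragraph already observes that $L$ is a supercontinuous locale, \Cref{prop:supercontinuous_implies_approximable} yields that $\Sats L$ is dualisable, so the localic spectrum exists and is given by $\Rad(L)$.

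I would then unpack what an overt weakly closed ideal of $L$ amounts to. By \Cref{prop:scott_closed_weakly_closed} the overt weakly closed sublocales of $L$ (with the Scott topology) correspond bijectively to the Scott-closed subsets of $L$. Under the semiring structure $(\bot, \vee, \top, \wedge)$ of the distributive lattice, an ideal in the paper's sense is then a Scott-closed set containing $\bot$ and closed under binary joins---in other words, a Scott-closed lattice-theoretic ideal. Since $\wedge$ is idempotent, the quantale multiplication on $\Idl(L)$ agrees with intersection of ideals, so $\Idl(L)$ is already a frame and $\Rad(L) = \Idl(L)$.

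Finally, I would identify Scott-closed ideals with elements of $L$ via the maps $a \mapsto {\downarrow}\, a$ and $I \mapsto \bigvee I$. A Scott-closed ideal is closed under both finite and directed joins, hence under arbitrary joins, so it contains its supremum and is the principal downset generated by it; conversely, each principal downset is plainly a Scott-closed ideal. These assignments are mutually inverse, order-preserving (so preserve joins), and translate meets to intersections, yielding a frame isomorphism $L \cong \Rad(L)$. Composing with $L \cong \O X$ gives $\Spec L \cong X$, which is by definition the Hofmann--Lawson spectrum of the continuous frame $L$. I do not expect a serious obstacle: all of the real work has been front-loaded into \Cref{prop:scott_closed_weakly_closed} and \Cref{prop:quantic_spectrum_for_localic_semirings}, and what remains is essentially the standard observation that Scott-closed ideals in a complete lattice reduce to principal downsets.
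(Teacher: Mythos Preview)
Your proposal is correct and follows essentially the same route as the paper: invoke the main representability theorem, use \Cref{prop:scott_closed_weakly_closed} to identify overt weakly closed sublocales with Scott-closed sets, observe that the Scott-closed ideals are exactly the principal downsets, and conclude that $\Idl(L) \cong L$ is already a frame so that $\Rad(L) \cong \O X$. The only cosmetic difference is that the paper cites \Cref{prop:quantic_spectrum_for_localic_semirings} directly and computes the quantic spectrum first, whereas you go via \Cref{prop:localic_spectrum_for_localic_semirings} and then argue $\Rad(L) = \Idl(L)$; the content is the same.
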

\begin{proof}
 By \cref{prop:quantic_spectrum_for_localic_semirings} the quantic spectrum is given by the quantale of overt weakly closed ideals. From \cref{prop:scott_closed_weakly_closed} we
 find that these correspond to the Scott-closed ideals of $L$, which are precisely the principal downsets. We can then see that the quantic spectrum is given by the frame $L \cong \O X$.
 Since this is already a frame, this is also the frame of the localic spectrum of $L$.
\end{proof}

\subsection{The Gelfand spectrum}

It remains to show that the Gelfand spectrum is a special case of our construction. For simplicity, we will show this for the locale of real-valued functions on a compact regular locale.
The complex case is essentially identical. It should also be possible to prove it directly from the axioms of a commutative localic C*-algebra as given in \cite{Henry2016}.

\begin{proposition}\label{prop:gelfand_approximable}
 Suppose $X$ is a compact regular locale. Then $A = \R^X$ is an overt approximable localic ring.
\end{proposition}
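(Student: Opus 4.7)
I would split the proof into three parts. First, establish that $A = \R^X$ is a localic ring. Since $X$ is compact regular and hence locally compact, it is exponentiable in $\Loc$; the exponential functor $(-)^X$ is a right adjoint and so preserves finite limits, carrying the ring object structure on $\R$ pointwise to a commutative localic ring structure on $A$.

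Second, verify overtness of $A$. The locale $\R$ is regular and locally compact, hence overt. For $X$ compact regular one can show $A = \R^X$ inherits complete regularity (the pointfree counterpart of $C(X,\R)$ being completely regular in the sup-norm topology for $X$ compact Hausdorff), and any completely regular locale embeds into a power of $[0,1]$ and is therefore overt. Alternatively, one can argue by producing a base of positive elements from bounded cozero data on $X$.

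The main work is then to show $\Sats A$ is a dualisable suplattice. Here I would aim to verify condition $(ii)$ of \cref{prop:equivalent_conditions_for_SR_dualisable} by constructing an explicit dual basis. The natural candidates come from local data on $X$: for each basic open $U \in \O X$ (afforded by compact regularity) and each pair of rationals $q_1 < q_2$, take $r_{U,q_1,q_2} \in \O A$ to be the open declaring that the function lies strictly between $q_1$ and $q_2$ on $U$, and let $W_{U,q_1,q_2}$ be an overt weakly closed sublocale of $A$ associated with the "evaluation-on-$U$" functional. Saturation then reduces the test $W \between s$ to a pointwise-in-$U$ condition on values.

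The hard part is verifying that this family actually constitutes a dual basis for $\Sats A$---i.e.\ that every saturated open $s$ decomposes as $\bigvee_{W_{x} \between s} r_x$. This is the constructive analogue of the Stone--Weierstrass step in classical Gelfand theory and will require a pointfree Urysohn/partition-of-unity argument that uses compact regularity of $X$ essentially to pass from local pointwise-valuation data to a global approximation of $s$ by elements of the proposed basis. I expect this approximation argument to be the main obstacle, since $A = \R^X$ is not itself locally compact (so \cref{prop:supercontinuous_implies_approximable} does not apply directly), and one must leverage the finer analytic structure coming from compact regularity rather than general frame-theoretic continuity.
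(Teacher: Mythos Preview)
Your first two parts are fine, and the paper also just cites \cite{Henry2016} for overtness and the Banach algebra structure. The third part, however, has a genuine gap.

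Your proposed dual basis---opens $r_{U,q_1,q_2}$ expressing ``the function takes values in $(q_1,q_2)$ on $U$'' together with evaluation-type weakly closed sublocales---is a subbase for \emph{all} of $\O A$, and you already note that $\O A$ itself is not supercontinuous. For condition~(ii) to hold one needs these basic opens to be recoverable from the test ``$W_x \between s$'' \emph{only for saturated} $s$, and this is where the proposal breaks down. Saturation is governed by the multiplicative structure: $s$ is saturated when $fg \in s \vdash f \in s$, so saturated opens are controlled by divisibility and hence by \emph{zero sets} of functions, not by pointwise value bounds. There is no evident reason why ``$W_{U,q_1,q_2} \between s$'' for your evaluation-style $W$ should force $r_{U,q_1,q_2} \le s$ when $s$ is saturated, and your appeal to a Stone--Weierstrass/Urysohn step does not address this: Stone--Weierstrass concerns density of subalgebras, which is orthogonal to the divisibility structure at issue here.

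The paper takes a quite different route, using condition~(iii) and the Banach algebra structure directly. One covers an open $u$ by positive opens $F$ of small diameter with $B_\epsilon(F) \le u$, then defines a truncation map $\zeta(f) = ([f - \tfrac{1}{2}\epsilon] \vee 0) + ([f + \tfrac{1}{2}\epsilon] \wedge 0)$ which inflates the zero set of $f$. The image $G$ of $\zeta$ still meets $B_\epsilon(F) \le u$, and the key analytic step is to construct, for any $h$ near $f$, a function $\psi(f,h)$ with $\zeta(f) = h \cdot \psi(f,h)$ (done by a case split on whether $\abs{f(x)}$ is small or bounded away from zero). This witnesses that any saturated $t$ meeting $G$ must contain $F$, giving $F \le \stabpos(G)$ and hence condition~(iii). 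The argument is metric/analytic rather than approximation-theoretic, and crucially exploits that saturation is about divisibility.
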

\begin{proof}
 It is shown in \cite{Henry2016} that $A$ is overt and furthermore a localic Banach algebra. 
 By \cref{prop:equivalent_conditions_for_SR_dualisable} we must show that $u \le \bigvee_{V \between u} \stabpos(V)$ for all $u \in \O A$.
 
 Given an overt sublocale $F$ of $A$ and a rational $\epsilon > 0$, we define $B_\epsilon(F)$ by the expression $\{h \colon A \mid \exists f\colon F.\ \norm{h-f} < \epsilon\}$ in the internal logic
 of the coherent hyperdoctrine of open sublocales.
 By \cite{Henry2016}, we may express $u$ as $u = \bigvee_i F_i$ where each $F_i$ is a positive open sublocale such that $B_{\epsilon_i}(F_i) \le u$ for some rational $\epsilon_i > 0$.
 Furthermore, each $F_i$ can then in turn be expressed as a join of the open sets contained in it of diameter less than $\tfrac{1}{4}\epsilon_i$.\footnote{A sublocale $F$ has diameter less than a
 positive rational $\delta$ if $\,\vdash_{x,y \colon F} \norm{x-y} < \delta$ in the internal logic.}
 Thus, we may assume $\diam(F_i) < \frac{1}{4}\epsilon_i$ without loss of generality.
 
 Consider $F = F_i$ and $\epsilon = \epsilon_i$ for some $i$, where we omit the subscripts for notational convenience.
 The idea is to construct an overt sublocale $G$ such that $G \between B_\epsilon(F)$ and every saturated open $t$ which meets $G$ contains $F$.
 If we can do this, we could deduce $F \le \bigwedge\{t \in \Sats A \mid G \between t\} = \stabpos(G)$. And clearly $G \between u$,
 since $G \between B_\epsilon(F) \le u$.
 Therefore, $F \le \bigvee_{V \between u} \stabpos(V)$. The result would then follow by taking the join over all such $F$.

 We now describe how to construct $G$. The intuition behind our construction is that we want to find functions which are sufficiently close to those in $F$,
 but which have inflated zero sets. The absolute value map and the meet and join operations on $\R$ induce `pointwise' operations on $A$ which satisfy
 $h \vee 0 = \tfrac{1}{2}(h + \abs{h})$ and $h \wedge 0 = \tfrac{1}{2}(h - \abs{h})$ in the internal logic, where $0$ is the constant zero map. Note that the map defined by $f \mapsto (f \vee 0) + (f \wedge 0)$ in the internal logic is simply the identity.
 We define a locale map $\zeta\colon F \to A$ by $f \mapsto ([f - \tfrac{1}{2} \epsilon] \vee 0) + ([f + \tfrac{1}{2} \epsilon] \wedge 0)$
 and set $G$ to be the image of $F$ under $\zeta$.
 
 Let us show that $G \between B_\epsilon(F)$. Note that $G \between B_\epsilon(F) \iff F \between \zeta^*(B_\epsilon(F))$, or in the internal logic,
 $\exists f\colon F.\ \zeta(f) \in B_\epsilon(F)$. So by the definition of $B_\epsilon(F)$, we must show $\exists f\colon F.\ \exists f'\colon F.\ \norm{\zeta(f) - f'} < \epsilon$.
 A straightforward calculation in the internal logic gives $\norm{\zeta(f) - f} = \tfrac{1}{2}\norm{\abs{f+\tfrac{1}{2}\epsilon} - \abs{f-\tfrac{1}{2}\epsilon}} \le
 \tfrac{1}{2}\norm{(f+\tfrac{1}{2}\epsilon) - (f-\tfrac{1}{2}\epsilon)} = \tfrac{1}{2}\norm{\epsilon} = \tfrac{1}{2}\epsilon$,
 where the inequality is from the reverse triangle inequality. (Here the judgemental inequality relation can be defined from equality and $\wedge$ in the usual way.)
 Thus, $\norm{\zeta(f) - f} < \epsilon$. Now since $F$ is positive, we have $\exists f\colon F.\ \top$. Combining these yields $\exists f\colon F.\ \norm{\zeta(f) - f} < \epsilon$ and the desired result follows.
 
 Now we show that every saturated open $t$ which meets $G$, contains $F$.
 The plan is to construct a localic map $\psi\colon F\times F \to A$ such that $\zeta(f) = h \times \psi(f,h)$ in the internal logic.
 
 Given such a map $\psi$, we can conclude the result as follows.
 Since $G \between t$, we know $\exists f \colon F.\ \zeta(f) \in t$.
 Now since $\zeta(f) = h \times \psi(f,h)$, we have $\exists f \colon F.\ h \times \psi(f,h) \in t$ for $h\colon F$.
 But $t$ being saturated means $\exists y\colon A.\ xy \in t \vdash_{x \colon A} x \in t$.  Putting these together,
 we may conclude $\vdash_{h \colon F} h \in t$, which gives $F \le t$ as desired.
 
 We define $\psi\colon F \times F \to \R^X$ by specifying its uncurried form $\psi^\flat\colon F \times F \times X \to \R$
 on a covering pair of opens: $W = \{(f, h, x) \colon F \times F \times X \mid \abs{f(x)} > \tfrac{1}{4}\epsilon \}$
 and $Z = \{(f, h, x) \colon F \times F \times X \mid \abs{f(x)} < \tfrac{1}{2}\epsilon \}$.
 
 Note that restricted evaluation map $\ev(\pi_2,\pi_3)\vert_W$ factors through $\R^* = (-\infty, 0) \vee (0, \infty)$.
 This can be seen using internal logic: consider a triple $(f,h,x)\colon W$.
 Because $\diam(F) < \tfrac{1}{4}\epsilon$, we have $\norm{f - h} < \tfrac{1}{4}\epsilon$.
 By the definition of the norm, this gives $\abs{f(x)-h(x)} < \tfrac{1}{4}\epsilon$.
 Putting this together with $\tfrac{1}{4}\epsilon < \abs{f(x)}$, we find $\abs{f(x) - h(x)} < \abs{f(x)}$, or equivalently, $0 < \abs{f(x)} - \abs{f(x) - h(x)}$.
 But then $0 < \abs{f(x)} - \abs{f(x) - h(x)} \le \abs{f(x) - (f(x) - h(x))} = \abs{h(x)}$ by the reverse triangle inequality.
 We may conclude that $\ev(h,x) = h(x)$ lies in $\R^*$ as required.
 
 We define $\psi^\flat\vert_Z\colon (f,h,x) \mapsto 0$ and $\psi^\flat\vert_W\colon (f,h,x) \mapsto (h(x))^{-1} \times \zeta(f)(x)$ in the internal logic,
 where $(-)^{-1}\colon \R^* \to \R^*$ is the reciprocal operation and we implicitly factor $\ev(\pi_2,\pi_3)\vert_W$ through $\R^*$.
 To see that these agree on the overlap $W \wedge Z$, we show that $\zeta^\flat(\pi_1,\pi_3)$ is zero on $Z$.
 Recall that $\zeta(f) = ([f - \tfrac{1}{2} \epsilon] \vee 0) + ([f + \tfrac{1}{2} \epsilon] \wedge 0)$ in the internal logic. Now if $(f,h,x)$ lies in $Z$, we have
 $(f(x) < \tfrac{1}{2}\epsilon) \land (f(x) > - \tfrac{1}{2}\epsilon)$. Then $f(x) - \tfrac{1}{2}\epsilon < 0$ and $f(x) + \tfrac{1}{2}\epsilon > 0$. We then quickly see that $\zeta(f)(x) = 0$, as required.
 
 Finally, if $i$ is the inclusion of $F$ into $A$, we show $\mu_\times (i \pi_2, \psi) = \zeta \pi_1$.
 To see this we uncurry each expression and consider the two restrictions in the internal logic. We must show $0 = \zeta(f)(x)$ on $Z$ and
 $h(x) \times ((h(x))^{-1} \times \zeta(f)(x)) = \zeta(f)(x)$ on $W$. But we have already shown the first equality above and the second one follows
 immediately from associativity of multiplication and properties of the reciprocal.
\end{proof}

\begin{proposition}\label{prop:gelfand_expected_spectrum}
 If $X$ is a compact regular locale, then the localic spectrum of $\R^X$ is isomorphic to $X$ (as in Gelfand duality).
\end{proposition}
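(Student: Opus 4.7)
By \cref{prop:gelfand_approximable,prop:localic_spectrum_for_localic_semirings}, the localic spectrum of $A = \R^X$ exists and is represented by $\Rad(A)$, the frame of overt weakly closed radical ideals of $A$. Our task therefore reduces to exhibiting a canonical isomorphism of locales $\alpha\colon X \to \Spec(A)$.

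The plan is to produce $\alpha$ via the universal property of $\Spec$. The evaluation $\ev\colon X \times A \to \R$ pulls back the open sublocale $\R^* = (-\infty, 0) \vee (0, \infty)$ to an element $U = \ev^*(\R^*) \in \O X \oplus \O A$. A short calculation, essentially the same as in the discrete case, verifies that $U$ lies in $\OPAI_A(X)$: the sublocale $\R^*$ contains $1$, excludes $0$, and satisfies $x+y \in \R^* \vdash x \in \R^* \lor y \in \R^*$ (by cotransitivity of apartness) as well as $xy \in \R^* \dashv\vdash x \in \R^* \land y \in \R^*$ (since $\R$ has no zero divisors in the positive sense). By the universal property of $\Spec$, this $U$ corresponds to the desired locale map $\alpha\colon X \to \Spec(A)$.

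To show $\alpha$ is an isomorphism, the plan is to exhibit an explicit inverse frame homomorphism $\beta\colon \O X \to \Rad(A)$. On an open $u \in \O X$, $\beta(u)$ is intended to be the radical ideal ``of functions whose cozero is contained in $u$'', specified pointfree-ly as the overt weakly closed sublocale of $A$ whose associated suplattice map $\O A \to \Omega$ detects meeting with the internally-defined sublocale $\{f \colon A \mid \coz(f) \le u\}$. Tracing through the universal property shows that $\alpha^*\colon \Rad(A) \to \O X$ sends an ideal $I$ to the join of cozeros of its members (interpreted in the generalised-element sense).

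The principal obstacle is verifying that $\alpha^*\circ\beta$ and $\beta\circ\alpha^*$ are both identities. The first, $\alpha^*\circ\beta = \id_{\O X}$, is the assertion that every $u \in \O X$ equals the join of cozero sets of functions supported in $u$. This is precisely Urysohn's lemma for compact regular locales, which produces, for each positive open $v$ well-inside $u$, a function $f \in A$ with $v \le \coz(f) \le u$. The second, more delicate, identity $\beta\circ\alpha^* = \id_{\Rad(A)}$ says that every overt weakly closed radical ideal is determined by its cozero support in $X$. I expect this to follow from further use of the construction in the proof of \cref{prop:gelfand_approximable}: the saturating sublocales $G$ built there already encode the fact that any sublocale of $A$ of the form at issue is, up to saturation (and hence after passing to the radical), controlled by its image in $\O X$ under $\coz$. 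These two identities together make $\alpha^*$ and $\beta$ mutually inverse frame maps.
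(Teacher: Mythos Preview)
Your approach diverges substantially from the paper's, and the divergence matters because it shifts the hard work onto a step you have not actually carried out.

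The paper does not attempt to build the isomorphism $X \cong \Spec(\R^X)$ from scratch via the evaluation map. Instead it invokes the existing constructive localic Gelfand duality of \cite{Henry2016} together with the identification in \cite{ConstructiveGelfandNonunital} of opens of the Gelfand spectrum with overt weakly closed ideals, obtaining an order isomorphism $\Idl(\R^X) \cong \O X$ essentially for free. The only new ingredient is then to show that $\Idl(\R^X)$ is \emph{already} a frame, so that the localic reflection $\Rad(\R^X)$ coincides with it. This is done by showing $\MonIdl(\R^X)$ is a frame, equivalently that $\Sats(\R^X)$ is a localic semilattice, which amounts to the sequent $f \in U \vdash_{f} \exists k.\ f^2 k \in U$; the proof reuses the $\zeta$-construction from \cref{prop:gelfand_approximable} with $\psi(f)(x) = f(x)^{-2}\zeta(f)(x)$ on the region where $\abs{f(x)}$ is bounded away from zero.

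Your route, by contrast, amounts to re-deriving constructive Gelfand duality directly. The genuine gap is your second identity $\beta\circ\alpha^* = \id_{\Rad(A)}$: the assertion that every overt weakly closed radical ideal is determined by its cozero support is precisely the substantive content of the duality, and ``I expect this to follow'' is not a proof. The techniques of \cref{prop:gelfand_approximable} show that saturated opens are approximated by the $\zeta$-images, but converting this into the statement that the map $I \mapsto \bigvee \coz(I)$ is injective on radical ideals requires a further argument you have not supplied (and which in the literature occupies several pages). You also leave unverified that $\beta$ lands in radical ideals and is a frame homomorphism. If you want to avoid citing the Gelfand literature, you would need to fill all of this in; otherwise the paper's strategy of citing \cite{Henry2016,ConstructiveGelfandNonunital} and then proving only that $\Idl(\R^X)$ is a frame is both shorter and complete.
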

\begin{proof}
 By the localic Gelfand duality of \cite{Henry2016}, we know that the localic Gelfand spectrum of the C*-algebra $\C^X$ is isomorphic to $X$
 and it is shown in \cite{ConstructiveGelfandNonunital} that the opens of the Gelfand spectrum are in turn in bijection with the overt weakly closed ideals of $\C^X$.
 The use of complex numbers here is not essential and similar results hold for the real algebra $\R^X$ giving an order isomorphism $\Idl(\R^X) \cong \O X$.
 
 Now by \cref{prop:quantic_spectrum_for_localic_semirings,prop:gelfand_approximable} we have that the quantic spectrum of $\R^X$ is given by $\Idl(\R^X)$.
 It remains to show that $\Idl(\R^X)$ is a frame, from which it follows that the localic spectrum is given by $\Rad(\R^X) \cong \Idl(\R^X) \cong \O X$.
 
 The quantale $\Idl(\R^X)$ is a quotient of the quantale of monoid ideals $\MonIdl(\R^X)$ and so it is enough to show the latter is a frame.
 But $\MonIdl(\R^X)$ is a frame if and only if its dual $\Sats(\R^X)$ is a localic semilattice and it is not hard to show that this is in turn
 equivalent to requiring $f \in U \vdash_{f\colon \R^X} \exists k\colon \R^X.\ f^2 k \in U$ in the internal logic for all (basic) opens $U$.
 
 To prove this we can proceed in a very similar manner to the proof of \cref{prop:gelfand_approximable}.
 Consider $B_\epsilon(F)$ as in that proof and construct $\zeta$ and $G$ in the same way. We then define a map $\psi\colon F \to \R^X$
 such that $\zeta(f) = f^2 \times \psi(f)$ by setting $\psi(f)(x) = 0$ when $\abs{f(x)} < \tfrac{1}{2}\epsilon$ and $\psi(f)(x) = (f(x)^2)^{-1} \times \zeta(f)(x)$ when $\abs{f(x)} > \tfrac{1}{4}\epsilon$.
 Here $\psi(f)$ plays the role of the $k$ required to deduce the result.
\end{proof}

\subsection{Unusual examples}

Let us end with some examples which are rather unlike the ones we have seen.

\begin{example}
 Consider the locale $\lowerRealsNonneg$ of nonnegative lower reals --- this is given by the theory of (possibly empty) lower Dedekind cuts on the nonnegative rationals. This is a localic semiring with the usual notions of addition and multiplication.
 It is not hard to show this is overt and approximable and the quantic spectrum is $\Omega$ --- a single point.
 If a discrete semiring $R$ has a quantic spectrum of $\Omega$, then it is a `Heyting semi-field'
 --- that is, a `local semiring' in which an element is not invertible if and only if it is zero.
 However, even classically $\lowerRealsNonneg$ is not a semi-field, since it contains the point $\infty$, which has no inverse. It is, however, the frame of opens of the non-sober topological space obtained by
 equipping $[0, \infty)$ with the topology of lower semicontinuity. This is a `paratopological semi-field' --- that is, the subspace $(0,\infty)$ of nonzero elements is a topological monoid under multiplication
 for which every element has an inverse.
\end{example}
\begin{example}
 Consider the Sierpiński locale $\Srpnsk$ with the \emph{reverse} of its usual distributive lattice structure. This is overt and approximable,
 but its frame of radical ideals is isomorphic to the trivial frame and so its spectrum is empty, despite the semiring being nontrivial.
 
 This example suggests the possibility of a `finer' spectrum than the one described here. Indeed, for a spectrum of rings it is natural to consider ideals, but for more general semirings
 it does not seem too surprising that this might fail to capture some important information. This was not a problem for continuous frames with the Scott topology, where the topology, algebraic and order-theoretic
 properties work in concert, but perhaps this is less true in the general case. We do believe that our spectrum still gives some useful information even for general semirings,
 but it could be interesting to consider a richer notion of semiring spectrum in future.
\end{example}

It appears that the spectrum can even sometimes fail to exist at all.

The forgetful functor from $\Frm$ to $\Dcpo$ has a left adjoint, which induces a comonad on $\Frm$ and hence a monad $\doublePow$ on $\Loc$.
In \cite{VickersTownsendDoublePowerlocale} it is shown that $\doublePow$ sends a locale $X$ to the double exponential $\Srpnsk^{\Srpnsk^X}$,
even in the case that $X$ is not exponentiable, so long as the intermediate exponential is taken in $\Set^{\Loc\op}$.
We have shown that if $X$ is an exponentiable locale, then $X \cong \Spec(\Srpnsk^X)$. This suggests the following conjecture.

\begin{conjecture}
There is a natural isomorphism $\OPAI_{\doublePow X} \cong \Hom_\Loc((-) \times X, \Srpnsk)$.
\end{conjecture}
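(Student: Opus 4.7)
The plan is to combine the interpretation of $\OPAI_{\doublePow X}(Y)$ in terms of lattice homomorphisms with the universal property of $\doublePow X$ as the free $\doublePow$-algebra on $X$, producing a natural bijection between $\OPAI_{\doublePow X}(Y)$ and $\Hom_\Loc(Y \times X, \Srpnsk)$.

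First, I would unpack $\OPAI_{\doublePow X}(Y)$. An element is an open $u \in \O Y \oplus \O \doublePow X \cong \O(Y \times \doublePow X)$ subject to the four defining conditions. Since the semiring structure on $\doublePow X$ is inherited pointwise from $\Srpnsk$ --- with $\vee$ as addition and $\wedge$ as multiplication, and the evident units --- these conditions state precisely that the associated locale map $u \colon Y \times \doublePow X \to \Srpnsk$ is, fibrewise in the $\doublePow X$ variable, a distributive lattice homomorphism. Transposing via the product-hom adjunction in $\Set^{\Loc\op}$, such opens are in natural bijection with lattice-preserving morphisms $\doublePow X \to \Srpnsk^Y$ of presheaves on $\Loc$.

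Second, I would exploit that $\Srpnsk$ carries a canonical $\doublePow$-algebra structure, coming from the fact that $\Omega$ is the free frame on the singleton dcpo and hence a $UF$-coalgebra in $\Frm$, dually a $\doublePow$-algebra in $\Loc$. This algebra structure extends pointwise to $\Srpnsk^Y$, and a direct check --- comparing the structure map with the lattice operations on $\Omega$ --- should show that a morphism into $\Srpnsk^Y$ is a $\doublePow$-algebra homomorphism exactly when it preserves the distributive lattice structure. The lattice-preserving morphisms $\doublePow X \to \Srpnsk^Y$ are then in bijection with $\doublePow$-algebra homomorphisms out of the free algebra $\doublePow X$, which by the standard free--forgetful adjunction correspond to morphisms $X \to \Srpnsk^Y$ in $\Set^{\Loc\op}$, i.e., to elements of $\Hom_\Loc(Y \times X, \Srpnsk)$ by Yoneda. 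Concretely, the composite forward map sends $u$ to its pullback along $\id_Y \times \eta_X$, where $\eta_X \colon X \to \doublePow X$ is the unit of the monad, and naturality in $Y$ is immediate from each intermediate bijection.

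The main obstacle, I expect, is the verification in step two that $\doublePow$-algebra homomorphisms into $\Srpnsk^Y$ coincide with the lattice-preserving morphisms of step one. This requires an explicit analysis of the canonical $\doublePow$-algebra structure on $\Srpnsk$ --- tracing through the free-frame-on-a-dcpo comonad on $\Frm$ --- and checking that its associated notion of homomorphism reduces to exactly the four lattice axioms encoded in $\OPAI$. A secondary subtlety is that all of this must be carried out in $\Set^{\Loc\op}$ when $X$ and $Y$ are not exponentiable, so the free-algebra and currying adjunctions must be interpreted presheaf-theoretically rather than as honest $\Loc$-adjunctions; this is precisely the setting in which $\doublePow$ is defined via \cite{VickersTownsendDoublePowerlocale}, so the infrastructure is available, but the bookkeeping requires care.
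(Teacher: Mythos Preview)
The statement you are trying to prove is labelled a \emph{Conjecture} in the paper, and the paper does not offer any proof of it. It is presented as an open problem, with the remark that a positive answer would imply that $\Spec(\doublePow X)$ fails to exist whenever $X$ is not locally compact. There is therefore no proof in the paper to compare your proposal against.

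As for the proposal itself: the overall strategy is reasonable and is essentially the expected line of attack --- reinterpret the $\OPAI$ conditions as saying that the transpose $\doublePow X \to \Srpnsk^Y$ is a map of localic distributive lattices, and then use freeness of $\doublePow X$ to reduce to maps out of $X$. You correctly identify the main obstacle, namely the claim that $\doublePow$-algebra homomorphisms into $\Srpnsk^Y$ coincide with distributive-lattice-preserving maps. This is genuinely the hard step and is not established by anything you have written. The $\doublePow$-algebra structure encodes infinitary information (directed joins, in the guise of the free-frame-on-a-dcpo comonad), whereas the four $\OPAI$ axioms are purely finitary. That these agree for maps out of a \emph{free} $\doublePow$-algebra is plausible --- morally because $\doublePow X$ should be generated by the image of $X$ under the algebra operations --- but it is not automatic, and making it precise in the presheaf setting (where $\Srpnsk^Y$ need not be representable) is exactly where a proof of the conjecture would have to do real work. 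Your proposal is thus a sensible outline, but it does not close the gap the paper leaves open.
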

In particular, this would imply that $\Spec(\doublePow X)$ fails to exist whenever $X$ is not locally compact.

\hypersetup{bookmarksdepth=1}
\subsection*{Acknowledgements}

I would like to thank Peter Faul for discussions we had on the presentation of this paper.

\bibliographystyle{abbrv}
\bibliography{references}

\end{document}